\newcommand{\ECapra}{E-\capra}
\title{Capra-Convexity, Convex Factorization and\\ Variational Formulations for the $l_0$ Pseudonorm}
\author{Jean-Philippe Chancelier 
  and Michel De Lara %\footnote{michel.delara@enpc.fr}
  \\ CERMICS, Ecole des Ponts, Marne-la-Vall\'ee, France}
\begin{document}

\maketitle

\begin{abstract}
  The so-called l0~pseudonorm, or cardinality function, counts the number of nonzero components of a vector.
  In this paper, we analyze the l0~pseudonorm
  by means of so-called Capra (constant along primal rays) conjugacies, for which the underlying
  source norm and its dual norm are both orthant-strictly monotonic
  (a notion that we formally introduce and that encompasses the lp norms, but
  for the extreme ones).
  We obtain three main results.
  First, we show that the l0~pseudonorm is equal to its
  Capra-biconjugate, that is, is a Capra-convex function.
  Second, we deduce an unexpected consequence, that we call convex
  factorization: the l0~pseudonorm coincides,  
  on the unit sphere of the source norm, with a proper convex lower semicontinuous
  function. 
  Third, we establish a variational formulation for the l0~pseudonorm
  by means of generalized top-k dual~norms and k-support dual~norms
  (that we formally introduce).
\end{abstract}

{{\bf Key words}: l0~pseudonorm, orthant-strictly monotonic norm,
  Fenchel-Moreau conjugacy,
  generalized k-support dual~norm, sparse optimization.}

{{\bf AMS classification}: 46N10, 49N15, 46B99, 52A41, 90C46}

% 46N10   	Applications in optimization, convex analysis, mathematical programming, economics
% 49N15   	Duality theory
% 46B99         Normed linear spaces and Banach spaces
% 52A41   	Convex functions and convex programs 
% 90C46   	Optimality conditions, duality 

\section{Introduction}

The \emph{counting function}, also called \emph{cardinality function}
or \emph{\lzeropseudonorm}, 
counts the number of nonzero components of a vector in~$\RR^d$.
It is used in sparse optimization, either as objective function or in the
constraints, to obtain solutions with few nonzero entries.
However, the mathematical expression of the \lzeropseudonorm\
makes it difficult to be handled as such in optimization problems.
This is why most of the literature on sparse optimization
resorts to \emph{surrogate} problems, 
obtained either from \emph{lower approximations} for the \lzeropseudonorm,
or from \emph{alternative} sparsity-inducing terms
(especially suitable norms).
The literature on sparsity-inducing norms is huge,
and we just point out a very succint part of it.
We refer the reader to \cite{Nikolova:2016} that provides a brief tour of the
literature dealing with least squares minimization constrained by $k$-sparsity,
and to \cite{Hiriart-Urruty-Le:2013} for a survey of the rank
function of a matrix, that shares many properties with the
\lzeropseudonorm.
We refer the reader to \cite{Argyriou-Foygel-Srebro:2012} for the support norm,
to \cite{Tono-Takeda-Gotoh:2017} (and references therein) for top norms,
and to \cite{McDonald-Pontil-Stamos:2016} for generalizations.

Our approach to tackle the \lzeropseudonorm\ uses so-called \capra\
(constant along primal rays) conjugacies,
introduced in~\cite{Chancelier-DeLara:2020_CAPRA_OPTIMIZATION}.
More precisely, in~\cite{Chancelier-DeLara:2020_CAPRA_OPTIMIZATION},
we presented the class of couplings~\capra\
(dependending on an underlying source norm)
and we established expressions for \capra-conjugates and biconjugates,
and \capra-subdifferentials of nondecreasing functions of the \lzeropseudonorm.
In~\cite{Chancelier-DeLara:2021_ECAPRA_JCA},
we introduced the coupling~\ECapra\ related to the Euclidean norm
and we showed that the \lzeropseudonorm\ is \ECapra-convex and 
displays hidden convexity in the following sense.
The \lzeropseudonorm\ satisfies a \emph{convex factorization property}:
it can be written as the composition of a proper convex lower semi continuous (lsc) function
with the normalization mapping that maps any nonzero vector 
onto the Euclidean unit sphere,
hence it coincides with a proper convex lsc function on the Euclidean unit sphere.

In this paper, we go beyond the two above papers in several directions.
We generalize the results
of~\cite{Chancelier-DeLara:2021_ECAPRA_JCA}
by showing that not only the \lzeropseudonorm\
but any nondecreasing function of the \lzeropseudonorm\
is \capra-convex and displays hidden convexity (convex factorization property),
and not only for the Euclidean norm but for a class of norms that encompasses it
(including the $\ell_p$-norms for $p\in ]1,\infty[$).
Moreover, we extend the hidden convexity property to subdifferentials.
Indeed, we show that the \capra-subdifferential
of a nondecreasing function of the \lzeropseudonorm\
coincides, on the unit sphere, with the Rockafellar-Moreau subdifferential of the
associated convex lsc function (in the convex factorization property).
We also add the result that the \capra-subdifferential is a closed convex set.
Whereas, 
in~\cite{Chancelier-DeLara:2020_CAPRA_OPTIMIZATION},
 we obtained \capra-convex lower bounds (inequalities)
for nondecreasing functions of the \lzeropseudonorm,
we now obtain identities.
Whereas we obtained an expression for the \capra-subdifferential
of a nondecreasing function of the \lzeropseudonorm,
we now prove that it is not empty.

The paper is organized as follows.
In Sect.~\ref{CAPRA_convexity_of_the_pseudo_norm_with_SOM_norms},
we provide background on the \lzeropseudonorm\ and on \capra-conjugacies.
We also introduce a new class of orthant-strictly monotonic norms,
as well as sequences of generalized top-$k$ and $k$-support dual~norms.
We show that nondecreasing functions of the \lzeropseudonorm\
are \capra-convex.
In Sect.~\ref{Hidden_convexity_and_variational_formulation_for_the_pseudo_norm},
we show that any nondecreasing function of the \lzeropseudonorm\ coincides, 
when restricted to the unit sphere, with a proper convex lsc function. 
Then, we deduce variational formulations for nondecreasing functions of the \lzeropseudonorm\ 
which involve the sequence of generalized $k$-support dual~norms.
Appendix~\ref{Properties_of_relevant_norms_for_the_lzeropseudonorm} gathers
background on properties of norms that are relevant for the \lzeropseudonorm,
Appendix~\ref{Appendix:Proposition} reproduces
\cite[Proposition~4.5]{Chancelier-DeLara:2020_CAPRA_OPTIMIZATION}
to make easier the reading of proofs, 
and Appendix~\ref{The_Fenchel_conjugacy} gathers background on the Fenchel conjugacy.

\section{\Capra-convexity of the \lzeropseudonorm\ with orthant-strictly monotonic norms}
\label{CAPRA_convexity_of_the_pseudo_norm_with_SOM_norms}

In~\S\ref{The_lzeropseudonorm_and_the_Capra_conjugacy},
we provide background on the \lzeropseudonorm\ and on the family of \capra\
conjugacies (introduced in~\cite{Chancelier-DeLara:2020_CAPRA_OPTIMIZATION}).
Then, in~\S\ref{Relevant_norms_for_the_lzeropseudonorm},
we introduce norms that are especially relevant for the \lzeropseudonorm,
like orthant-strictly monotonic norms.
Finally, in~\S\ref{CAPRA_convexity_of_the_pseudo_norm}, we prove that
the \lzeropseudonorm\ is \capra-convex when the underlying norm
and its dual norm are both orthant-strictly monotonic.

\subsection{Background on the \lzeropseudonorm\ and the \Capra\ conjugacy}
\label{The_lzeropseudonorm_and_the_Capra_conjugacy}

We work on the Euclidean space~$\RR^d$
(where~$d$ is a nonzero integer), equipped with the scalar product 
\( \nscal{\cdot}{\cdot} \) (but not necessarily with the Euclidean norm).
We use the notation \( \ic{j,k}=\na{j, j+1,\ldots,k-1,k} \) for any two 
integers $j$, $k$ such that \( j \leq k \). 

Let $\TripleNorm{\cdot}$ be a norm on~$\RR^d$, 
that we will call the \emph{source norm}.
We denote the unit sphere~$\TripleNormSphere$ and the unit ball~$\TripleNormBall$ 
of the source norm~$\TripleNorm{\cdot}$ by
\begin{equation}
  \TripleNormSphere= 
  \defset{\primal \in \RR^d}{\TripleNorm{\primal} = 1} 
  \eqsepv
  \TripleNormBall = 
  \defset{\primal \in \RR^d}{\TripleNorm{\primal} \leq 1} 
  \eqfinp
  \label{eq:triplenorm_unit_sphere}
\end{equation}

For any vector \( \primal \in \RR^d \), 
\(  \Support{\primal} = \bset{ j \in \ic{1,d} }{\primal_j \not= 0 } 
\subset \ic{1,d} \) is the support of~\( \primal \).
The so-called \emph{\lzeropseudonorm} is the function
\( \lzero : \RR^d \to \ic{0,d} \)
defined by 
\begin{equation}
  \lzero\np{\primal} = \cardinal{ \Support{\primal} }
  = \textrm{number of nonzero components of } \primal
  \eqsepv \forall \primal \in \RR^d
  \eqfinv
  \label{eq:pseudo_norm_l0}  
\end{equation}
where $\cardinal{K}$ denotes the cardinality of 
a subset \( K \subset \ic{1,d} \). 
The \lzeropseudonorm\ shares three out of the four axioms of a norm:
nonnegativity, positivity except for \( \primal =0 \), subadditivity.
The axiom of 1-homogeneity does not hold true. 
By contrast, the \lzeropseudonorm\ is 0-homogeneous:
\begin{equation}
  \lzero\np{\rho\primal} = \lzero\np{\primal} 
  \eqsepv \forall \rho \in \RR\setminus\{0\}
  \eqsepv \forall \primal \in \RR^d
  \eqfinp
  \label{eq:lzeropseudonorm_is_0-homogeneous}
\end{equation}

Following~\cite{Chancelier-DeLara:2020_CAPRA_OPTIMIZATION}, we introduce the coupling~\capra. 

\begin{definition}(\cite[Definition~4.1]{Chancelier-DeLara:2020_CAPRA_OPTIMIZATION})
  The \emph{constant along primal rays coupling}
  $\CouplingCapra: \RR^d\times\RR^d \to \RR$, or \capra,
  between $\RR^d$ and itself, is the function% \footnote{%
    % In the case of the Capra coupling, 
    % the primal and dual space are the same space~$\RR^d$
    % but the Capra coupling is not symmetric in the
    % primal and dual variables.
    % To stress the point, we use the letter~$\primal$ for a primal vector
    % and the letter~$\dual$ for a dual vector. 
    % \label{ft:Capra}}
  \begin{equation}
    \CouplingCapra : \np{\primal,\dual} \in \RR^d\times\RR^d
    \mapsto 
    \begin{cases}
      \frac{ \nscal{\primal}{\dual} }{ \TripleNorm{\primal} }
      \eqsepv
      & \primal \neq 0 \eqfinv
      \\
      0 \eqsepv
      & \text{ else.} 
    \end{cases}
    \label{eq:coupling_CAPRA}
  \end{equation}
\end{definition}
In the case of the Capra coupling, 
    the primal and dual space are the same space~$\RR^d$
    but the Capra coupling is not symmetric in the
    primal and dual variables.
    To stress the point, we use the letter~$\primal$ for a primal vector
    and the letter~$\dual$ for a dual vector. 
We also underline that, in~\eqref{eq:coupling_CAPRA},
the Euclidean scalar product \( \nscal{\primal}{\dual} \)
and the norm term \( \TripleNorm{\primal} \) need not be related, 
that is, the norm~$\TripleNorm{\cdot}$ is not necessarily the Euclidean norm.

The coupling \capra\ has the property of being 
constant along primal rays, hence the acronym~\capra\
(Constant Along Primal RAys).
We introduce 
the primal \emph{normalization mapping}
$\normalized : \RR^d \to  \TripleNormSphere \cup \{0\}$,
from $\RR^d$ towards the unit sphere \( \TripleNormSphere \)
united with $\{0\}$, 
as follows:
\begin{equation}
  \normalized : \primal \in \RR^d
  \mapsto
  \begin{cases}
    \frac{\primal}{\TripleNorm{\primal}}
    & \primal \neq 0 \eqfinv
    \\
    0 \eqsepv
    & \text{ else.} 
  \end{cases}  
  \label{eq:normalization_mapping}
\end{equation}

Now, we introduce notions and notation from generalized convexity
\cite{Singer:1997,Rubinov:2000,Martinez-Legaz:2005}.
As we manipulate functions with values 
in~$\barRR = [-\infty,+\infty] $,
we adopt the Moreau \emph{lower and upper additions} \cite{Moreau:1970} 
that extend the usual addition with 
\( \np{+\infty} \LowPlus \np{-\infty} = \np{-\infty} \LowPlus \np{+\infty} =
-\infty \) or with
\( \np{+\infty} \UppPlus \np{-\infty} = \np{-\infty} \UppPlus \np{+\infty} =
+\infty \).
\begin{subequations}
  The \emph{$\CouplingCapra$-Fenchel-Moreau conjugate} of a 
  function \( \fonctionprimal : \RR^d \to \barRR \), 
  with respect to the coupling~$\CouplingCapra$, is
  the function \( \SFM{\fonctionprimal}{\CouplingCapra} : \RR^d  \to \barRR \) 
  defined by
  \begin{equation}
    \SFM{\fonctionprimal}{\CouplingCapra}\np{\dual} = 
    \sup_{\primal \in \RR^d} \Bp{ \CouplingCapra\np{\primal,\dual} 
      \LowPlus \bp{ -\fonctionprimal\np{\primal} } } 
    \eqsepv \forall \dual \in \RR^d
    \eqfinp
    \label{eq:Fenchel-Moreau_conjugate}
  \end{equation}
  The \emph{$\CouplingCapra$-Fenchel-Moreau biconjugate} of a 
  function \( \fonctionprimal : \RR^d  \to \barRR \), 
  with respect to the coupling~$\CouplingCapra$, is
  the function \( \SFMbi{\fonctionprimal}{\CouplingCapra} : \RR^d \to \barRR \) 
  defined by
  \begin{equation}
    \SFMbi{\fonctionprimal}{\CouplingCapra}\np{\primal} = 
    \sup_{ \dual \in \RR^d } \Bp{ \CouplingCapra\np{\primal,\dual} 
      \LowPlus \bp{ -\SFM{\fonctionprimal}{\CouplingCapra}\np{\dual} } } 
    \eqsepv \forall \primal \in \RR^d 
    \eqfinp
    \label{eq:Fenchel-Moreau_biconjugate}
  \end{equation}
  The biconjugate of a 
  function \( \fonctionprimal : \RR^d  \to \barRR \) satisfies the inequality
  \begin{equation}
    \SFMbi{\fonctionprimal}{\CouplingCapra}\np{\primal}
    \leq \fonctionprimal\np{\primal}
    \eqsepv \forall \primal \in \RR^d 
    \eqfinp
    \label{eq:galois-cor}
  \end{equation}
\end{subequations}
When the coupling~$\CouplingCapra$ is replaced by
the Euclidean scalar product \( \nscal{\cdot}{\cdot} \),
we recover well-known expressions of the Fenchel conjugacy
(see Appendix~\ref{The_Fenchel_conjugacy}).

\subsection{Relevant norms for the \lzeropseudonorm}
\label{Relevant_norms_for_the_lzeropseudonorm}

In~\S\ref{Restriction_norms_and_dual_coordinate-k-norms},
we recall the notions of restriction norms and
of generalized coordinate-$k$ and dual coordinate-$k$ norms.
In~\S\ref{Definitions_of_generalized_top-k_and_k-support_dual_norms},
we introduce two new families of norms, the generalized top-$k$ and $k$-support dual~norms.
Finally, in~\S\ref{Orthant-strictly_monotonic_norms}, we define
a new class of orthant-strictly monotonic norms.

\subsubsection{Restriction norms, generalized coordinate-$k$ norms, dual coordinate-$k$ norms}
\label{Restriction_norms_and_dual_coordinate-k-norms}

For any subset \( K \subset\ic{1,d} \),
we define the subspace
\begin{equation}
  \FlatRR_{K} = 
  \bset{ \primal \in \RR^d }{ \primal_j=0 \eqsepv \forall j \not\in K } 
  \subset \RR^d 
  \label{eq:FlatRR}
\end{equation}
with \( \FlatRR_{\emptyset}=\{0\} \),
and then three norms on the subspace~\( \FlatRR_{K} \) of~\( \RR^d \),
as follows.
\begin{itemize}
\item 
  The \emph{$K$-restriction norm} \( \TripleNorm{\cdot}_{K} \)
  is defined by  \( \TripleNorm{\primal}_{K} = \TripleNorm{\primal} \),
  for any \( \primal \in \FlatRR_{K} \).
\item 
  The $\StarSet{K}$-norm
  \( \TripleNorm{\cdot}_{\star,K} \) is 
  the norm \( \bp{\TripleNorm{\cdot}_{\star}}_{K} \),
  given by the restriction to the subspace~\( \FlatRR_{K} \) of
  the dual norm~$\TripleNormDual{\cdot}$ (first dual, as recalled in
  definition~\eqref{eq:dual_norm} of a dual norm, then restriction),
\item 
  The $\SetStar{K}$-norm
  \( \TripleNorm{\cdot}_{K,\star} \) is 
  the norm \( \bp{\TripleNorm{\cdot}_{K}}_{\star} \),
  given by the dual norm (on the subspace~\( \FlatRR_{K} \))
  of the restriction norm~\( \TripleNorm{\cdot}_{K} \) 
  to the subspace~\( \FlatRR_{K} \) (first restriction, then dual).
\end{itemize}
For any \( \primal \in \RR^d \) and subset \( K \subset \ic{1,d} \), 
we denote by
\( \primal_K \in \FlatRR_{K} \subset \RR^d \) the vector which coincides with~\( \primal \),
except for the components outside of~$K$ that vanish
(this definition is valid for \( K =\emptyset \), giving \( \primal_\emptyset=0 \in
\FlatRR_{\emptyset}=\na{0} \)). 

\begin{definition}(\cite[Definition~3.2]{Chancelier-DeLara:2020_CAPRA_OPTIMIZATION})
  \label{de:coordinate_norm}
  For \( k \in \ic{1,d} \), the expression
  (where the notation \( \sup_{\cardinal{K} \leq k} \) is a shorthand for 
    \( \sup_{ { K \subset \ic{1,d}, \cardinal{K} \leq k}} \)) 
  \begin{equation}
    \CoordinateNormDual{\TripleNorm{\dual}}{k}
    =
    \sup_{\cardinal{K} \leq k} \TripleNorm{\dual_K}_{K,\star} 
    \eqsepv \forall \dual \in \RR^d 
    \label{eq:dual_coordinate_norm_definition}
  \end{equation}
  defines a norm on~$\RR^d$, called the \emph{generalized dual coordinate-$k$ norm} 
  \( \CoordinateNormDual{\TripleNorm{\cdot}}{k} \).
  Its dual norm is the \emph{generalized coordinate-$k$ norm}, denoted by
  \( \CoordinateNorm{\TripleNorm{\cdot}}{k} \).
\end{definition}
We denote the unit sphere~\( \CoordinateNormDual{\TripleNormSphere}{k} \) 
and the unit ball ~\( \CoordinateNormDual{\TripleNormBall}{k} \) by:
\( \forall k\in\ic{1,d} \),
\begin{equation}
  \CoordinateNormDual{\TripleNormSphere}{k} = 
  \defset{\dual \in \RR^d}{\CoordinateNormDual{\TripleNorm{\dual}}{k} = 1} 
  \eqsepv 
  \CoordinateNormDual{\TripleNormBall}{k} = 
  \defset{\dual \in \RR^d}{\CoordinateNormDual{\TripleNorm{\dual}}{k} \leq 1} 
  % \eqsepv \forall k\in\ic{1,d}
  \eqfinp
  \label{eq:dual_coordinate_norm_unit_sphere_ball}
\end{equation}
We give examples of generalized coordinate-$k$ and dual coordinate-$k$ norms
in \cite[Table~1]{Chancelier-DeLara:2020_CAPRA_OPTIMIZATION}.

\subsubsection{Generalized top-$k$ and $k$-support dual~norms}
\label{Definitions_of_generalized_top-k_and_k-support_dual_norms}

We introduce two new families of norms,
that we call generalized top-$k$ and $k$-support dual~norms.

\begin{definition}  
  \label{de:top_dual_norm}
  For \( k \in \ic{1,d} \), the expression
  \begin{equation}
    \TopDualNorm{\TripleNorm{\dual}}{k}
    =
    \sup_{\cardinal{K} \leq k} \TripleNormDual{\dual_K} 
    =
    \sup_{\cardinal{K} \leq k} \TripleNorm{\dual_K}_{\star,K}
    \eqsepv \forall \dual \in \RR^d 
    \label{eq:top_dual_norm}
  \end{equation}
  defines a norm on~$\RR^d$, called the 
  \emph{generalized top-$k$ dual~norm}
  (associated with the source norm~$\TripleNorm{\cdot}$).
  Its dual norm
  \begin{equation}
    \SupportDualNorm{\TripleNorm{\cdot}}{k} 
    = \bp{ \TopDualNorm{\TripleNorm{\cdot}}{k} }_{\star}
    \eqsepv \forall k\in\ic{1,d}
    \label{eq:support_dual_norm}
  \end{equation}
  is called \emph{generalized $k$-support dual~norm}.
  It has unit sphere~\( \SupportDualNorm{\TripleNormSphere}{k} \)
  and unit ball~\( \SupportDualNorm{\TripleNormBall}{k} \)
  given by: \(  \forall k\in\ic{1,d} \), 
  \begin{equation}
    \SupportDualNorm{\TripleNormSphere}{k}
    =
    \defset{\primal \in \RR^d}{\SupportDualNorm{\TripleNorm{\primal}}{k} = 1} 
    \eqsepv
    \SupportDualNorm{\TripleNormBall}{k}
    =
    \defset{\primal \in \RR^d}{\SupportDualNorm{\TripleNorm{\primal}}{k} \leq 1} 
    % \eqsepv \forall k\in\ic{1,d}
    \eqfinp
    \label{eq:generalized_k-support_norm_unit}
  \end{equation}
\end{definition}
We use the symbol~$\star$ in the superscript in Equation~\eqref{eq:support_dual_norm}
to indicate that the generalized
    $k$-support dual~norm \( \SupportDualNorm{\TripleNorm{\cdot}}{k} \)
    is a dual norm.
    To stress the point, we use the letter~$\primal$ for a primal vector,
    like in~\( \SupportDualNorm{\TripleNorm{\primal}}{k} \),
    and the letter~$\dual$ for a dual vector,
    like in~\( \TopDualNorm{\TripleNorm{\dual}}{k} \).
    We also adopt the conventions
    \( \TopDualNorm{\TripleNorm{\cdot}}{0} = 0 \)
    and
    \( \SupportDualNorm{\TripleNorm{\cdot}}{0} = 0 \),
    although these are not norms but seminorms.

We now give examples of generalized top-$k$ and $k$-support dual~norms
in the case of $\ell_p$ source norm.
We recall that the $\ell_p$-norms~$\norm{\cdot}_{p}$ on the space~\( \RR^d \) are
defined by \( \norm{\primal}_{p} = \bp{\sum_{i=1}^d |\primal_i|^p}^{\frac{1}{p}}\) 
for $p\in [1,\infty[$, and by
\(\norm{\primal}_{\infty} = \sup_{i\in\ic{1,d}} |\primal_i|\).
It is well-known that the dual norm of the norm~$\norm{\cdot}_{p}$
is the $\ell_q$-norm~$\norm{\cdot}_{q}$, where $q$ is such that \(1/p + 1/q=1\) 
(with the extreme cases $q=\infty$ when $p=1$, and $q=1$ when $p=\infty$). 

We start with a Lemma, whose proof is easy. 
For any \( \dual \in \RR^d \), we denote by \( \module{\dual}
=\np{\module{\dual_1},\ldots,\module{\dual_d}} \)
the vector of~$ \RR^{d}$ with components $|\dual_i|$, $i\in\ic{1,d}$. 
Letting \( \dual\in\RR^d \) and
$\nu$ be a permutation of \( \ic{1,d} \) such that
\( \module{ \dual_{\nu(1)} } \geq \module{ \dual_{\nu(2)} } 
\geq \cdots \geq \module{ \dual_{\nu(d)} } \),
we denote \( \dual^{\downarrow} = \bp{ \module{ \dual_{\nu(1)} }, 
  \module{ \dual_{\nu(2)} }, \ldots, \module{ \dual_{\nu(d)} } } \).
\begin{lemma}
  \label{lem:permutation_invariant_monotonic} 
  Let $\TripleNorm{\cdot}$ be a norm on~$\RR^d$.
  Then, if the norm \( \TripleNorm{\cdot} \) is
  permutation invariant and monotonic 
  --- that is, for any  $\primal$, $\primal'$ in~$ \RR^{d}$, we have 
  \(
  |\primal| \le |\primal'| \Rightarrow 
  \TripleNorm{\primal} \le \TripleNorm{\primal'}
  \),
  where $|\primal| \leq |\primal'|$ means 
  $|\primal_i| \leq |\primal'_i|$ for all $i\in\ic{1,d}$ ---  
  we have that $\TopDualNorm{\TripleNorm{\dual}}{k} = 
  \TripleNormDual{ \dual^{\downarrow}_{ \ic{1,k} }}  $,
  where $\dual^{\downarrow}_{ \ic{1,k} } \in \RR^d$ is given by
  $\np{\dual^{\downarrow}}_{\ic{1,k}}$,
  for all $\dual \in \RR^d$.
\end{lemma}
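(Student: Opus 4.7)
The plan is to show both inequalities $\TopDualNorm{\TripleNorm{\dual}}{k} \leq \TripleNormDual{\dual^{\downarrow}_{\ic{1,k}}}$ and $\TopDualNorm{\TripleNorm{\dual}}{k} \geq \TripleNormDual{\dual^{\downarrow}_{\ic{1,k}}}$, leveraging the fact that permutation invariance and monotonicity pass to the dual norm.

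First, I would establish the preliminary observation that $\TripleNormDual{\cdot}$ inherits both properties from $\TripleNorm{\cdot}$. Permutation invariance of $\TripleNormDual{\cdot}$ follows by changing variables in the supremum defining the dual norm (any permutation is an involution on the unit ball of $\TripleNorm{\cdot}$). Monotonicity of $\TripleNormDual{\cdot}$ is the classical fact that the dual of a monotonic (absolute) norm is monotonic: if $|\dual| \leq |\dual'|$, then for any $\primal$ with $\TripleNorm{\primal} \leq 1$, choosing signs $\epsilon_i \in \{-1,+1\}$ so that $\epsilon_i \primal_i \dual_i = |\primal_i||\dual_i|$, monotonicity and permutation/sign invariance of $\TripleNorm{\cdot}$ yield $\nscal{\primal}{\dual} \leq \nscal{|\primal|}{|\dual|} \leq \nscal{|\primal|}{|\dual'|} \leq \TripleNormDual{\dual'}$.

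Next, for the inequality $\TopDualNorm{\TripleNorm{\dual}}{k} \leq \TripleNormDual{\dual^{\downarrow}_{\ic{1,k}}}$, I would fix an arbitrary subset $K \subset \ic{1,d}$ with $\cardinal{K} \leq k$ and observe that the vector $|\dual_K|$ has at most $k$ nonzero entries, each of which is one of the $|\dual_i|$'s. Hence there exists a permutation $\sigma$ of $\ic{1,d}$ such that the vector $\sigma(|\dual_K|)$ is componentwise dominated by $\dual^{\downarrow}_{\ic{1,k}}$ (place the nonzero entries of $|\dual_K|$ in the first $k$ coordinates). Using permutation invariance followed by monotonicity of $\TripleNormDual{\cdot}$, and the identity $\TripleNormDual{\dual_K} = \TripleNormDual{|\dual_K|}$ (again by sign/permutation invariance, which follows from the general monotone-norm machinery), we get $\TripleNormDual{\dual_K} \leq \TripleNormDual{\dual^{\downarrow}_{\ic{1,k}}}$, and taking the supremum over $K$ gives the desired bound.

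For the reverse inequality, I would simply choose $K = \nu(\ic{1,k})$, i.e., the indices of the $k$ largest components of $|\dual|$. Then $\dual_K$ is, up to a permutation and signs, exactly $\dual^{\downarrow}_{\ic{1,k}}$, and permutation invariance of $\TripleNormDual{\cdot}$ yields $\TripleNormDual{\dual_K} = \TripleNormDual{\dual^{\downarrow}_{\ic{1,k}}}$; this value is attained in the supremum defining $\TopDualNorm{\TripleNorm{\dual}}{k}$. The main obstacle is really the preliminary step: cleanly verifying that monotonicity transfers to the dual norm (and the mildly subtle point that a permutation invariant monotonic norm is in fact an absolute norm, so that $\TripleNormDual{\dual} = \TripleNormDual{|\dual|}$), after which the rest is a routine sup-over-subsets comparison.
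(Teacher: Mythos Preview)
Your argument is correct. The paper does not actually supply a proof of this lemma---it simply introduces it with the phrase ``whose proof is easy'' and moves on---so there is nothing to compare your approach against. Your route (transfer permutation invariance and monotonicity to $\TripleNormDual{\cdot}$, then compare any $\dual_K$ with $|K|\le k$ to $\dual^{\downarrow}_{\ic{1,k}}$ via a permutation and componentwise domination, and exhibit the optimal $K=\nu(\ic{1,k})$ for the reverse bound) is exactly the natural one, and all the steps are sound; in particular, monotonicity alone already forces the norm to be absolute, so your sign manipulations are justified.
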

We first present examples of generalized top-$k$ dual~norms as in~\eqref{eq:top_dual_norm}
(see the second column of Table~\ref{tab:Examples}). 
When the norm~$\TripleNorm{\cdot}$ is the Euclidean norm~$\norm{\cdot}_2$ of~\( \RR^d \),
the generalized top-$k$ dual~norm is known under different names:
the top-$(k,2)$ norm in~\cite[p.~8]{Tono-Takeda-Gotoh:2017},
or the $2$-$k$-symmetric gauge norm \cite{Mirsky:1960}
or the Ky Fan vector norm \cite{Obozinski-Bach:hal-01412385}.
Indeed, in all these cases, the norm of a vector~$\dual$ is obtained
with a subvector of size~$k$ having the~$k$ largest absolute values of the
components, 
because the assumptions of Lemma~\ref{lem:permutation_invariant_monotonic} are satisfied.
More generally, when the norm~$\TripleNorm{\cdot}$ is the $\ell_p$-norm
$\norm{\cdot}_{p}$, for $p\in [1,\infty]$, 
the assumptions of Lemma~\ref{lem:permutation_invariant_monotonic} are also satisfied, as 
$\ell_p$-norms are permutation invariant and monotonic. 
Therefore, we obtain 
that the corresponding generalized top-$k$ dual~norm
\( \TopDualNorm{ \bp{ \norm{\cdot}_{p} } }{k} \) has the expression
\( \TopDualNorm{ \bp{ \norm{\cdot}_{p} } }{k}\np{\dual}
=  \sup_{\cardinal{K} \leq k} {\norm{\dual_K}}_{q} 
= {\norm{ \dual^{\downarrow}_{ \ic{1,k} }}}_{q} \),
for all \( \dual \in \RR^d \), and where \( 1/p+1/q=1 \).
Notice that \( \TopDualNorm{ \bp{ \norm{\cdot}_{p} } }{k} \) is expressed in function
  of~$q$, which can be misleading (this phenomenon is manifest in
  Table~\ref{tab:Examples}). 
When the source norm~$\TripleNorm{\cdot}$ is the $\ell_p$-norm
$\Norm{\cdot}_{p}$, the generalized top-$k$ dual~norm \( \TopDualNorm{\TripleNorm{\cdot}}{k} \)
in~\eqref{eq:top_dual_norm} is called the\emph{\lptopnorm{q}{k}} and is denoted by~\( \LpTopNorm{\cdot}{q}{k} \)
(we invert the indices in the naming convention
  of~\cite[p.~5, p.~8]{Tono-Takeda-Gotoh:2017}, where top-$(k,1)$ and top-$(k,2)$
  were used).
Notice that \( \LpTopNorm{\cdot}{\infty}{k} 
= \norm{\cdot}_{\infty}\) for all~$k\in\ic{1,d}$. 

Now, we turn to examples of generalized $k$-support dual~norms as
in~\eqref{eq:support_dual_norm}
(see the third column of Table~\ref{tab:Examples}).
When the norm~$\TripleNorm{\cdot}$ is the Euclidean norm
\( \norm{\cdot}_2 \) of~\( \RR^d \),
the generalized $k$-support norm is the so-called
$k$-support norm \cite{Argyriou-Foygel-Srebro:2012}. 
More generally, in \cite[Definition 21]{McDonald-Pontil-Stamos:2016},
the authors define the $k$-support $p$-norm or \emph{\lpsupportnorm{p}{k}}
for $p\in [1,\infty]$.
They show, in~\cite[Corollary 22]{McDonald-Pontil-Stamos:2016},
that the dual norm \( \bp{ \TopNorm{ \np{ \norm{\cdot}_{p} } }{k} }_\star \) 
of the above \lptopnorm{p}{k}
is the \lpsupportnorm{q}{k}, where \( 1/p + 1/q = 1 \).
Therefore, the generalized $k$-support dual~norm in~\eqref{eq:support_dual_norm}
is the \lpsupportnorm{p}{k} 
--- denoted by~\( \LpSupportNorm{\cdot}{p}{k} \) ---
when the source norm~$\TripleNorm{\cdot}$ is the $\ell_p$-norm
$\Norm{\cdot}_{p}$, for $p\in [1,\infty]$.
The formula \( \LpSupportNorm{\primal}{\infty}{k} =
\max \na{ \Norm{\primal}_{1} / k , \Norm{\primal}_{\infty} } \)
can be found in~\cite[Exercise IV.1.18, p. 90]{Bhatia:1997}.

\begin{table}
  \centering
  \begin{tabular}{||c||c|c||}
    \hline\hline 
    \small{source norm} \( \TripleNorm{\cdot} \) 
    & \( \TopDualNorm{\TripleNorm{\cdot}}{k} \), $k\in\ic{1,d}$
    & \( \SupportDualNorm{\TripleNorm{\cdot}}{k} \), $k\in\ic{1,d}$
    \\
    \hline\hline 
    \( \Norm{\cdot}_{p} \)
    & \lptopnorm{q}{k} 
    & \lpsupportnorm{p}{k} % $(p,k)$-support norm
    % \\
    % & ($1/p + 1/q =1$) &
    \\
    & \( \LpTopNorm{\dual}{q}{k} \) % 
    & \( \LpSupportNorm{\primal}{p}{k} \) % 
    \\
    & \(\LpTopNorm{\dual}{q}{k}=\bp{ \sum_{l=1}^{k} \module{ \dual_{\nu(l)} }^q }^{\frac{1}{q}} \)
    & no analytic expression % 
    \\
    \hline
    \( \Norm{\cdot}_{1} \) 
    & \lptopnorm{\infty}{k} % top $(k,\infty)$-norm  
    & \lpsupportnorm{1}{k} % $(1,k)$-support norm 
    \\
    & $\ell_{\infty}$-norm 
    & $\ell_{1}$-norm
    \\[1mm]
    & &
    \\
    & \( \LpTopNorm{\dual}{\infty}{k} %= \module{ \dual_{\nu(1)} }
      = \Norm{\dual}_{\infty} \), $\forall k\in\ic{1,d}$
    & \( \LpSupportNorm{\primal}{1}{k} = \Norm{\primal}_{1} \), $\forall k\in\ic{1,d}$
    \\
    \hline 
    \( \Norm{\cdot}_{2} \) 
    & \lptopnorm{2}{k} 
    & \lpsupportnorm{2}{k} 
    \\
    & \( \LpTopNorm{\dual}{2}{k} = \sqrt{ \sum_{l=1}^{k} \module{ \dual_{\nu(l)} }^2 } \)
    & \( \LpSupportNorm{\primal}{2}{k} \)
      no analytic expression
    \\
    & & (computation \cite[Prop. 2.1]{Argyriou-Foygel-Srebro:2012})
    \\[1mm]
    & &
    \\
    & \( \LpTopNorm{\dual}{2}{1}= \Norm{\dual}_{\infty} \)
    & \( \LpSupportNorm{\primal}{2}{1} = \Norm{\primal}_{1} \)
      
    \\
    \hline 
    \( \Norm{\cdot}_{\infty} \)
    & \lptopnorm{1}{k} 
    & \lpsupportnorm{\infty}{k} 
    \\
    & \( \LpTopNorm{\dual}{1}{k} = \sum_{l=1}^{k} \module{ \dual_{\nu(l)} } \)
    & \( \LpSupportNorm{\primal}{\infty}{k} =
      \max \na{ \frac{\Norm{\primal}_{1}}{k} , \Norm{\primal}_{\infty} } \) 
    \\[1mm]
    & &
    \\
    & \( \LpTopNorm{\dual}{1}{1}= \Norm{\dual}_{\infty} \)
    & \( \LpSupportNorm{\primal}{1}{1} = \Norm{\primal}_{1} \)

    \\
    \hline\hline
  \end{tabular}
  \caption{Examples of generalized top-$k$ and $k$-support dual~norms
    generated by the $\ell_p$ source norms 
    \( \TripleNorm{\cdot} = \Norm{\cdot}_{p} \) for $p\in [1,\infty]$,
    where $1/p + 1/q =1$.
    For \( \dual \in \RR^d \), $\nu$ denotes a permutation of \( \{1,\ldots,d\} \) such that
    \( \module{ \dual_{\nu(1)} } \geq \module{ \dual_{\nu(2)} } 
    \geq \cdots \geq \module{ \dual_{\nu(d)} } \).
    \label{tab:Examples}}
\end{table}

\subsubsection{Orthant-monotonic and orthant-strictly monotonic norms}
\label{Orthant-strictly_monotonic_norms}

We recall the definition of orthant-monotonic norms
and we introduce the new definition of orthant-strictly monotonic
norms, that will prove especially relevant for the \lzeropseudonorm.

\begin{definition}    
  A norm \( \TripleNorm{\cdot}\) on the space~\( \RR^d \) is called
  \begin{itemize}
  \item 
    \emph{orthant-monotonic} \cite[Definition~2.6]{Gries-Stoer:1967}
    if, for all 
    $\primal$, $\primal'$ in~$ \RR^{d}$, we have 
    \bp{  \(
      |\primal| \le |\primal'| \) and  
      \( \primal~\circ~\primal' \geq~0 \Rightarrow 
      \TripleNorm{\primal} \le \TripleNorm{\primal'}
      \) },
    where $|\primal| \leq |\primal'|$ means 
    $|\primal_i| \leq |\primal^{'}_i|$ for all $i\in\ic{1,d}$, 
    and     where $\primal~\circ~\primal' =
    \np{ \primal_1 \primal'_1,\ldots, \primal_d \primal'_d}$
    is the Hadamard (entrywise) product, 
  \item 
    \emph{orthant-strictly monotonic}
    % \cite[Definition~\ref{OSM-de:orthant-strictly_monotonic}]{Chancelier-DeLara:2020_OSM}
    if, for all 
    $\primal$, $\primal'$ in~$ \RR^{d}$, we have 
    \bp{  \(
      |\primal| < |\primal'| \) and  
      \( \primal~\circ~\primal' \geq~0 \Rightarrow 
      \TripleNorm{\primal} < \TripleNorm{\primal'}
      \) },
    where \( |\primal| < |\primal'| \) means that 
    $|\primal_i| \le |\primal^{'}_i|$ for all $i\in\ic{1,d}$, 
    and there exists $j \in \ic{1,d}$, such that
    $|\primal_j| < |\primal^{'}_j|$.
  \end{itemize}
  \label{de:orthant-monotonic}
\end{definition}
All the $\ell_p$-norms~$\norm{\cdot}_{p}$ on the space~\( \RR^d \), 
for $p\in [1,\infty[$, are strictly monotonic, 
hence orthant-strictly monotonic. By contrast, 
the $\ell_\infty$-norm~$\norm{\cdot}_{\infty}$ is orthant-monotonic but
not orthant-strictly monotonic.

\subsection{\Capra-convexity of the \lzeropseudonorm}
\label{CAPRA_convexity_of_the_pseudo_norm}

The main result of this section is Theorem~\ref{th:pseudonormlzero_conjugate} which 
states that,
when both the source norm~$\TripleNorm{\cdot}$ and its dual norm~$\TripleNormDual{\cdot}$
are orthant-strictly monotonic, then any nondecreasing function of the
\lzeropseudonorm\ is equal to its \capra-biconjugate, that is, is a \capra-convex function.
This considerably generalizes the result in
\cite[Theorem~3.5]{Chancelier-DeLara:2021_ECAPRA_JCA},
which was established for the Euclidean norm and only for the \lzeropseudonorm. 

The proof of Theorem~\ref{th:pseudonormlzero_conjugate} relies on
Proposition~\ref{pr:nonempty_subdifferential}, %stated and proved after,
which establishes the nonemptiness of a suitable \capra-subdifferential.
In \cite[Equation~(32)]{Chancelier-DeLara:2020_CAPRA_OPTIMIZATION},
we define the \emph{\capra-subdifferential} of 
the function \( \fonctionprimal : \RR^d \to \barRR \) 
at~\( \primal \in  \RR^d \) by 
\begin{equation}
  \subdifferential{\CouplingCapra}{\fonctionprimal}\np{\primal} 
  =
  \bset{ \dual \in \RR^d }{ %
    \SFM{\fonctionprimal}{\CouplingCapra}\np{\dual}=
    \CouplingCapra\np{\primal, \dual} 
    \LowPlus \bp{ -\fonctionprimal\np{\primal} } }
  \eqfinv
  \label{eq:Capra-subdifferential_b}
\end{equation}
where \( \SFM{\fonctionprimal}{\CouplingCapra}\np{\dual} \)
has been defined in~\eqref{eq:Fenchel-Moreau_conjugate}.

\begin{proposition}
  \label{prop:capra_subdiff}
  Let $\fonctionprimal: \RR^{d} \to \barRR$ be any function.
  For all \( \primal \in \RR^d \),
  the \Capra-subdifferential \(
  \subdifferential{\CouplingCapra}{\fonctionprimal}\np{\primal} \)
  is a closed convex set.
\end{proposition}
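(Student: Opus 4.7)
The plan is to exploit the fact that, for any fixed $\primal \in \RR^d$, the function $\dual \mapsto \CouplingCapra(\primal,\dual)$ is \emph{linear} in~$\dual$: this is immediate from~\eqref{eq:coupling_CAPRA} since, when $\primal \neq 0$, it equals $\nscal{\primal}{\dual}/\TripleNorm{\primal}$, and when $\primal = 0$ it is identically zero. From this I would derive that the conjugate $\SFM{\fonctionprimal}{\CouplingCapra}$ defined in~\eqref{eq:Fenchel-Moreau_conjugate} is, as a supremum over $\primal' \in \RR^d$ of the affine (in~$\dual$) functions $\dual \mapsto \CouplingCapra(\primal',\dual) \LowPlus (-\fonctionprimal(\primal'))$, convex and lower semicontinuous in~$\dual$.

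Next, I would rewrite the defining condition~\eqref{eq:Capra-subdifferential_b} as an inequality: since by construction
$ \SFM{\fonctionprimal}{\CouplingCapra}(\dual) \geq \CouplingCapra(\primal,\dual) \LowPlus (-\fonctionprimal(\primal)) $
holds for every $\dual$, membership in $\subdifferential{\CouplingCapra}{\fonctionprimal}(\primal)$ is equivalent to the reverse inequality
$ \SFM{\fonctionprimal}{\CouplingCapra}(\dual) \leq \CouplingCapra(\primal,\dual) \LowPlus (-\fonctionprimal(\primal)) $.
When $\fonctionprimal(\primal) \in \RR$, this reads $\SFM{\fonctionprimal}{\CouplingCapra}(\dual) - \CouplingCapra(\primal,\dual) \leq -\fonctionprimal(\primal)$, which exhibits the subdifferential as a sublevel set of the convex lsc function $\dual \mapsto \SFM{\fonctionprimal}{\CouplingCapra}(\dual) - \CouplingCapra(\primal,\dual)$, hence closed and convex.

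The only subtlety, which is where one must be a bit careful rather than genuinely difficult, lies in the two infinite cases. If $\fonctionprimal(\primal) = +\infty$, then $\CouplingCapra(\primal,\dual) \LowPlus (-\infty) = -\infty$ (since $\CouplingCapra(\primal,\dual)$ is finite), so the subdifferential is either the whole $\RR^d$ (when $\fonctionprimal \equiv +\infty$, in which case $\SFM{\fonctionprimal}{\CouplingCapra} \equiv -\infty$) or empty otherwise; both are closed convex. If $\fonctionprimal(\primal) = -\infty$, then the right-hand side is $+\infty$, and the same holds for $\SFM{\fonctionprimal}{\CouplingCapra}$, so the subdifferential is again all of~$\RR^d$. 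I expect the main (mild) obstacle to be a clean treatment of these Moreau lower-addition boundary cases; the core of the proof is the one-line observation that $\CouplingCapra$ is linear in its second argument, which turns the definition into a standard sublevel-set argument.
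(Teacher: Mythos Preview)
Your proposal is correct and follows essentially the same route as the paper: both reduce the finite case to recognizing the subdifferential as a sublevel set of the convex lsc function $\dual \mapsto \SFM{\fonctionprimal}{\CouplingCapra}(\dual) - \CouplingCapra(\primal,\dual)$, and both dispatch the two infinite cases $\fonctionprimal(\primal)=\pm\infty$ exactly as you do. The only cosmetic difference is that the paper justifies convexity and lower semicontinuity of $\SFM{\fonctionprimal}{\CouplingCapra}$ by citing an external result identifying it as a Fenchel conjugate, whereas you argue it directly as a supremum of affine functions in~$\dual$; your argument is the more self-contained of the two.
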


\begin{proof}
  We prove that \(
  \subdifferential{\CouplingCapra}{\fonctionprimal}\np{\primal} \),
  as in~\eqref{eq:Capra-subdifferential_b},
  is a closed convex set.
  Let $\primal \in \RR^{d}$.

  By definition~\eqref{eq:Fenchel-Moreau_conjugate}
  of \( \SFM{\fonctionprimal}{\CouplingCapra} \),
  the \Capra-subdifferential~\eqref{eq:Capra-subdifferential_b}
  can be written as
  \[
    \subdifferential{\CouplingCapra}{\fonctionprimal}\np{\primal} 
    =
    \bset{ \dual \in \RR^d }{ %
      \CouplingCapra\np{\primal', \dual} 
      -\fonctionprimal\np{\primal'} 
      \leq 
      \CouplingCapra\np{\primal, \dual} 
      -\fonctionprimal\np{\primal} 
      \eqsepv \forall \primal' \in \RR^{d}}
    \eqsepv 
  \]
  where we use the usual addition
  because \( -\infty < \CouplingCapra\np{\primal,\dual} < +\infty \)
  by~\eqref{eq:coupling_CAPRA}.

  As a consequence, when $\fonctionprimal\np{\primal} = - \infty$, we get that
  $\subdifferential{\CouplingCapra}{\fonctionprimal}\np{\primal} = \RR^{d}$,
  which is closed and convex.
  In the case where $\fonctionprimal\np{\primal} = +
  \infty$, we have that 
  $\subdifferential{\CouplingCapra}{\fonctionprimal}\np{\primal} = \emptyset$ if
  $\fonctionprimal$ is not identically $+\infty$, and that
  $\subdifferential{\CouplingCapra}{\fonctionprimal}\np{\primal} = \RR^{d}$
  otherwise; in either cases, the \Capra-subdifferential is closed and
  convex.
  Now, suppose that $\fonctionprimal\np{\primal} \in \RR$.
  By definition~\eqref{eq:Fenchel-Moreau_conjugate}
  of \( \SFM{\fonctionprimal}{\CouplingCapra} \),
  the \Capra-subdifferential~\eqref{eq:Capra-subdifferential_b}
  can be written as
  \(
  \subdifferential{\CouplingCapra}{\fonctionprimal}\np{\primal} 
  =
  \bset{ \dual \in \RR^d }{ %
    \SFM{\fonctionprimal}{\CouplingCapra}\np{\dual} \leq 
    \CouplingCapra\np{\primal, \dual} 
    -\fonctionprimal\np{\primal} }
  \)
  where the function
  \( \SFM{\fonctionprimal}{\CouplingCapra} \) is a Fenchel conjugate
  by~\cite[Equation~(30b)]{Chancelier-DeLara:2020_CAPRA_OPTIMIZATION},
  hence is closed convex (see the background material in
  Appendix~\ref{The_Fenchel_conjugacy}), 
  and the function~\( g_{\primal} : \RR^d\ni\dual \mapsto \CouplingCapra\np{\primal,
    \dual} -\fonctionprimal\np{\primal}  \) is affine.
  As a consequence,
  \(
  \subdifferential{\CouplingCapra}{\fonctionprimal}\np{\primal} 
  =
  \bset{ \dual \in \RR^d }{ %
    \SFM{\fonctionprimal}{\CouplingCapra}\np{\dual}
    -g_{\primal}\np{\dual} \leq 0 }  \) is a closed convex set.
\end{proof}

It follows that the \capra-subdifferential of the \lzeropseudonorm\ is a
(possibly empty) closed convex set
(by contrast, it is shown in \cite[Section~8]{Hiriart-Urruty-Le:2013} that
  all the generalized [Fenchel] subdifferentials [proximal, Fréchet, viscosity,
  limiting, Clarke] of the rank function coincide and define a vector space).
We now provide conditions under which the \capra-subdifferential of any nondecreasing function of the
\lzeropseudonorm\ is not empty.

\begin{proposition}
  Let $\TripleNorm{\cdot}$ be a norm on~$\RR^d$ with
  associated \capra\ coupling~$\CouplingCapra$ as in~\eqref{eq:coupling_CAPRA}. 
  If both the norm $\TripleNorm{\cdot}$ and the dual norm $\TripleNormDual{\cdot}$
  are orthant-strictly monotonic,
  and if \( \varphi : \ic{0,d} \to \RR \) is a nondecreasing function,
  then
  \begin{equation*}
    \subdifferential{\CouplingCapra}{\np{ \varphi \circ \lzero }}\np{\primal} \neq \emptyset
    \eqsepv \forall \primal \in \RR^d 
    \eqfinp 
  \end{equation*}
  More precisely, when $\primal=0$, we have that
  \( \subdifferential{\CouplingCapra}{\np{ \varphi \circ \lzero }}\np{0}
  = \bigcap_{ \LocalIndex\in\ic{1,d} } \bc{ \varphi\np{\LocalIndex} -\varphi\np{0} }
  \CoordinateNormDual{\TripleNormBall}{\LocalIndex} \)
  \( \neq~\emptyset \),
  where the unit ball~\( \CoordinateNormDual{\TripleNormBall}{k} \) is defined
  in~\eqref{eq:dual_coordinate_norm_unit_sphere_ball}.
  When $\primal\not=0$, there exists \( \dual \in \RR^d \) satisfying 
  \( \Support{\dual} = \Support{\primal} \) 
  and \( \nscal{\primal}{\dual} =
  \TripleNorm{\primal} \times \TripleNormDual{\dual} \), and for all such \( \dual \in \RR^d \)
  we have that \( \lambda \dual \in \subdifferential{\CouplingCapra}{\np{ \varphi \circ \lzero }}\np{\primal} \)
  for $\lambda >0$ large enough.
  \label{pr:nonempty_subdifferential}
\end{proposition}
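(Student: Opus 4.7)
The plan is to split on whether $\primal = 0$ or not. For $\primal = 0$, I unwind the definition~\eqref{eq:Capra-subdifferential_b}: $\dual$ lies in $\subdifferential{\CouplingCapra}{(\varphi\circ\lzero)}(0)$ iff $\CouplingCapra(\primal',\dual) \leq \varphi(\lzero(\primal')) - \varphi(0)$ for every $\primal'\in\RR^d$. Stratifying by $k = \lzero(\primal')\in\ic{1,d}$ and using the $0$-homogeneity of $\CouplingCapra$ in the primal variable, this reduces to the family of conditions $\sup_{\lzero(\primal')=k,\;\TripleNorm{\primal'}=1}\nscal{\primal'}{\dual} \leq \varphi(k)-\varphi(0)$. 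Since $\{\lzero = k\}$ is dense on the unit sphere in $\{\lzero \leq k\}$ and $\nscal{\cdot}{\dual}$ is continuous, this supremum equals $\sup_{\cardinal{K}\leq k}\TripleNorm{\dual_K}_{K,\star} = \CoordinateNormDual{\TripleNorm{\dual}}{k}$ by Definition~\ref{de:coordinate_norm}, which gives the announced formula. Since $\varphi$ is nondecreasing, $0$ lies in every factor and the intersection is nonempty.

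For $\primal\neq 0$, set $k = \lzero(\primal)$, $K = \Support{\primal}$, and aim to produce a subgradient of $\TripleNorm{\cdot}$ at~$\primal$ --- that is, a $\dual\neq 0$ satisfying $\nscal{\primal}{\dual} = \TripleNorm{\primal}\TripleNormDual{\dual}$ --- with $\Support{\dual} = K$. Two preliminary observations follow from a \emph{zeroing} trick combined with orthant-strict monotonicity of $\TripleNormDual{\cdot}$. (a)~If some coordinate $l\notin K$ had $\dual_l\neq 0$, replacing $\dual_l$ by $0$ would preserve $\nscal{\primal}{\cdot}$ (because $\primal_l = 0$) while strictly decreasing $\TripleNormDual{\dual}$, violating the dual-norm inequality; hence $\Support{\dual}\subseteq K$. (b)~If some coordinate $l$ had $\primal_l\dual_l < 0$, the same replacement would strictly increase $\nscal{\primal}{\dual}$ and strictly decrease $\TripleNormDual{\dual}$; hence $\primal_l\dual_l\geq 0$ for every $l$, i.e., $\dual$ is sign-compatible with $\primal$. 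Now fix $j_0\in K$ and suppose, for contradiction, that every such $\dual$ has $\dual_{j_0} = 0$. Then $\partial h(0) = \{0\}$ for the convex function $h(t) := \TripleNorm{\primal + t\,e_{j_0}}$, so $0$ minimizes $h$; but orthant-strict monotonicity of $\TripleNorm{\cdot}$, applied within the orthant of $\primal$ for $\module{t} < \module{\primal_{j_0}}$, makes $h$ strictly monotonic through $0$, a contradiction. Hence for every $j\in K$ there is a subgradient $\dual^{j}$ with $\dual^{j}_j\neq 0$. Sign-compatibility from~(b) prevents cancellation in the average $\dual := \cardinal{K}^{-1}\sum_{j\in K}\dual^{j}$, which still realizes the duality equality and has $\Support{\dual} = K$.

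Given such $\dual$, I verify $\lambda\dual \in \subdifferential{\CouplingCapra}{(\varphi\circ\lzero)}(\primal)$ for $\lambda$ large, i.e., $\CouplingCapra(\primal',\lambda\dual) - \varphi(\lzero(\primal')) \leq \lambda\TripleNormDual{\dual} - \varphi(k)$ for every $\primal'$. The inequality is immediate at $\primal' = 0$ and whenever $\lzero(\primal')\geq k$, since $\varphi(k)-\varphi(\lzero(\primal'))\leq 0$ and the dual-norm slack $\TripleNormDual{\dual} - \nscal{\primal'}{\dual}/\TripleNorm{\primal'}$ is nonnegative. For $\primal'$ with $\lzero(\primal') = k' < k$, the fact that $\Support{\dual} = K$ has exactly $k$ elements forces every restriction $\dual_{K'}$ with $\cardinal{K'}\leq k'$ to strictly drop a nonzero coordinate of $\dual$, so orthant-strict monotonicity of $\TripleNormDual{\cdot}$ yields $\CoordinateNormDual{\TripleNorm{\dual}}{k'} < \TripleNormDual{\dual}$; hence the slack is bounded below uniformly in $\primal'$ by $\delta_{k'} := \TripleNormDual{\dual} - \CoordinateNormDual{\TripleNorm{\dual}}{k'} > 0$. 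Since only finitely many $k'<k$ occur, $\lambda \geq \max_{0\leq k' < k}(\varphi(k) - \varphi(k'))/\delta_{k'}$ (with the convention $\delta_0 := \TripleNormDual{\dual}$) closes the estimate.

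The main obstacle is producing a subgradient with \emph{full} support~$K$: Hahn--Banach alone yields only $\Support{\dual}\subseteq K$, and strict convexity of $\TripleNorm{\cdot}$ would be too strong an assumption. The directional-derivative argument based on orthant-strict monotonicity of $\TripleNorm{\cdot}$ is exactly what promotes "some subgradient exists" to "some subgradient is nonzero at any prescribed coordinate of~$K$", and it is precisely why both orthant-strict monotonicity hypotheses are invoked: the one on $\TripleNormDual{\cdot}$ pins $\Support{\dual}$ inside $K$ and enforces sign-compatibility, while the one on $\TripleNorm{\cdot}$ pushes the support out to fill $K$.
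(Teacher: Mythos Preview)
Your proof is correct and follows a genuinely different route from the paper's. The paper leans on results from its companion~\cite{Chancelier-DeLara:2020_CAPRA_OPTIMIZATION}: for $\primal=0$ it quotes the formula for $\subdifferential{\CouplingCapra}{(\varphi\circ\lzero)}(0)$ directly; for $\primal\neq 0$ it invokes the characterization~\eqref{eq:pseudonormlzero_subdifferential} (normal-cone condition plus $\arg\max$ condition) and obtains the full-support aligned dual vector~$\dual$ from Proposition~\ref{pr:orthant-strictly_monotonic}\ref{it:SDC}, whose proof argues that if $J=\Support{\dual}\subsetneq K=\Support{\primal}$ then both $\primal_K$ and $\primal_J$ lie in $\arg\max_{\TripleNorm{w}\leq\TripleNorm{\primal}}\nscal{w}{\dual}$, forcing $\TripleNorm{\primal_J}=\TripleNorm{\primal_K}$, which contradicts Item~\ref{it:SICS}. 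Your construction of the full-support~$\dual$ is different and rather elegant: the one-dimensional section $t\mapsto\TripleNorm{\primal+te_{j_0}}$, combined with the convex chain rule, forces at least one subgradient to be nonzero at each prescribed coordinate, and sign-compatibility (your point~(b)) lets you average without cancellation. Likewise, your verification that $\lambda\dual$ is a \capra-subgradient works straight from the definition~\eqref{eq:Capra-subdifferential_b} and the strict gap $\TripleNormDual{\dual}-\CoordinateNormDual{\TripleNorm{\dual}}{k'}>0$, whereas the paper recasts this as showing $l\in\arg\max_{j}\bc{\CoordinateNormDual{\TripleNorm{\lambda\dual}}{j}-\varphi(j)}$ via the function~$\psi(\lambda)$. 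Your approach is more self-contained (no appeal to the companion paper's Proposition~4.7), at the cost of re-deriving the $\primal=0$ formula and the subdifferential inequality by hand; the paper's approach is shorter because the structural lemmas have already been packaged elsewhere.
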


\begin{proof}
  The proof relies on results established in
  Appendix~\ref{Properties_of_relevant_norms_for_the_lzeropseudonorm}.

  Since the norm $\TripleNorm{\cdot}$ is orthant-strictly monotonic,
  it is orthant-monotonic, so that we have 
  \( \CoordinateNorm{\TripleNorm{\cdot}}{\LocalIndex} 
  =
  \SupportDualNorm{\TripleNorm{\cdot}}{\LocalIndex} \) 
  and
  \(      \CoordinateNormDual{\TripleNorm{\cdot}}{\LocalIndex}
  =
  \TopDualNorm{\TripleNorm{\cdot}}{\LocalIndex} \),
  for \( \LocalIndex\in\ic{0,d} \) 
  by~\eqref{eq:dual_coordinate-k_norm_=_generalized_top-k_norm} in
  Proposition~\ref{pr:dual_coordinate-k_norm_=_generalized_top-k_norm} 
  in Appendix~\ref{Coordinate-k_and_dual_coordinate-k_norms}
  (with the convention that these are the null seminorms in the case \( \LocalIndex=0 \)).
  Therefore, we can translate all the results with 
  generalized top-$k$ and $k$-support dual~norms
  (Definition~\ref{de:top_dual_norm}) instead of 
  coordinate-$k$ and dual coordinate-$k$ norms (Definition~\ref{de:coordinate_norm}).
  \medskip

  When \( \primal=0 \), we have by
  \cite[Equation~(39) in Proposition~4.7]{Chancelier-DeLara:2020_CAPRA_OPTIMIZATION}
  that
  \begin{equation*}
    \subdifferential{\CouplingCapra}{\np{ \varphi \circ \lzero}}\np{0}
    = \bigcap_{ \LocalIndex\in\ic{1,d} } \bc{ \varphi\np{\LocalIndex} \UppPlus \bp{-\varphi\np{0} } } 
    \CoordinateNormDual{\TripleNormBall}{\LocalIndex} 
    = \bigcap_{ \LocalIndex\in\ic{1,d} } \bc{ \varphi\np{\LocalIndex} -\varphi\np{0} }
    \CoordinateNormDual{\TripleNormBall}{\LocalIndex}    \ni 0 
    \eqfinv
    % \label{eq:pseudonormlzero_subdifferential_at0}
  \end{equation*}
  because \( \varphi\np{\LocalIndex} \UppPlus \bp{-\varphi\np{0}}
  = \varphi\np{\LocalIndex} -\varphi\np{0} \geq 0 \)
  since \( \varphi : \ic{0,d} \to \RR \) is a nondecreasing function.
  \medskip

  From now on, we consider $\primal \in \RR^d\setminus\{0\} $ such that 
  $\lzero\np{\primal}=l \in \ic{1,d}$,
  and we will use the following equivalence, established
  in~\cite[Equation~(40) in Proposition~4.7]{Chancelier-DeLara:2020_CAPRA_OPTIMIZATION}
  \begin{equation}
    \dual \in \subdifferential{\CouplingCapra}{\np{ \varphi \circ \lzero}}\np{\primal} 
    \iff 
    \begin{cases}
      \dual \in 
      \NORMAL_{ \CoordinateNorm{\TripleNormBall}{l} }
      \np{\frac{ \primal }{ \CoordinateNorm{ \TripleNorm{\primal} }{l} } }
      \\
      \mtext{and } 
      l \in \argmax_{\LocalIndex\in\ic{0,d}} \bc{ \CoordinateNormDual{\TripleNorm{\dual}}{\LocalIndex}-\varphi\np{\LocalIndex} }
      \eqfinv
    \end{cases}
    \label{eq:pseudonormlzero_subdifferential}
  \end{equation}
  where the normal cone \( \NORMAL_{ \CoordinateNorm{\TripleNormBall}{l} } 
  \np{\frac{ \primal }{ \CoordinateNorm{ \TripleNorm{\primal} }{l} } } \)
  is defined in~\eqref{eq:normal_cone}.

  Since the norm $\TripleNorm{\cdot}$ is orthant-strictly monotonic,
  we know by Item~\ref{it:SDC} in
  Proposition~\ref{pr:orthant-strictly_monotonic}
  (Appendix~\ref{Properties_of_Orthant-strictly_monotonic_norms})
  that there exists a vector \( \dual \in \RR^d \) such that 
  % \( \dual \in \RR^d\setminus\na{0} \) such that 
  \begin{subequations}
    \begin{align}
      L = \Support{\primal} = \Support{\dual} \mtext{ hence }   \lzero\np{\dual}
      &=
        \lzero\np{\primal}=l>1
        \eqfinv 
        \label{eq:conjugate_point_proof_a}
      \\
      \nscal{\primal}{\dual}
      &= 
        \TripleNorm{\primal} \times \TripleNormDual{\dual} 
        \eqfinp
        \label{eq:conjugate_point_proof_b}
    \end{align}
    \label{eq:conjugate_point_proof}
  \end{subequations}
  Since the dual norm $\TripleNormDual{\cdot}$ is orthant-strictly monotonic,
  we know by~\eqref{eq:level_curve_l0_characterization}
  in Proposition~\ref{pr:assumptions_pseudonormlzero_conjugate}
  (Appendix~\ref{Coordinate-k_and_dual_coordinate-k_norms}) that
  \begin{equation}
    \CoordinateNormDual{\TripleNorm{\dual}}{1} < \cdots < % \leq \cdots \leq 
    \CoordinateNormDual{\TripleNorm{\dual}}{l-1}  < 
    \CoordinateNormDual{\TripleNorm{\dual}}{l} =
    \cdots = \CoordinateNormDual{\TripleNorm{\dual}}{d} =\TripleNormDual{\dual}
    \eqfinp
    \label{eq:level_curve_l0_characterization_bis}
  \end{equation}
  We now show that \( \dual \in \subdifferential{\CouplingCapra}{\np{ \varphi \circ \lzero }}\np{\primal} \). 
  \medskip

  First, we are going to establish that 
  \( \dual \in \NORMAL_{ \CoordinateNorm{\TripleNormBall}{l} }
  \np{\frac{ \primal }{ \CoordinateNorm{ \TripleNorm{\primal} }{l} } } \), that
  is, the first of the two conditions in the characterization~\eqref{eq:pseudonormlzero_subdifferential}
  of the subdifferential~\( \subdifferential{\CouplingCapra}{\np{ \varphi \circ \lzero }}\np{\primal} \). 

  On the one hand, because \( \lzero\np{\dual}=l \) and
  by~\eqref{eq:level_curve_l0_characterization_bis},
  we have that \( \TripleNormDual{\dual} =
  \CoordinateNormDual{\TripleNorm{\dual}}{l} \). 
  On the other hand, because \( \lzero\np{\primal}=l \) we have that
  \( \TripleNorm{\primal} = \CoordinateNorm{\TripleNorm{\primal}}{l} \)
  by \cite[Equation~(25a)]{Chancelier-DeLara:2020_CAPRA_OPTIMIZATION}. 
  Hence, from~\eqref{eq:conjugate_point_proof_b},
  we get that \( \nscal{\primal}{\dual} =
  \CoordinateNorm{\TripleNorm{\primal}}{l} 
  \times \CoordinateNormDual{\TripleNorm{\dual}}{l} \),
  from which we obtain 
  \( \dual \in \NORMAL_{ \CoordinateNorm{\TripleNormBall}{l} }
  \np{\frac{ \primal }{ \CoordinateNorm{ \TripleNorm{\primal} }{l} } } \)
  by property~\eqref{eq:couple_TripleNorm-dual_and_normal_cone} of the normal cone
  as \( \primal \neq 0 \).
  To close this part, notice that, for all $\lambda > 0 $,
  we have that 
  \(  \lambda\dual \in 
  \NORMAL_{ \CoordinateNorm{\TripleNormBall}{l} }
  \np{\frac{ \primal }{ \CoordinateNorm{ \TripleNorm{\primal} }{l} } } \), 
  because this last set is a cone. 
  \medskip

  Second, we prove the other of the two conditions
  in the characterization~\eqref{eq:pseudonormlzero_subdifferential}
  of the subdifferential~\( \subdifferential{\CouplingCapra}{\np{ \varphi \circ \lzero }}\np{\primal} \).
  More precisely, we are going to show that, for $\lambda$ large enough,
  \( \CoordinateNormDual{\TripleNorm{\lambda\dual}}{l}-\varphi\np{l} = 
  \sup_{\LocalIndex\in\ic{0,d}} \bc{ \CoordinateNormDual{\TripleNorm{\lambda\dual}}{\LocalIndex}-\varphi\np{\LocalIndex} }
  \).
  For this purpose, we consider the mapping $\psi:\; ]0,+\infty[ \to \RR$ defined by
  \begin{equation*}
    \psi(\lambda) =
    \CoordinateNormDual{\TripleNorm{\lambda\dual}}{l}-\varphi\np{l} -
    \sup_{\LocalIndex\in\ic{0,d}} \bc{ \CoordinateNormDual{\TripleNorm{\lambda\dual}}{\LocalIndex}-\varphi\np{\LocalIndex} }
    \eqsepv \forall \lambda > 0 
    \eqfinv
  \end{equation*}
  and we are going to show that \( \psi(\lambda) =0 \) for $\lambda$ large enough.
  We have 
  \begin{align*}
    \psi(\lambda) 
    &=
      \inf_{\LocalIndex\in\ic{0,d}} \Bp{ \lambda \bp{ \CoordinateNormDual{\TripleNorm{\dual}}{l} -
      \CoordinateNormDual{\TripleNorm{\dual}}{\LocalIndex} } + \varphi\np{\LocalIndex}-\varphi\np{l} }
    \\
    &=
      \inf \left\{ \lambda \CoordinateNormDual{\TripleNorm{\dual}}{l}+\varphi\np{0}-\varphi\np{l} ,
      \inf_{\LocalIndex\in\ic{1,l-1}} \Bp{ \lambda \bp{ \CoordinateNormDual{\TripleNorm{\dual}}{l} -
      \CoordinateNormDual{\TripleNorm{\dual}}{\LocalIndex} } +
      \varphi\np{\LocalIndex}-\varphi\np{l} } , \right.
      \tag{as \( \CoordinateNormDual{\TripleNorm{\dual}}{0}=0 \) by convention}
    \\ 
    & \phantom{\inf\qquad}
      \left.
      \inf_{\LocalIndex\in\ic{l,d}} \Bp{ \lambda \bp{ \CoordinateNormDual{\TripleNorm{\dual}}{l} -
      \CoordinateNormDual{\TripleNorm{\dual}}{\LocalIndex} } + \varphi\np{\LocalIndex}-\varphi\np{l} } \right\}
    \\
    &=
      \inf \left\{ \lambda \CoordinateNormDual{\TripleNorm{\dual}}{l}+\varphi\np{0}-\varphi\np{l} ,
      \inf_{\LocalIndex\in\ic{1,l-1}} \Bp{ \lambda \bp{ \CoordinateNormDual{\TripleNorm{\dual}}{l} -
      \CoordinateNormDual{\TripleNorm{\dual}}{\LocalIndex} } + \varphi\np{\LocalIndex}-\varphi\np{l} } , \right.
    \\ 
    & \phantom{\inf\qquad}
      \left. \inf_{\LocalIndex\in\ic{l,d}} \bp{
      \varphi\np{\LocalIndex}-\varphi\np{l} } \right\}
      \tag{as \( \CoordinateNormDual{\TripleNorm{\dual}}{\LocalIndex}
      = \CoordinateNormDual{\TripleNorm{\dual}}{l} \) for
      $\LocalIndex \geq l$ by~\eqref{eq:level_curve_l0_characterization_bis}}
    \\
    &=
      \inf \Ba{ \lambda \CoordinateNormDual{\TripleNorm{\dual}}{l}+\varphi\np{0}-\varphi\np{l} ,
      \inf_{\LocalIndex\in\ic{1,l-1}} \Bp{ \lambda \bp{ \CoordinateNormDual{\TripleNorm{\dual}}{l} -
      \CoordinateNormDual{\TripleNorm{\dual}}{\LocalIndex} } +
      \varphi\np{\LocalIndex}-\varphi\np{l} } , 0 } 
      \eqfinv
  \end{align*}
  as \(\inf_{\LocalIndex\in\ic{l,d}} \bp{
    \varphi\np{\LocalIndex}-\varphi\np{l} }=0 \)
  because \( \varphi : \ic{0,d} \to \RR \) is a nondecreasing function.
  Let us show that the two first terms in the infimum
  go to $+\infty$ when \( \lambda \to +\infty \).
  The first term \( \lambda \CoordinateNormDual{\TripleNorm{\dual}}{l}+\varphi\np{0}-\varphi\np{l} \) goes to $+\infty$ 
  because, by~\eqref{eq:level_curve_l0_characterization_bis}, 
  we have that \( \CoordinateNormDual{\TripleNorm{\dual}}{l}=\TripleNormDual{\dual}>0 \) 
  as \( \dual \in \RR^d\setminus\{0\} \) since $\lzero\np{\dual}=l \geq 1$. 
  The second term \( \inf_{\LocalIndex\in\ic{1,l-1}} \Big( \lambda \bp{ \CoordinateNormDual{\TripleNorm{\dual}}{l} -
    \CoordinateNormDual{\TripleNorm{\dual}}{\LocalIndex} } \)
  \( +~\varphi\np{\LocalIndex}-\varphi\np{l} \Big) \)
  also goes to $+\infty$ because 
  $\lzero\np{\dual}=l  \geq 1$, so that 
  \( \TripleNormDual{\dual}=\CoordinateNormDual{\TripleNorm{\dual}}{l} > 
  \CoordinateNormDual{\TripleNorm{\dual}}{\LocalIndex} \)
  for $\LocalIndex\in\ic{1,l-1}$
  again by~\eqref{eq:level_curve_l0_characterization_bis}.
  % ,  where we have used Inequalities~\eqref{eq:k_approx_props}.
  Therefore, we deduce that
  % \( \lim_{\lambda \to +\infty}\psi(\lambda) =
  % \inf \Ba{ +\infty , +\infty , 0 } = 0 \), hence 
  \( \psi(\lambda) =0 \) for $\lambda$ large enough,
  and thus 
  \( \CoordinateNormDual{\TripleNorm{\lambda\dual}}{l}-\varphi\np{l} =
  \sup_{\LocalIndex\in\ic{0,d}} \bc{ \CoordinateNormDual{\TripleNorm{\lambda\dual}}{\LocalIndex}-\varphi\np{\LocalIndex} }
  \), that is, 
  \( l \in \argmax_{\LocalIndex\in\ic{0,d}} \bc{
    \CoordinateNormDual{\TripleNorm{\lambda\dual}}{\LocalIndex}-\varphi\np{\LocalIndex} } \).
  \medskip

  Wrapping up the above results, we have shown that, 
  for any vector \( \dual \in \RR^d \) such that 
  \( \Support{\dual} = \Support{\primal} \),
  % that \( \dual~\circ~\primal \geq 0 \),
  and that \( \nscal{\primal}{\dual} =
  \TripleNorm{\primal} \times \TripleNormDual{\dual} \),
  % \( \np{\primal,\dual} \in \RR^d \times \RR^d \) 
  % is strongly \( \TripleNorm{\cdot} \)-dual (as in Definition~\ref{de:SDC}),
  then, for $\lambda >0$ large enough,
  \( \lambda\dual \) satisfies the two conditions 
  in the characterization~\eqref{eq:pseudonormlzero_subdifferential}
  of the subdifferential~\( 
  \subdifferential{\CouplingCapra}{\np{ \varphi \circ \lzero }}\np{\primal} \).
  Hence, we get that \( \lambda\dual \in \subdifferential{\CouplingCapra}{\np{ \varphi \circ \lzero
    }}\np{\primal} \).
  \medskip

  This ends the proof. 
\end{proof}

\begin{theorem}
  \label{th:pseudonormlzero_conjugate}
  Let $\TripleNorm{\cdot}$ be a norm on~$\RR^d$ with associated sequence
  \( \bseqa{\TopDualNorm{\TripleNorm{\cdot}}{\LocalIndex}}{\LocalIndex\in\ic{1,d}} \)
  of generalized top-$k$ dual~norms,
  as in Definition~\ref{de:top_dual_norm}, 
  and with
  associated \capra\ coupling~$\CouplingCapra$ as in~\eqref{eq:coupling_CAPRA}. 

  \noindent  If the norm $\TripleNorm{\cdot}$ is orthant-monotonic, then, for any 
  function \( \varphi : \ic{0,d} \to \RR \), we have that
  (we recall the convention
    that \( \TopDualNorm{\TripleNorm{\cdot}}{0} = 0 \))
  \begin{align}
    \SFM{ \np{ \varphi \circ \lzero } }{\CouplingCapra} %\np{\dual} 
    &=
      \sup_{\LocalIndex\in\ic{0,d}} \Bc{ \TopDualNorm{\TripleNorm{\cdot}}{\LocalIndex}-\varphi\np{\LocalIndex} }  
      \eqfinp
      \label{eq:conjugate_l0norm_TopDualNorm}
      \intertext{If both the norm $\TripleNorm{\cdot}$ and the dual norm $\TripleNormDual{\cdot}$
      are orthant-strictly monotonic, then, for any 
      nondecreasing function \( \varphi : \ic{0,d} \to \RR \), we have that} 
      \SFMbi{ \np{ \varphi \circ \lzero } }{\CouplingCapra} %\np{\primal}
    &= \varphi \circ \lzero 
      \eqfinp
      \label{eq:biconjugate_l0norm_TopDualNorm}
  \end{align}
\end{theorem}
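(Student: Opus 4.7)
My plan is to treat the two formulas sequentially, pulling the formula for the conjugate straight from the companion paper and then leveraging Proposition~\ref{pr:nonempty_subdifferential} for the biconjugate.

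For the conjugate identity~\eqref{eq:conjugate_l0norm_TopDualNorm}, the strategy is to start from the expression of $\SFM{(\varphi\circ\lzero)}{\CouplingCapra}$ established in the reproduction of \cite[Proposition~4.5]{Chancelier-DeLara:2020_CAPRA_OPTIMIZATION} given in Appendix~\ref{Appendix:Proposition}, which is stated in terms of the generalized dual coordinate-$k$ norms $\CoordinateNormDual{\TripleNorm{\cdot}}{\LocalIndex}$. Since the source norm $\TripleNorm{\cdot}$ is assumed orthant-monotonic, Proposition~\ref{pr:dual_coordinate-k_norm_=_generalized_top-k_norm} (Appendix~\ref{Coordinate-k_and_dual_coordinate-k_norms}) gives $\CoordinateNormDual{\TripleNorm{\cdot}}{\LocalIndex}=\TopDualNorm{\TripleNorm{\cdot}}{\LocalIndex}$ for all $\LocalIndex\in\ic{0,d}$ (with the null seminorm convention at $\LocalIndex=0$). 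Substituting this identification termwise in the supremum yields~\eqref{eq:conjugate_l0norm_TopDualNorm}.

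For the biconjugate identity~\eqref{eq:biconjugate_l0norm_TopDualNorm}, I would exploit the general fact that a function coincides with its biconjugate at every point where the subdifferential is nonempty. The Fenchel--Moreau inequality~\eqref{eq:galois-cor} gives $\SFMbi{(\varphi\circ\lzero)}{\CouplingCapra}\leq \varphi\circ\lzero$ unconditionally. For the reverse inequality at a fixed $\primal\in\RR^d$, pick any $\dual\in\subdifferential{\CouplingCapra}{(\varphi\circ\lzero)}(\primal)$, which exists by Proposition~\ref{pr:nonempty_subdifferential} under the orthant-strict-monotonicity hypothesis on both $\TripleNorm{\cdot}$ and $\TripleNormDual{\cdot}$. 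By the defining equality~\eqref{eq:Capra-subdifferential_b} of the \capra-subdifferential, and because $\CouplingCapra(\primal,\dual)\in\RR$ by~\eqref{eq:coupling_CAPRA} while $\varphi\circ\lzero$ is real-valued (so the Moreau additions collapse to the ordinary one), we have
\begin{equation*}
\SFM{(\varphi\circ\lzero)}{\CouplingCapra}(\dual)
= \CouplingCapra(\primal,\dual) - (\varphi\circ\lzero)(\primal),
\end{equation*}
which rearranges to $(\varphi\circ\lzero)(\primal)=\CouplingCapra(\primal,\dual)-\SFM{(\varphi\circ\lzero)}{\CouplingCapra}(\dual)\leq \SFMbi{(\varphi\circ\lzero)}{\CouplingCapra}(\primal)$ by definition~\eqref{eq:Fenchel-Moreau_biconjugate} of the biconjugate. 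Since $\primal$ was arbitrary, the two inequalities combine into the desired equality.

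The main work has already been done upstream in Proposition~\ref{pr:nonempty_subdifferential} (the construction of an explicit dual point via orthant-strict monotonicity together with the strictly increasing behavior of $\LocalIndex\mapsto \CoordinateNormDual{\TripleNorm{\dual}}{\LocalIndex}$ on $\ic{1,\lzero(\dual)}$); so the present theorem becomes a clean two-step corollary. The only point that warrants a brief check is that $\varphi\circ\lzero$ is real-valued, which prevents the Moreau lower and upper additions from introducing any infinite-value pathology when passing between~\eqref{eq:Fenchel-Moreau_conjugate},~\eqref{eq:Fenchel-Moreau_biconjugate} and~\eqref{eq:Capra-subdifferential_b}.
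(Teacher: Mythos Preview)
Your proof is correct and mirrors the paper's argument exactly: use orthant-monotonicity together with Proposition~\ref{pr:dual_coordinate-k_norm_=_generalized_top-k_norm} to convert the known conjugate formula (stated with $\CoordinateNormDual{\TripleNorm{\cdot}}{\LocalIndex}$) into~\eqref{eq:conjugate_l0norm_TopDualNorm}, then combine the inequality~\eqref{eq:galois-cor} with the nonemptiness of the \capra-subdifferential from Proposition~\ref{pr:nonempty_subdifferential} to obtain the reverse inequality and hence~\eqref{eq:biconjugate_l0norm_TopDualNorm}. One small citation slip: the conjugate formula $\SFM{(\varphi\circ\lzero)}{\CouplingCapra}=\sup_{\LocalIndex}\{\CoordinateNormDual{\TripleNorm{\cdot}}{\LocalIndex}-\varphi(\LocalIndex)\}$ is \cite[Proposition~4.4]{Chancelier-DeLara:2020_CAPRA_OPTIMIZATION}, not the Proposition~4.5 reproduced in Appendix~\ref{Appendix:Proposition}, which only records expressions for $\LFMr{\bp{\SFM{(\varphi\circ\lzero)}{\CouplingCapra}}}$.
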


\begin{proof}
  Suppose that the norm $\TripleNorm{\cdot}$ is orthant-monotonic.
  Then, by~\eqref{eq:dual_coordinate-k_norm_=_generalized_top-k_norm} in
  Proposition~\ref{pr:dual_coordinate-k_norm_=_generalized_top-k_norm}
  (Appendix~\ref{Coordinate-k_and_dual_coordinate-k_norms}),
  we get that   
  \( \CoordinateNorm{\TripleNorm{\cdot}}{k} 
  =
  \SupportDualNorm{\TripleNorm{\cdot}}{k} \)
  and
  \( \CoordinateNormDual{\TripleNorm{\cdot}}{k}
  =
  \TopDualNorm{\TripleNorm{\cdot}}{k} \).
  Moreover, it is proved 
  in~\cite[Equation~(33) in Proposition~4.4]{Chancelier-DeLara:2020_CAPRA_OPTIMIZATION}
  that \( \SFM{ \np{ \varphi \circ \lzero } }{\CouplingCapra} = \) \(
  \sup_{\LocalIndex\in\ic{0,d}} \Bc{ \CoordinateNormDual{\TripleNorm{\cdot}}{\LocalIndex}-\varphi\np{\LocalIndex} } \).
  As \( \CoordinateNormDual{\TripleNorm{\cdot}}{\LocalIndex}
  =
  \TopDualNorm{\TripleNorm{\cdot}}{\LocalIndex} \), we obtain~\eqref{eq:conjugate_l0norm_TopDualNorm}. 
  \medskip

  Suppose that both the norm $\TripleNorm{\cdot}$ and the dual norm $\TripleNormDual{\cdot}$
  are orthant-strictly monotonic. Then, 
  Proposition~\ref{pr:nonempty_subdifferential} applies.
  Therefore, for any vector~\( \primal \in \RR^d \)
  and any \( \dual \in \subdifferential{\CouplingCapra}{\np{ \varphi \circ \lzero }}\np{\primal}  \neq \emptyset \),
  we obtain
  \begin{align*}
    \SFMbi{\np{ \varphi \circ \lzero }}{\CouplingCapra}\np{\primal} 
    & \geq 
      \CouplingCapra\np{\primal,\dual} \LowPlus
      \bp{ - \SFM{\np{ \varphi \circ \lzero }}{\CouplingCapra}\np{\dual} }
      \tag{by definition~\eqref{eq:Fenchel-Moreau_biconjugate} of the biconjugate}
    \\
    &=
      \CouplingCapra\np{\primal,\dual} 
      - \SFM{\np{ \varphi \circ \lzero }}{\CouplingCapra}\np{\dual}
      \tag{because \( -\infty < \CouplingCapra\np{\primal,\dual} < +\infty \)
      by~\eqref{eq:coupling_CAPRA}, so that the usual addition applies}
    \\
    &=
      \CouplingCapra\np{\primal,\dual} 
      - \bp{ \CouplingCapra\np{\primal,\dual} - \np{ \varphi \circ \lzero }\np{\primal} }
      \intertext{by definition~\eqref{eq:Capra-subdifferential_b} 
      of the \capra-subdifferential \( \subdifferential{\CouplingCapra}{\np{
      \varphi \circ \lzero }}\np{\primal} \),
      and where again we can use the usual addition}
    % \\
    &=
      \np{ \varphi \circ \lzero }\np{\primal} 
      \eqfinp
  \end{align*}
  On the other hand, we have that 
  \( \SFMbi{\np{ \varphi \circ \lzero }}{\CouplingCapra}\np{\primal} \leq \np{ \varphi \circ \lzero }\np{\primal} \) 
  by~\eqref{eq:galois-cor}.
  We conclude that \( \SFMbi{\np{ \varphi \circ \lzero }}{\CouplingCapra}\np{\primal} =
  \np{ \varphi \circ \lzero }\np{\primal} \), which is~\eqref{eq:biconjugate_l0norm_TopDualNorm}.  
  \medskip

  This ends the proof.
\end{proof}

Our proof of Proposition~\ref{pr:nonempty_subdifferential}, hence of
Theorem~\ref{th:pseudonormlzero_conjugate}, uses the property that the nondecreasing function
\( \varphi : \ic{0,d} \to \RR \) takes finite values.
What happens for a function~\( \varphi \) with values in the extended real numbers?
A typical case of \( \varphi : \ic{0,d} \to \barRR \) is that of the characteristic function of a
subset \( K \subset \ic{1,d} \): \( \delta_{K}\np{\LocalIndex} = 0 \) if \( \LocalIndex \in K \),
and \( \delta_{K}\np{\LocalIndex} = +\infty \) if \( \LocalIndex \not\in K \).
Regarding \capra-convexity of a nondecreasing function of the
\lzeropseudonorm\ taking infinite values, we suspect that a proof would rely on
different assumptions than those of
Proposition~\ref{pr:nonempty_subdifferential}
and Theorem~\ref{th:pseudonormlzero_conjugate}.
% 
% different assumptions than both the norm $\TripleNorm{\cdot}$ and the dual norm $\TripleNormDual{\cdot}$
% are orthant-strictly monotonic.
%       % 
% Indeed, we have shown in \cite[Corollary~13]{Chancelier-DeLara:2020_CAPRA_OPTIMIZATION}
% that (without any assumption on the norm~$\TripleNorm{\cdot}$)
% the characteristic functions of the level sets of the
% \lzeropseudonorm\ (corresponding to \( \varphi =\delta_{ \ic{0,k}} \) 
% for \( k\in \ic{0,d} \)) are $\CouplingCapra$-convex
% if and only if the sequence 
% \( \bseqa{\CoordinateNorm{\TripleNorm{\cdot}}{\LocalIndex}}{\LocalIndex\in\ic{1,d}} \)
% of generalized coordinate-$k$ norms 
% is strictly decreasingly graded with respect to the \lzeropseudonorm\
% (meaning that the implication in~\eqref{eq:level_curve_l0_characterization} is
% replaced by an equivalence). 
% In case the norm~$\TripleNorm{\cdot}$ is orthant-monotonic, 
% we have already indicated that 
% \( \CoordinateNorm{\TripleNorm{\cdot}}{\LocalIndex} 
% =
% \SupportDualNorm{\TripleNorm{\cdot}}{\LocalIndex} \) 
% for \( \LocalIndex\in\ic{0,d} \).
% Now, we do not know of results establishing that 
% the sequence 
% \( \bseqa{\SupportDualNorm{\TripleNorm{\cdot}}{\LocalIndex}}{\LocalIndex\in\ic{1,d}} \)
% of generalized $k$-support dual~norms 
% is strictly decreasingly graded with respect to the \lzeropseudonorm\
% under the assumption that both the norm $\TripleNorm{\cdot}$ and the dual norm $\TripleNormDual{\cdot}$
% are orthant-strictly monotonic.
% Nevertheless,
As an indication, the comment after the proof of 
\cite[Corollary~4.6]{Chancelier-DeLara:2020_CAPRA_OPTIMIZATION}
points out that, when the normed space 
\( \bp{\RR^d,\TripleNorm{\cdot}} \) is strictly convex
(that is, when the unit ball~$\TripleNormBall$ is rotund),
then \( \delta_{\ic{0,k}} \circ \lzero \) is $\CouplingCapra$-convex
for \( k\in \ic{0,d} \).
As the normed space \( \bp{\RR^d,\norm{\cdot}_{p}} \), equipped with 
the $\ell_p$-norm~$\norm{\cdot}_{p}$ (for $p\in [1,\infty]$), is strictly
convex if and only if $p\in ]1,\infty[$,
we get that the characteristic functions of the level sets of the
\lzeropseudonorm\ are $\CouplingCapra$-convex when the source norm
\( \TripleNorm{\cdot}=\norm{\cdot}_{p} \) for $p\in ]1,\infty[$.

% \mdl{I wrote this, but I am not sure that it is worth keeping,
% or maybe as a footnote.
% discuss the cases $p=1$,
% \( \LpSupportNorm{\primal}{1}{k} = \Norm{\primal}_{1} \) and 
% \( \SFMbi{ \delta_{ \LevelSet{\lzero}{k} } }{\CouplingCapra} = 0 \),\\
% and $p=\infty$,
% \( \LpSupportNorm{\primal}{\infty}{k} =
% \max \na{ \Norm{\primal}_{1} / k , \Norm{\primal}_{\infty} } \)
% and 
% \( \SFMbi{ \delta_{ \LevelSet{\lzero}{k} } }{\CouplingCapra} =
% \delta_{ \nset{ \primal \in \RR^d }{ \Norm{\primal}_{1} \leq
% k \Norm{\primal}_{\infty}} } \)}

\section{Convex factorization and variational formulation for the \lzeropseudonorm}
\label{Hidden_convexity_and_variational_formulation_for_the_pseudo_norm}

In this section, we suppose that both the norm~$\TripleNorm{\cdot}$ and the dual norm~$\TripleNormDual{\cdot}$
are orthant-strictly monotonic.
In~\S\ref{Hidden_convexity_in_the_lzeropseudonorm}, 
we show that any nonnegative nondecreasing function of the pseudonorm~$\lzero$ coincides, 
on the unit sphere, with a proper convex lsc function on~$\RR^d$,
and we provide various expressions for this latter function.  
In~\S\ref{Variational_formulation_for_the_pseudo_norm}, 
we deduce a variational formula for nonnegative nondecreasing functions of the
\lzeropseudonorm.

\subsection{Convex factorization and hidden convexity in the \lzeropseudonorm}
\label{Hidden_convexity_in_the_lzeropseudonorm}

We now present a (rather unexpected) consequence of the just established
property that any nondecreasing function of the pseudonorm~$\lzero$ is \capra-convex
(Theorem~\ref{th:pseudonormlzero_conjugate}).
Indeed, we prove that any nonnegative nondecreasing function of the pseudonorm~$\lzero$ coincides, 
on the unit sphere~$\TripleNormSphere =
\bset{\primal \in \RR^d}{\TripleNorm{\primal} = 1} $, 
with a proper convex lsc function on~$\RR^d$,
and that this property extends to subdifferentials.
We also provide various expressions for the underlying proper convex lsc function.

In this part, we make use of the Fenchel conjugacy,
denoted by~\( \LFM{} \) in~\eqref{eq:Fenchel_conjugate}, and of the reverse
Fenchel conjugacy~\( \LFMr{} \) in~\eqref{eq:Fenchel_conjugate_reverse}
(see Appendix~\ref{The_Fenchel_conjugacy}).
  In the case of the Fenchel conjugacy, 
  the primal and dual space are the same space~$\RR^d$
  \emph{and} the scalar product coupling is \emph{symmetric} in the
  primal and dual variables. Hence, we could use the notation~\( \LFM{} \)
  in~\eqref{eq:Fenchel_conjugate_reverse} instead of~\( \LFMr{} \).
  By contrast, in the case of the Capra conjugacy, 
  the primal and dual space are the same space~$\RR^d$
  \emph{but} the Capra coupling is \emph{not symmetric} in the
  primal and dual variables.
  Hence, $\CouplingCapra' \neq \CouplingCapra$ and we cannot use the notation ${}^{\CouplingCapra\CouplingCapra}$,
  but we have to use the notation~${}^{\CouplingCapra\CouplingCapra'}$.
  For the sake of consistency, we maintain the notations~\( \LFMr{} \) and
  ~\( \LFMbi{} \).

\begin{proposition}
  Let $\TripleNorm{\cdot}$ be a norm on~$\RR^d$ with associated sequence
  \( \bseqa{\TopDualNorm{\TripleNorm{\cdot}}{\LocalIndex}}{\LocalIndex\in\ic{1,d}} \)
  of generalized top-$k$ dual~norms, 
  and sequence \(
  \bseqa{\SupportDualNorm{\TripleNorm{\cdot}}{\LocalIndex}}{\LocalIndex\in\ic{1,d}}
  \) of generalized $k$-support dual~norms,
  % as in Definition~\ref{de:top_dual_norm}, 
  and with
  associated \capra\ coupling~$\CouplingCapra$. % as in~\eqref{eq:coupling_CAPRA}. 
  Suppose that both the norm $\TripleNorm{\cdot}$ and the dual norm $\TripleNormDual{\cdot}$
  are orthant-strictly monotonic.
  Let \( \varphi : \ic{0,d} \to \RR_+ \) be a nonnegative nondecreasing function,
  such that \( \varphi\np{0}=0 \).
  We define the function~${\cal L}_{0}^{\varphi} : \RR^d \to \barRR $  by 
  \begin{equation}
    {\cal L}_{0}^{\varphi} = \LFMr{ \bp{ \SFM{\np{ \varphi \circ \lzero }}{\CouplingCapra} } }
    \eqfinp
    \label{eq:definition_calL_0}  
  \end{equation}
  Then, the following statements hold true.
  \begin{enumerate}[label=({\bf\emph{\alph*}})]
    \begin{subequations}
    \item  \label{pr:one}
      The function~${\cal L}_{0}^{\varphi} : \RR^d \to \barRR $  % in~\eqref{eq:definition_calL_0}
      is proper convex lsc.
    \item  \label{pr:two}
      The function~$\varphi \circ \lzero$ coincides, on the unit sphere~$\TripleNormSphere =
      \bset{\primal \in \RR^d}{\TripleNorm{\primal} = 1} $, with the
      function~${\cal L}_{0}^{\varphi}$, that is, 
      \begin{equation}
        \np{ \varphi \circ \lzero }\np{\sphere} = {\cal L}_{0}^{\varphi}\np{\sphere} 
        \eqsepv 
        \forall \sphere \in \TripleNormSphere 
        \eqfinp
        \label{eq:pseudonormlzero_convex_sphere}
      \end{equation}
    \item \label{pr:three}
      The \capra-subdifferential %,      as in~\eqref{eq:Capra-subdifferential_b},
      of the function~$\varphi \circ \lzero$
      coincides, on the unit sphere~$\TripleNormSphere$,
      with the (Rockafellar-Moreau) subdifferential % \footnote{%
        % See~\eqref{eq:Rockafellar-Moreau-subdifferential_a}. 
        % We can also use the
        % definition~\eqref{eq:Rockafellar-Moreau-subdifferential_b}
        % of the (Rockafellar-Moreau) subdifferential
        % as the function~${\cal L}_{0}^{\varphi}$ 
        % is proper convex and as \( \TripleNormSphere \subset \dom{\cal
        %   L}_{0}^{\varphi} \) by~\eqref{eq:pseudonormlzero_convex_sphere}
        % since \( \varphi : \ic{0,d} \to \RR_+ \) takes finite values.}
      % ,
      % as in~\eqref{eq:Rockafellar-Moreau-subdifferential_a},
      of the function~${\cal L}_{0}^{\varphi}$, that is, 
      \begin{equation}
        \subdifferential{\CouplingCapra}{\np{ \varphi \circ \lzero }}\np{\sphere}
        =  \subdifferential{}{{\cal L}_{0}^{\varphi}}\np{\sphere} 
        \eqsepv 
        \forall \sphere \in \TripleNormSphere 
        \eqfinp
        \label{eq:Capra-subdifferential=Rockafellar-Moreau-subdifferential}
      \end{equation}
    \item \label{pr:four}
      Convex factorization property.       
      The function~$\varphi \circ \lzero$ can be expressed as the 
      composition of the proper convex lsc function~${\cal L}_{0}^{\varphi}$
      with the 
      normalization mapping~$\normalized$%in~\eqref{eq:normalization_mapping}
      , that is,
      \begin{align}
        \varphi \circ \lzero
        &=
          {\cal L}_{0}^{\varphi} \circ \normalized
          \label{eq:pseudonormlzero_convex_normalization_mapping}
          \intertext{ or, equivalently, }
          \np{ \varphi \circ \lzero }\np{\primal}
        &= 
          {\cal L}_{0}^{\varphi} \Bp{ \frac{ \primal }{ \TripleNorm{\primal} } } 
          \eqsepv 
          \forall \primal \in \RR^d\setminus\{0\}
          \eqfinp 
          \label{eq:pseudonormlzero_convex_normalization_mapping_bis}
      \end{align}
    \end{subequations}
  \item \label{pr:five}
    \begin{subequations}
      The function~${\cal L}_{0}^{\varphi}$ is given by
      \begin{equation}
        {\cal L}_{0}^{\varphi}= 
        \LFMr{ \bgp{ \sup_{\LocalIndex\in\ic{0,d}} 
            \Bc{ \TopDualNorm{\TripleNorm{\cdot}}{\LocalIndex}-\varphi\np{\LocalIndex} } } }
        \eqfinp
        \label{eq:calL_0_definition}
      \end{equation}
    \item \label{pr:six}
      The function~${\cal L}_{0}^{\varphi}$ is the largest convex lsc function
      below the integer valued function 
      \begin{equation}
        \RR^d \ni \primal \mapsto 
        \inf_{\LocalIndex\in\ic{0,d}} \Bc{ \delta_{
            \SupportDualNorm{\TripleNormBall}{\LocalIndex} }\np{\primal} + \varphi\np{\LocalIndex} }
        \eqsepv
        \label{eq:pseudonormlzero_largest_convex_balls}
      \end{equation}
      that is, below the function
      $\primal \in \SupportDualNorm{\TripleNormBall}{\LocalIndex} \setminus \SupportDualNorm{\TripleNormBall}{\LocalIndex-1}
      \mapsto \varphi\np{\LocalIndex}$ for $\LocalIndex\in\ic{1,d}$ and  $\primal \in
      \SupportDualNorm{\TripleNormBall}{0} = \{0\} \mapsto 0$,
      the function
      being infinite outside~\( \SupportDualNorm{\TripleNormBall}{d} = \BALL \)
      (the above construction makes sense as \(
      \SupportDualNorm{\TripleNormBall}{1}
      \subset \cdots \subset
      \SupportDualNorm{\TripleNormBall}{\LocalIndex-1} \subset \SupportDualNorm{\TripleNormBall}{\LocalIndex} 
      \subset \cdots \subset \SupportDualNorm{\TripleNormBall}{d} = \BALL \)). 
    \item \label{pr:seven}
      The function~${\cal L}_{0}^{\varphi}$ is the largest convex lsc function
      below the integer valued function 
      \begin{equation}
        \RR^d \ni \primal \mapsto 
        \inf_{\LocalIndex\in\ic{0,d}} \Bc{ \delta_{
            \SupportDualNorm{\TripleNormSphere}{\LocalIndex} }\np{\primal} +
          \varphi\np{\LocalIndex} }
        \eqsepv
        \label{eq:pseudonormlzero_largest_convex_spheres}
      \end{equation}
      that is, below the function
      \( \primal \in \RR^d \mapsto \inf \varphi\bset{ \LocalIndex \in \ic{0,d} }%
      {\primal \in \SupportDualNorm{\TripleNormSphere}{\LocalIndex}} \),
      with the convention that \( \SupportDualNorm{\TripleNormSphere}{0}=\{0\} \)
      and that \( \inf \emptyset = +\infty \).
    \end{subequations}
  \item \label{pr:eight}
    \begin{subequations}
      The proper convex lsc function~${\cal L}_{0}^{\varphi}$ also has three variational expressions as follows,
      where \( \Delta_{d+1} \) is the simplex of~$\RR^{d+1}$,
      \begin{align}
        {\cal L}_{0}^{\varphi}\np{\primal} 
        &= 
          \min_{ \substack{%
          \np{\lambda_0,\lambda_1,\ldots,\lambda_d} \in \Delta_{d+1} 
        \\
        \primal \in \sum_{ l=1 }^{ d } \lambda_\LocalIndex \SupportDualNorm{\TripleNormBall}{l} 
        } } 
        \sum_{ l=1}^{ d } \lambda_\LocalIndex \varphi\np{\LocalIndex}
        \eqsepv \forall \primal \in \RR^d 
        \label{eq:biconjugate_with_balls}
        \\
        &= 
          \min_{ \substack{%
          \np{\lambda_0,\lambda_1,\ldots,\lambda_d} \in \Delta_{d+1} 
        \\
        \primal \in \sum_{ l=1 }^{ d } \lambda_\LocalIndex \SupportDualNorm{\TripleNormSphere}{l} 
        } } 
        \sum_{ l=1 }^{ d } \lambda_\LocalIndex \varphi\np{\LocalIndex}
        \eqsepv \forall \primal \in \RR^d 
        \label{eq:biconjugate_with_spheres}
        \\
        &= 
          \min_{ \substack{%
          \primal^{(1)} \in \RR^d, \ldots, \primal^{(d)} \in \RR^d 
        \\
        \sum_{ \LocalIndex=1 }^{ d } \SupportDualNorm{\TripleNorm{\primal^{(\LocalIndex)}}}{\LocalIndex} \leq 1 
        \\
        \sum_{ \LocalIndex=1 }^{ d } \primal^{(\LocalIndex)} = \primal
        } }
        \sum_{ \LocalIndex=1 }^{ d } \varphi\np{\LocalIndex} \SupportDualNorm{\TripleNorm{\primal^{(\LocalIndex)}}}{\LocalIndex} 
        \eqsepv \forall \primal \in \RR^d 
        \eqfinp
        \label{eq:pseudonormlzero_convex_minimum}
      \end{align}
    \end{subequations}
  \end{enumerate}
  \label{pr:calL0}
\end{proposition}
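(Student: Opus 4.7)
The plan rests on the observation that on the unit sphere~$\TripleNormSphere$, the Capra coupling collapses to the scalar product: $\CouplingCapra(\sphere,\dual) = \nscal{\sphere}{\dual}$. Consequently, by~\eqref{eq:Fenchel-Moreau_biconjugate} and~\eqref{eq:definition_calL_0}, the Capra-biconjugate of $\varphi \circ \lzero$ at any $\sphere \in \TripleNormSphere$ equals $\LFMr{\SFM{\np{\varphi \circ \lzero}}{\CouplingCapra}}(\sphere) = {\cal L}_{0}^{\varphi}(\sphere)$. Combining with Theorem~\ref{th:pseudonormlzero_conjugate}, which yields $\SFMbi{\np{\varphi \circ \lzero}}{\CouplingCapra} = \varphi \circ \lzero$, this proves item~\ref{pr:two}. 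Item~\ref{pr:one} then follows because ${\cal L}_{0}^{\varphi}$ is a Fenchel conjugate, hence convex lsc; properness reduces to ${\cal L}_{0}^{\varphi}(0)=0$, which holds since $\SFM{\np{\varphi \circ \lzero}}{\CouplingCapra}$ is nonnegative (the $\LocalIndex=0$ term in~\eqref{eq:conjugate_l0norm_TopDualNorm} vanishes as $\varphi(0)=0$) and attains~$0$ at the origin. For item~\ref{pr:three}, I would compare the Young-equality characterizations of both subdifferentials at~$\sphere$: since $\SFM{\np{\varphi \circ \lzero}}{\CouplingCapra}$ is itself proper convex lsc (it is a Fenchel conjugate; cf.\ Appendix~\ref{Appendix:Proposition}), the Fenchel biconjugate theorem gives $\LFM{{\cal L}_{0}^{\varphi}} = \SFM{\np{\varphi \circ \lzero}}{\CouplingCapra}$, and combining with item~\ref{pr:two} the two subdifferential conditions coincide. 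Finally, item~\ref{pr:four} follows from item~\ref{pr:two} applied to $\normalized(\primal)$ together with the $0$-homogeneity~\eqref{eq:lzeropseudonorm_is_0-homogeneous} of~$\lzero$ for $\primal \neq 0$; the case $\primal=0$ uses ${\cal L}_{0}^{\varphi}(0) = 0 = \varphi(0)$.

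Item~\ref{pr:five} is a direct substitution of~\eqref{eq:conjugate_l0norm_TopDualNorm} into~\eqref{eq:definition_calL_0}. For items~\ref{pr:six} and~\ref{pr:seven}, the plan is to identify ${\cal L}_{0}^{\varphi}$ as the Fenchel biconjugate~$\LFMbi{g}$---hence the largest convex lsc minorant---of the proposed integer-valued function~$g$. Computing $\LFM{g}$ for the nested-balls function~\eqref{eq:pseudonormlzero_largest_convex_balls} by exchanging $\inf_\LocalIndex$ and $\sup_\primal$ yields
\begin{equation*}
  \LFM{g}(\dual) = \sup_{\LocalIndex \in \ic{0,d}} \Bc{ \sup_{\primal \in \SupportDualNorm{\TripleNormBall}{\LocalIndex}} \nscal{\primal}{\dual} - \varphi(\LocalIndex) } = \sup_{\LocalIndex \in \ic{0,d}} \Bc{ \TopDualNorm{\TripleNorm{\dual}}{\LocalIndex} - \varphi(\LocalIndex) },
\end{equation*}
where the inner supremum equals $\TopDualNorm{\TripleNorm{\dual}}{\LocalIndex}$ by the duality relation~\eqref{eq:support_dual_norm}. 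This agrees with $\SFM{\np{\varphi \circ \lzero}}{\CouplingCapra}$ by~\eqref{eq:conjugate_l0norm_TopDualNorm}, so $\LFMbi{g} = \LFMr{\LFM{g}} = {\cal L}_{0}^{\varphi}$, proving~\ref{pr:six}. Item~\ref{pr:seven} follows identically, using that for any norm the supremum of a linear form over the unit sphere equals its supremum over the unit ball.

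For item~\ref{pr:eight}, I would invoke the standard variational representation of the convex envelope of an infimum of indicators-plus-constants: $\LFMbi{g}(\primal)$ is the infimum of $\sum_\LocalIndex \lambda_\LocalIndex \varphi(\LocalIndex)$ over convex combinations $\sum_\LocalIndex \lambda_\LocalIndex y_\LocalIndex = \primal$ with $y_\LocalIndex$ in the appropriate ball (resp.~sphere), which yields~\eqref{eq:biconjugate_with_balls} and~\eqref{eq:biconjugate_with_spheres} (the $\LocalIndex=0$ slack coefficient contributes zero cost since $\varphi(0)=0$ and $y_0=0$). Formula~\eqref{eq:pseudonormlzero_convex_minimum} follows from the change of variables $\primal^{(\LocalIndex)} = \lambda_\LocalIndex y_\LocalIndex$ with $y_\LocalIndex \in \SupportDualNorm{\TripleNormBall}{\LocalIndex}$, giving $\SupportDualNorm{\TripleNorm{\primal^{(\LocalIndex)}}}{\LocalIndex} = \lambda_\LocalIndex \SupportDualNorm{\TripleNorm{y_\LocalIndex}}{\LocalIndex} \leq \lambda_\LocalIndex$; conversely, given a decomposition with $\sum_\LocalIndex \SupportDualNorm{\TripleNorm{\primal^{(\LocalIndex)}}}{\LocalIndex} \leq 1$, one recovers $\lambda_\LocalIndex = \SupportDualNorm{\TripleNorm{\primal^{(\LocalIndex)}}}{\LocalIndex}$ and $y_\LocalIndex = \primal^{(\LocalIndex)}/\lambda_\LocalIndex$. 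The main obstacles I anticipate are (i) bookkeeping the $\LocalIndex=0$ conventions, where $\SupportDualNorm{\TripleNormBall}{0} = \SupportDualNorm{\TripleNormSphere}{0} = \{0\}$ is used as a working convention distinct from the unit ball of the zero seminorm; (ii) justifying the $\inf$-$\sup$ exchange and the biconjugate-as-convex-envelope representation despite $g$ being neither convex nor lsc \emph{a priori}; and (iii) showing that the infima in~\ref{pr:eight} are attained, which calls for a compactness argument exploiting the boundedness of the balls $\SupportDualNorm{\TripleNormBall}{\LocalIndex}$.
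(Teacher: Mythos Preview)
Your proposal is correct and, for items~\ref{pr:one}--\ref{pr:five}, essentially mirrors the paper's argument: the paper too exploits that $\CouplingCapra(\sphere,\dual)=\nscal{\sphere}{\dual}$ on~$\TripleNormSphere$ (packaged there as the citation to \cite[Equation~(30d)]{Chancelier-DeLara:2020_CAPRA_OPTIMIZATION}, which says $\SFMbi{f}{\CouplingCapra}=\LFMr{\bp{\SFM{f}{\CouplingCapra}}}\circ\normalized$), invokes Theorem~\ref{th:pseudonormlzero_conjugate}, and for~\ref{pr:three} runs the same Young-equality comparison, using that $\SFM{\np{\varphi\circ\lzero}}{\CouplingCapra}$ is already a Fenchel conjugate so that $\LFM{{\cal L}_0^{\varphi}}=\SFM{\np{\varphi\circ\lzero}}{\CouplingCapra}$.

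Where you diverge is items~\ref{pr:six}--\ref{pr:eight}: the paper does not redo the computations but simply quotes \cite[Proposition~4.5]{Chancelier-DeLara:2020_CAPRA_OPTIMIZATION} (reproduced in Appendix~\ref{Appendix:Proposition}), which already contains~\eqref{CAPRA-eq:Biconjugate_of_min_balls_ind}--\eqref{CAPRA-eq:pseudonormlzero_convex_minimum} for the coordinate-$k$ norms, and then translates via $\CoordinateNorm{\TripleNorm{\cdot}}{\LocalIndex}=\SupportDualNorm{\TripleNorm{\cdot}}{\LocalIndex}$ under orthant-monotonicity. Your direct route---compute $\LFM{g}$ by turning $\sup_{\primal}\{\nscal{\primal}{\dual}-\inf_{\LocalIndex}[\cdots]\}$ into $\sup_{\LocalIndex}\sup_{\primal}[\cdots]$---is sound and more self-contained; note that what you call an ``$\inf$--$\sup$ exchange'' is really a $\sup$--$\sup$ exchange (the minus sign flips the inner $\inf$), so obstacle~(ii) is vacuous there. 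The genuine work you flag in~(iii), attainment in~\eqref{eq:biconjugate_with_balls}--\eqref{eq:pseudonormlzero_convex_minimum}, is exactly what Proposition~\ref{pr:pseudonormlzero_biconjugate_varphi} absorbs; if you proceed independently you will indeed need a Carath\'eodory/compactness argument on $\Delta_{d+1}\times\prod_{\LocalIndex}\SupportDualNorm{\TripleNormBall}{\LocalIndex}$.
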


\begin{proof}
  As in the beginning of the proof of
  Proposition~\ref{pr:nonempty_subdifferential},
  we make the observation that, 
  since the norm $\TripleNorm{\cdot}$ is orthant-strictly monotonic,
  it is orthant-monotonic, so that we have 
  \( \CoordinateNorm{\TripleNorm{\cdot}}{\LocalIndex} 
  =
  \SupportDualNorm{\TripleNorm{\cdot}}{\LocalIndex} \) 
  and
  \(      \CoordinateNormDual{\TripleNorm{\cdot}}{\LocalIndex}
  =
  \TopDualNorm{\TripleNorm{\cdot}}{\LocalIndex} \),
  for \( \LocalIndex\in\ic{0,d} \)
  by~\eqref{eq:dual_coordinate-k_norm_=_generalized_top-k_norm} in
  Proposition~\ref{pr:dual_coordinate-k_norm_=_generalized_top-k_norm}
  in Appendix~\ref{Coordinate-k_and_dual_coordinate-k_norms}
  (with the convention that these are the null seminorms in the case \( \LocalIndex=0 \)).
  \medskip

  \noindent\ref{pr:one} 
  As the Fenchel conjugacy induces a one-to-one correspondence
  between the closed convex functions on~$\RR^d$ and themselves
  \cite[Theorem~5]{Rockafellar:1974}, the function
  \( {\cal L}_{0}^{\varphi}=\LFMr{ \bp{ \SFM{\np{ \varphi \circ \lzero}}{\CouplingCapra} } } \) 
  in~\eqref{eq:definition_calL_0} is closed convex.
  We now show that it is proper.
  Indeed, on the one hand, it is easily seen,
  by the very definition~\eqref{eq:Fenchel-Moreau_conjugate}, that the function
  \( \SFM{\np{ \varphi \circ \lzero}}{\CouplingCapra} \) takes finite values,
  from which we deduce that the function
  \( \LFMr{ \bp{ \SFM{\np{ \varphi \circ \lzero}}{\CouplingCapra} } } \) 
  never takes the value~$-\infty$, by~\eqref{eq:Fenchel_conjugate_reverse}.
  On the other hand,
  we have \( \SFMbi{ \np{ \varphi \circ \lzero } }{\CouplingCapra} =
  \LFMr{ \bp{ \SFM{\np{ \varphi \circ \lzero}}{\CouplingCapra} } }
  \circ \normalized \) by 
  \cite[Equation~(30d)]{Chancelier-DeLara:2020_CAPRA_OPTIMIZATION},
  and \( \SFMbi{ \np{ \varphi \circ \lzero } }{\CouplingCapra}
  \leq \varphi \circ \lzero \) by~\eqref{eq:galois-cor},
  from which we deduce that, for any \( \primal \in \TripleNormSphere \),
  we have  
  \( \LFMr{ \bp{ \SFM{\np{ \varphi \circ \lzero}}{\CouplingCapra} } }\np{\primal}
  =\SFMbi{\np{ \varphi \circ \lzero }}{\CouplingCapra}\np{\primal}
  \leq \np{ \varphi \circ \lzero }\np{\primal} < +\infty \)
  since \( \varphi : \ic{0,d} \to \RR_+ \).
  As a consequence, the function \( \LFMr{ \bp{ \SFM{\np{ \varphi \circ \lzero}}{\CouplingCapra} } } \) 
  is proper.
  \medskip
  
  \noindent\ref{pr:two}
  The assumptions make it possible to conclude that 
  \( \SFMbi{ \np{ \varphi \circ \lzero } }{\CouplingCapra} = \varphi \circ \lzero \), 
  thanks to Theorem~\ref{th:pseudonormlzero_conjugate}.
  We deduce from 
  \cite[Proposition~4.3]{Chancelier-DeLara:2020_CAPRA_OPTIMIZATION} that,
  being $\CouplingCapra$-convex, the function~$\varphi \circ \lzero$ coincides, on the unit sphere~$\TripleNormSphere$, 
  with the closed convex function ${\cal L}_{0}^{\varphi} : \RR^d \to \barRR $
  given by~\cite[Equation~(30d)]{Chancelier-DeLara:2020_CAPRA_OPTIMIZATION}
  % given by~\eqref{eq:CAPRA_Fenchel-Moreau_biconjugate}, 
  namely 
  \( {\cal L}_{0}^{\varphi} = \LFMr{ \bp{ \SFM{\np{ \varphi \circ \lzero
        }}{\CouplingCapra} } } \).
  Thus, we have proved~\eqref{eq:pseudonormlzero_convex_sphere}.
  \medskip

  \noindent\ref{pr:three} 
  Let \( \sphere \in \TripleNormSphere \). We
  prove~\eqref{eq:Capra-subdifferential=Rockafellar-Moreau-subdifferential}
  as follows:
  \begin{align*}
    \dual \in
    \subdifferential{}{{\cal L}_{0}^{\varphi}}\np{\sphere} 
    \iff &
           \LFM{ \np{ {\cal L}_{0}^{\varphi} } }\np{\dual}
           = \nscal{\sphere}{\dual} \LowPlus
           \bp{ -{\cal L}_{0}^{\varphi}\np{\sphere} }
           \intertext{by definition~\eqref{eq:Rockafellar-Moreau-subdifferential_a} of 
           the (Rockafellar-Moreau) subdifferential of a function}
         % \\
    \iff &
           \LFM{ \Bp{ \LFMr{ \bp{ \SFM{\np{ \varphi \circ
           \lzero}}{\CouplingCapra} } } } }\np{\dual}
           = \nscal{\sphere}{\dual} \LowPlus
           \bgp{ -\Bp{ \LFMr{ \bp{ \SFM{\np{ \varphi \circ
           \lzero}}{\CouplingCapra} } } }\np{\sphere} }
           \tag{by definition~\eqref{eq:definition_calL_0} of
           \( {\cal L}_{0}^{\varphi}=\LFMr{ \bp{ \SFM{\np{ \varphi \circ \lzero}}{\CouplingCapra} } } \)}
    \\
    \iff &
           \SFM{\np{ \varphi \circ \lzero}}{\CouplingCapra}\np{\dual}
           = \nscal{\sphere}{\dual} \LowPlus
           \bgp{ -\Bp{ \LFMr{ \bp{ \SFM{\np{ \varphi \circ
           \lzero}}{\CouplingCapra} } } }\np{\sphere} }
           \intertext{because the function \( \SFM{\np{ \varphi \circ
           \lzero}}{\CouplingCapra} \) is a Fenchel conjugate
           by~\cite[Equation~(30b)]{Chancelier-DeLara:2020_CAPRA_OPTIMIZATION},
           hence is closed convex, hence is equal to its Fenchel biconjugate
           \( \LFM{ \Bp{ \LFMr{ \bp{ \SFM{\np{ \varphi \circ
           \lzero}}{\CouplingCapra} } } } } \)}
% \\
    \iff &
           \SFM{\np{ \varphi \circ \lzero}}{\CouplingCapra}\np{\dual}
           = \CouplingCapra\np{\sphere,\dual}  \LowPlus
           \bgp{ -\Bp{ \LFMr{ \bp{ \SFM{\np{ \varphi \circ
           \lzero}}{\CouplingCapra} } } }\np{\sphere} }
           \tag{by definition~\eqref{eq:coupling_CAPRA} of
           \( \CouplingCapra\np{\sphere,\dual} \) as \( \sphere \in \TripleNormSphere \) }
    \\
    \iff &
           \SFM{\np{ \varphi \circ \lzero}}{\CouplingCapra}\np{\dual}
           = \CouplingCapra\np{\sphere,\dual}  \LowPlus
           \Bp{ -\bp{ \SFMbi{\np{ \varphi \circ \lzero}}{\CouplingCapra}
           }\np{\sphere} }
           \intertext{because \( \SFMbi{ \np{ \varphi \circ \lzero } }{\CouplingCapra} =
           \LFMr{ \bp{ \SFM{\np{ \varphi \circ \lzero}}{\CouplingCapra} } }
           \circ \normalized \) by 
           \cite[Equation~(30d)]{Chancelier-DeLara:2020_CAPRA_OPTIMIZATION}, and using
           that \( \normalized\np{\sphere}=\sphere \) since 
           \( \sphere\in\TripleNormSphere \) by 
           definition~\eqref{eq:normalization_mapping} of the
           normalization mapping~$\normalized$}
         % \\
    \iff &
           \SFM{\np{ \varphi \circ \lzero}}{\CouplingCapra}\np{\dual}
           = \CouplingCapra\np{\sphere,\dual}  \LowPlus
           \bp{ - \np{ \varphi \circ \lzero }\np{\sphere} }
           \tag{as \( \SFMbi{ \np{ \varphi \circ \lzero } }{\CouplingCapra} =
           \varphi \circ \lzero \)
           by Theorem~\ref{th:pseudonormlzero_conjugate}}
    \\
    \iff &
           \dual \in
           \subdifferential{\CouplingCapra}{\np{ \varphi \circ \lzero }}\np{\sphere}
           \eqfinp
           \tag{by definition~\eqref{eq:Capra-subdifferential_b} of
           the \capra-subdifferential}
  \end{align*}
  \medskip

  \noindent\ref{pr:four} 
  The equality~\eqref{eq:pseudonormlzero_convex_normalization_mapping}
  is a consequence of the formula
  \( \varphi \circ \lzero = \SFMbi{\np{ \varphi \circ \lzero}}{\CouplingCapra} =
  \LFMr{ \bp{ \SFM{\np{ \varphi \circ \lzero}}{\CouplingCapra} } } 
  \circ \normalized \)
  given by~\cite[Equation~(30d)]{Chancelier-DeLara:2020_CAPRA_OPTIMIZATION}.
  The equality~\eqref{eq:pseudonormlzero_convex_normalization_mapping_bis}
  is an easy consequence
  of~\eqref{eq:pseudonormlzero_convex_normalization_mapping}
  and of the definition~\eqref{eq:normalization_mapping}
  of the normalization mapping~$\normalized$.
  \medskip

  \noindent\ref{pr:five} 
  As \( {\cal L}_{0}^{\varphi} = \LFMr{ \bp{ \SFM{\np{ \varphi \circ \lzero }}{\CouplingCapra} } } \)
  by definition~\eqref{eq:definition_calL_0}, 
  and as we have that \( \LFMr{ \bp{ \SFM{\np{ \varphi \circ \lzero }}{\CouplingCapra} } }= \)
  \( \LFMr{ \Bp{ \sup_{\LocalIndex\in\ic{0,d}} \Bc{ \TopDualNorm{\TripleNorm{\cdot}}{\LocalIndex}-\varphi\np{\LocalIndex}
      } } } \) 
  by~\eqref{eq:conjugate_l0norm_TopDualNorm},
  we get~\eqref{eq:calL_0_definition}.
  \medskip

  \noindent\ref{pr:six} 
  We use Proposition~\ref{pr:pseudonormlzero_biconjugate_varphi}
  (Appendix~\ref{Appendix:Proposition})
  % \cite[Proposition~4.5]{Chancelier-DeLara:2020_CAPRA_OPTIMIZATION},
  and especially Equations~\eqref{CAPRA-eq:caprastarprim-of-sup-norms} and~\eqref{CAPRA-eq:Biconjugate_of_min_balls_ind}
  to obtain~\eqref{eq:pseudonormlzero_largest_convex_balls}.
  Indeed, we have that 
  \begin{align}
    {\cal L}_{0}^{\varphi} 
    &= \LFMr{ \bp{ \SFM{\np{ \varphi \circ \lzero }}{\CouplingCapra} } }=
      \LFMr{ \Bp{ \sup_{\LocalIndex\in\ic{0,d}} \Bc{ \TopDualNorm{\TripleNorm{\cdot}}{\LocalIndex}-\varphi\np{\LocalIndex}
      } } }
      \tag{by~\eqref{eq:calL_0_definition} proved in Item~\ref{pr:five}}
    \\
    &=
      \LFMr{ 
      \Bp{ 
      \sup_{\LocalIndex\in\ic{0,d}} \bc{
      \CoordinateNormDual{\TripleNorm{\cdot}}{\LocalIndex} -\varphi\np{\LocalIndex} } } }
      \intertext{since $\TopDualNorm{\TripleNorm{\cdot}}{\LocalIndex}=
      \CoordinateNormDual{\TripleNorm{\cdot}}{\LocalIndex}$
      as recalled at the beginning of the proof}
    % \\
    &=
      \LFMbi{ \Bp{ \inf_{\LocalIndex\in\ic{0,d}} \bc{ 
      \delta_{  \CoordinateNorm{\TripleNormBall}{\LocalIndex} } \UppPlus \varphi\np{\LocalIndex} } } }
      \tag{by~\eqref{CAPRA-eq:caprastarprim-of-sup-norms} and~\eqref{CAPRA-eq:Biconjugate_of_min_balls_ind}}
    \\
    &=
      \LFMbi{ \Bp{ \inf_{\LocalIndex\in\ic{0,d}} \bc{ 
      \delta_{ \SupportDualNorm{\TripleNormBall}{\LocalIndex} } \UppPlus \varphi\np{\LocalIndex} } } }
      \tag{as \( \CoordinateNorm{\TripleNormBall}{\LocalIndex}
      = \SupportDualNorm{\TripleNormBall}{\LocalIndex}\)
      since \( \CoordinateNorm{\TripleNorm{\cdot}}{\LocalIndex} 
      =
      \SupportDualNorm{\TripleNorm{\cdot}}{\LocalIndex} \),
      as recalled at the beginning of the proof}
      \eqfinv
  \end{align}
  which gives~\eqref{eq:pseudonormlzero_largest_convex_balls}.
  The inclusions and equality \(      \CoordinateNorm{\TripleNormBall}{1} 
  \subset \cdots \subset
  \CoordinateNorm{\TripleNormBall}{\LocalIndex} \subset \CoordinateNorm{\TripleNormBall}{\LocalIndex+1} 
  \subset \cdots \subset \CoordinateNorm{\TripleNormBall}{d} = \TripleNormBall \)
  have been established for the generalized coordinate-$k$ norms (see
  Definition~\ref{de:coordinate_norm}) in
  \cite[Equation~(24)]{Chancelier-DeLara:2020_CAPRA_OPTIMIZATION}. 
  Now, since  \( \CoordinateNorm{\TripleNorm{\cdot}}{\LocalIndex} 
  =
  \SupportDualNorm{\TripleNorm{\cdot}}{\LocalIndex} \),
  % as recalled at the beginning of the proof, 
  we get that 
  \( \SupportDualNorm{\TripleNormBall}{1}
  \subset \cdots \subset
  \SupportDualNorm{\TripleNormBall}{\LocalIndex-1} \subset \SupportDualNorm{\TripleNormBall}{\LocalIndex} 
  \subset \cdots \subset \SupportDualNorm{\TripleNormBall}{d} = \BALL \).
  \medskip

  \noindent\ref{pr:seven} 
  We use
  Proposition~\ref{pr:pseudonormlzero_biconjugate_varphi}
  (Appendix~\ref{Appendix:Proposition})
  % \cite[Proposition~4.5]{Chancelier-DeLara:2020_CAPRA_OPTIMIZATION},
  and especially Equations~\eqref{CAPRA-eq:Biconjugate_of_min_balls_ind} 
  and~\eqref{CAPRA-eq:Biconjugate_of_min_spheres_ind},
  to obtain~\eqref{eq:pseudonormlzero_largest_convex_spheres}.
  Indeed, we have that 
  \begin{align}
    {\cal L}_{0}^{\varphi} 
    &=
      \LFMbi{ \Bp{ \inf_{\LocalIndex\in\ic{0,d}} \bc{ 
      \delta_{  \CoordinateNorm{\TripleNormBall}{\LocalIndex} } \UppPlus \varphi\np{\LocalIndex} } } }
      \tag{as seen in Item~\ref{pr:six}}
    \\
    &=
      \LFMbi{ \Bp{ \inf_{\LocalIndex\in\ic{0,d}} \bc{ 
      \delta_{ \CoordinateNorm{\TripleNormSphere}{\LocalIndex} } \UppPlus \varphi\np{\LocalIndex} } } }
      \tag{by~\eqref{CAPRA-eq:Biconjugate_of_min_balls_ind} 
      and~\eqref{CAPRA-eq:Biconjugate_of_min_spheres_ind}}
    \\
    &=
      \LFMbi{ \Bp{ \inf_{\LocalIndex\in\ic{0,d}} \bc{ 
      \delta_{\SupportDualNorm{\TripleNormSphere}{\LocalIndex}} \UppPlus \varphi\np{\LocalIndex} } } }
      \tag{since \( \CoordinateNorm{\TripleNormSphere}{\LocalIndex}
      = \SupportDualNorm{\TripleNormSphere}{\LocalIndex} \)
      as \( \CoordinateNorm{\TripleNorm{\cdot}}{\LocalIndex} 
      =
      \SupportDualNorm{\TripleNorm{\cdot}}{\LocalIndex} \)}
      \eqfinv
  \end{align}
  which gives~\eqref{eq:pseudonormlzero_largest_convex_spheres}.
  \medskip

  \noindent\ref{pr:eight} 
  We use the property that, for any \( k \in \ic{1,d} \), we have 
  \( \CoordinateNorm{\TripleNorm{\cdot}}{k} =
  \SupportDualNorm{\TripleNorm{\cdot}}{k} \) and also 
  Proposition~\ref{pr:pseudonormlzero_biconjugate_varphi}
  (Appendix~\ref{Appendix:Proposition})
  % \cite[Proposition~4.5]{Chancelier-DeLara:2020_CAPRA_OPTIMIZATION},
  and especially Equations~\eqref{CAPRA-eq:biconjugate_with_balls},
  \eqref{CAPRA-eq:biconjugate_with_spheres}
  and \eqref{CAPRA-eq:pseudonormlzero_convex_minimum}
  to obtain~\eqref{eq:biconjugate_with_balls},
  \eqref{eq:biconjugate_with_spheres}
  and~\eqref{eq:pseudonormlzero_convex_minimum}.
  \medskip
  
  This ends the proof.
\end{proof}

\subsection{Variational formulation for the \lzeropseudonorm}
\label{Variational_formulation_for_the_pseudo_norm}

As an application of Proposition~\ref{pr:calL0},
we obtain the second main result of this paper, namely a variational formulation for
(nonnegative nondecreasing functions of) the \lzeropseudonorm.

\begin{theorem}
  Let $\TripleNorm{\cdot}$ be a norm on~$\RR^d$ with associated sequence
  \( \bseqa{\TopDualNorm{\TripleNorm{\cdot}}{\LocalIndex}}{\LocalIndex\in\ic{1,d}} \)
  of generalized $k$-support dual~norms
  as in Definition~\ref{de:top_dual_norm}, 
  and with
  associated \capra\ coupling~$\CouplingCapra$ as in~\eqref{eq:coupling_CAPRA}. 
  Suppose that both the norm $\TripleNorm{\cdot}$ and the dual norm $\TripleNormDual{\cdot}$
  are orthant-strictly monotonic.
  Let \( \varphi : \ic{0,d} \to \RR_+ \) be a nonnegative nondecreasing function
  such that \( \varphi\np{0}=0 \).
  Then, we have the equality 
  \begin{equation}
    \varphi\bp{ \lzero\np{\primal} }
    =
    \frac{ 1 }{ \TripleNorm{\primal} } 
    \min_{ \substack{%
        {\primal}^{(1)} \in \RR^d, \ldots, {\primal}^{(d)} \in \RR^d 
        \\
        \sum_{ \LocalIndex=1 }^{ d } \SupportDualNorm{\TripleNorm{{\primal}^{(\LocalIndex)}}}{\LocalIndex} \leq \TripleNorm{\primal}
        \\
        \sum_{ \LocalIndex=1 }^{ d } {\primal}^{(\LocalIndex)} = \primal  } }
    \sum_{ \LocalIndex=1 }^{ d } \varphi\np{\LocalIndex} \SupportDualNorm{\TripleNorm{{\primal}^{(\LocalIndex)}}}{\LocalIndex} 
    \eqsepv \forall \primal \in \RR^d\setminus\{0\}
    \eqfinv
    \label{eq:Variational_formulation_for_the_pseudo_norm}  
  \end{equation}
  where the sequence of 
  generalized $k$-support dual~norms 
  \( \bseqa{ \SupportDualNorm{\TripleNorm{\cdot}}{\LocalIndex} }{\LocalIndex\in\ic{1,d}} \) 
  has been introduced in Definition~\ref{de:top_dual_norm}.

  When \( \lzero\np{\primal} = l \geq 1 \), 
  the minimum in~\eqref{eq:Variational_formulation_for_the_pseudo_norm} 
  is achieved at 
  \( \np{ {\primal}^{(1)}, \ldots, {\primal}^{(d)} } \in \np{\RR^d}^d \) such that 
  \( {\primal}^{(j)}=0 \) for \( j \neq l \) and \( {\primal}^{(l)}=\primal \). 
  \label{th:Variational_formulation_for_the_pseudo_norm}
\end{theorem}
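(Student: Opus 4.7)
The plan is to reduce the statement to Proposition~\ref{pr:calL0}, specifically the convex factorization property~\eqref{eq:pseudonormlzero_convex_normalization_mapping_bis} and the variational formula~\eqref{eq:pseudonormlzero_convex_minimum}. For any \( \primal \in \RR^d\setminus\{0\} \), the factorization property gives
\begin{equation*}
\varphi\bp{\lzero\np{\primal}}
= {\cal L}_{0}^{\varphi}\Bp{\frac{\primal}{\TripleNorm{\primal}}}
= \min_{\substack{ q^{(1)},\ldots,q^{(d)} \in \RR^d \\ \sum_{\LocalIndex=1}^{d}\SupportDualNorm{\TripleNorm{q^{(\LocalIndex)}}}{\LocalIndex} \leq 1 \\ \sum_{\LocalIndex=1}^{d} q^{(\LocalIndex)} = \primal/\TripleNorm{\primal} } }
\sum_{\LocalIndex=1}^{d} \varphi\np{\LocalIndex}\,\SupportDualNorm{\TripleNorm{q^{(\LocalIndex)}}}{\LocalIndex}\eqfinp
\end{equation*}
The idea is then to perform the change of variables \( \primal^{(\LocalIndex)} = \TripleNorm{\primal}\,q^{(\LocalIndex)} \). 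Since each \( \SupportDualNorm{\TripleNorm{\cdot}}{\LocalIndex} \) is positively homogeneous of degree~$1$, the constraint \( \sum_{\LocalIndex} q^{(\LocalIndex)} = \primal/\TripleNorm{\primal} \) becomes \( \sum_{\LocalIndex} \primal^{(\LocalIndex)} = \primal \), the constraint \( \sum_{\LocalIndex} \SupportDualNorm{\TripleNorm{q^{(\LocalIndex)}}}{\LocalIndex} \leq 1 \) becomes \( \sum_{\LocalIndex} \SupportDualNorm{\TripleNorm{\primal^{(\LocalIndex)}}}{\LocalIndex} \leq \TripleNorm{\primal} \), and the objective scales by~$\TripleNorm{\primal}$. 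Dividing by \( \TripleNorm{\primal}>0 \) yields~\eqref{eq:Variational_formulation_for_the_pseudo_norm}.

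It remains to identify an explicit minimizer when \( l=\lzero\np{\primal}\geq 1 \). Take \( \primal^{(l)} = \primal \) and \( \primal^{(j)} = 0 \) for \( j\neq l \); admissibility of the sum constraint is immediate. For the norm constraint, I would use the observation made at the beginning of the proof of Proposition~\ref{pr:nonempty_subdifferential}: under orthant-monotonicity, \( \SupportDualNorm{\TripleNorm{\cdot}}{\LocalIndex} = \CoordinateNorm{\TripleNorm{\cdot}}{\LocalIndex} \), and it is recalled (from \cite[Equation~(25a)]{Chancelier-DeLara:2020_CAPRA_OPTIMIZATION}) that \( \CoordinateNorm{\TripleNorm{\primal}}{\LocalIndex}=\TripleNorm{\primal} \) as soon as \( \lzero\np{\primal}\leq\LocalIndex \). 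Hence \( \SupportDualNorm{\TripleNorm{\primal}}{l} = \TripleNorm{\primal} \), so the candidate is feasible, and its cost equals \( \varphi\np{l}\TripleNorm{\primal} \); dividing by \( \TripleNorm{\primal} \) yields \( \varphi\np{l}=\varphi\bp{\lzero\np{\primal}} \), which matches the value already established on the left-hand side, so the infimum is attained at this point.

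The only point that requires some care — and the step I would expect to need the cleanest writing — is the homogeneity-based change of variables, since one must check that it is a bijection between the two feasible sets and that the value at the minimizer on one side truly corresponds (through the scaling factor \( \TripleNorm{\primal} \)) to the value on the other side. Nothing deeper is needed: both constraints and the objective are positively homogeneous of degree~$1$ in~\( \np{q^{(1)},\ldots,q^{(d)}} \), so the rescaling is compatible, and the claimed formula is a direct transcription of~\eqref{eq:pseudonormlzero_convex_minimum} evaluated at \( \primal/\TripleNorm{\primal} \).
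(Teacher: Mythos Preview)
Your proposal is correct and follows essentially the same approach as the paper: combine the convex factorization~\eqref{eq:pseudonormlzero_convex_normalization_mapping_bis} with the variational expression~\eqref{eq:pseudonormlzero_convex_minimum}, then check feasibility and optimality of the candidate \( \primal^{(l)}=\primal \), \( \primal^{(j)}=0 \) for \( j\neq l \) via \( \SupportDualNorm{\TripleNorm{\cdot}}{l}=\CoordinateNorm{\TripleNorm{\cdot}}{l} \) and \cite[Equation~(25a)]{Chancelier-DeLara:2020_CAPRA_OPTIMIZATION}. The only cosmetic difference is that you spell out the homogeneity-based change of variables \( \primal^{(\LocalIndex)}=\TripleNorm{\primal}\,q^{(\LocalIndex)} \), which the paper leaves implicit.
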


\begin{proof}
  Equation~\eqref{eq:Variational_formulation_for_the_pseudo_norm} 
  derives from~\eqref{eq:pseudonormlzero_convex_normalization_mapping}
  where we use the expression~\eqref{eq:pseudonormlzero_convex_minimum} 
  for the function \( {\cal L}_{0}^{\varphi} \) in~\eqref{eq:definition_calL_0}.

  Now for the argmin. 
  When \( \lzero\np{\primal} = l \geq 1 \), we have that
  \( \TripleNorm{\primal} = 
  \CoordinateNorm{\TripleNorm{\primal}}{d}
  = \cdots = 
  \CoordinateNorm{\TripleNorm{\primal}}{l} \)
  by \cite[Equation~(25a)]{Chancelier-DeLara:2020_CAPRA_OPTIMIZATION}.
  Now, for any \( k \in \ic{1,d} \), we have 
  \( \CoordinateNorm{\TripleNorm{\cdot}}{k} 
  =
  \SupportDualNorm{\TripleNorm{\cdot}}{k} \)
  by~\eqref{eq:dual_coordinate-k_norm_=_generalized_top-k_norm} in
  Proposition~\ref{pr:dual_coordinate-k_norm_=_generalized_top-k_norm}
  (Appendix~\ref{Coordinate-k_and_dual_coordinate-k_norms}),  
  since the norm $\TripleNorm{\cdot}$ is orthant-strictly monotonic,
  hence is orthant-monotonic.
  As a consequence, we have that 
  \( \TripleNorm{\primal} =
  \SupportDualNorm{\TripleNorm{\primal}}{d} 
  = \cdots = 
  \SupportDualNorm{\TripleNorm{\primal}}{l} \).
  Therefore, the vectors \( {\primal}^{(1)} \in \RR^d \), \ldots,
  \( {\primal}^{(d)} \in \RR^d \) defined by 
  \( {\primal}^{(j)}=0 \) for \( j \neq l \) and \( {\primal}^{(l)}=\primal \)
  are admissible for the minimization
  problem~\eqref{eq:Variational_formulation_for_the_pseudo_norm}.
  We deduce from~\eqref{eq:Variational_formulation_for_the_pseudo_norm} that 
  \( \varphi\np{l} = \varphi\bp{\lzero\np{\primal}} \leq 
  \frac{ 1 }{ \TripleNorm{\primal} } \varphi\np{l} \SupportDualNorm{\TripleNorm{\primal}}{l}
  =\varphi\np{l} \).
  \medskip

  This ends the proof. 
\end{proof}

As an illustration, Theorem~\ref{th:Variational_formulation_for_the_pseudo_norm} applies 
when the norm~$\TripleNorm{\cdot}$ is any of the 
$\ell_p$-norms~$\Norm{\cdot}_{p}$ on the space~\( \RR^d \), 
for $p\in ]1,\infty[$,
and Equation~\eqref{eq:Variational_formulation_for_the_pseudo_norm}
then gives (see the notations in the second column of
Table~\ref{tab:Examples}):
\( \forall \primal \in \RR^d\setminus\{0\}
\eqsepv \forall p\in ]1,\infty[ \), 
\begin{equation}
  % \begin{split}
  \np{ \varphi \circ \lzero }\np{\primal} 
  =
  \frac{ 1 }{ \Norm{\primal}_{p} } 
  \min_{ \substack{%
      {\primal}^{(1)} \in \RR^d, \ldots, {\primal}^{(d)} \in \RR^d 
      \\
      \sum_{ \LocalIndex=1 }^{ d } \Norm{{\primal}^{(\LocalIndex)}}_{p,\LocalIndex}^{\mathrm{sn}} \leq \Norm{\primal}_{p}
      \\
      \sum_{ \LocalIndex=1 }^{ d } {\primal}^{(\LocalIndex)} = \primal  } }
  \sum_{ \LocalIndex=1 }^{ d } \varphi\np{\LocalIndex} \Norm{{\primal}^{(\LocalIndex)}}_{p,\LocalIndex}^{\mathrm{sn}} 
  % \eqsepv
  % \\
  % \forall \primal \in \RR^d\setminus\{0\}
  % \eqsepv \forall p\in ]1,\infty[
  \eqfinp 
  \label{eq:Variational_formulation_for_the_pseudo_norm_ell_p}
  % \end{split}
\end{equation}
Indeed, when $p\in ]1,\infty[$, 
the $\ell_p$-norm $\TripleNorm{\cdot}=\Norm{\cdot}_{p}$
is orthant-strictly monotonic, and so is its dual norm
$\TripleNormDual{\cdot}=\Norm{\cdot}_q$ where \( 1/p + 1/q = 1 \).
When $p=1$, the $\ell_1$-norm $\TripleNorm{\cdot}=\Norm{\cdot}_{1}$
is orthant-strictly monotonic, 
but the dual norm $\TripleNorm{\cdot}=\Norm{\cdot}_{\infty}$ is not;
when $p=\infty$, the $\ell_{\infty}$-norm $\TripleNorm{\cdot}=\Norm{\cdot}_{\infty}$
is not orthant-strictly monotonic;
hence, in those two extreme cases, we cannot conclude
(but we obtain inequalities like in
\cite[Equation~(25a)]{Chancelier-DeLara:2020_CAPRA_OPTIMIZATION}).

Finally, with the novel
expression~\eqref{eq:Variational_formulation_for_the_pseudo_norm}
for the \lzeropseudonorm, we deduce a possible reformulation of exact sparse
optimization problems as follows (the proof is a straightforward application of
Theorem~\ref{th:Variational_formulation_for_the_pseudo_norm}).

\begin{proposition}%[Minimization of the pseudonorm~$\lzero$ under constraints]
  Let $\Convex \subset \RR^d$ be such that \( 0 \not\in \Convex \)
(if we had \( 0 \in \Convex \), the minimization problem below would
obviously be achieved at~\( \primal=0 \)).
  Let $\TripleNorm{\cdot}$ be a norm on~$\RR^d$, such that 
  both the norm $\TripleNorm{\cdot}$ and the dual norm $\TripleNormDual{\cdot}$
  are orthant-strictly monotonic.
  Let \( \varphi : \ic{0,d} \to \RR_+ \) be a nondecreasing function,
  such that \( \varphi\np{0}=0 \). 
  Then, we have that 
  \begin{subequations}
    \begin{align}
      \min_{ \primal \in \Convex } \varphi\bp{ \lzero\np{\primal} }
      &= 
        \min_{ \substack{%
        \primal \in \Convex, \primal^{(1)} \in \RR^d, \ldots, \primal^{(d)} \in \RR^d 
      \\
      \sum_{ \LocalIndex=1 }^{ d } \SupportDualNorm{\TripleNorm{\primal^{(\LocalIndex)}}}{\LocalIndex} \leq 1 
      \\
      \sum_{ \LocalIndex=1 }^{ d } \primal^{(\LocalIndex)} = \frac{ \primal }{ \TripleNorm{\primal} } } }
      \sum_{ \LocalIndex=1 }^{ d } \varphi\np{\LocalIndex} \SupportDualNorm{\TripleNorm{\primal^{(\LocalIndex)}}}{\LocalIndex} 
      \eqfinv
      \\
      &= 
        \min_{ \substack{%
        \primal \in \Convex, {\primal}^{(1)} \in \RR^d, \ldots, {\primal}^{(d)} \in \RR^d 
      \\
      \sum_{ \LocalIndex=1 }^{ d } \SupportDualNorm{\TripleNorm{{\primal}^{(\LocalIndex)}}}{\LocalIndex} \leq \TripleNorm{\primal}
      \\
      \sum_{ \LocalIndex=1 }^{ d } {\primal}^{(\LocalIndex)} = \primal  } }
      \frac{ 1 }{ \TripleNorm{\primal} } 
      \sum_{ \LocalIndex=1 }^{ d } \varphi\np{\LocalIndex} \SupportDualNorm{\TripleNorm{{\primal}^{(\LocalIndex)}}}{\LocalIndex} 
      \eqfinv
      \\
      &= 
        \min_{ \primal \in \Convex}
        \frac{ 1 }{ \TripleNorm{\primal} } 
        \underbrace{%
        \min_{ \substack{%
        {\primal}^{(1)} \in \RR^d, \ldots, {\primal}^{(d)} \in \RR^d 
      \\
      \sum_{ \LocalIndex=1 }^{ d } \SupportDualNorm{\TripleNorm{{\primal}^{(\LocalIndex)}}}{\LocalIndex} \leq \TripleNorm{\primal}
      \\
      \sum_{ \LocalIndex=1 }^{ d } {\primal}^{(\LocalIndex)} = \primal  } }
      \sum_{ \LocalIndex=1 }^{ d } \varphi\np{\LocalIndex} \SupportDualNorm{\TripleNorm{{\primal}^{(\LocalIndex)}}}{\LocalIndex} 
      }_{\textrm{convex optimization problem}}
      \eqfinp
    \end{align}
  \end{subequations}
  \label{pr:reformulation}
\end{proposition}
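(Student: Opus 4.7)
The plan is to invoke Theorem~\ref{th:Variational_formulation_for_the_pseudo_norm} pointwise on $\Convex$ and then commute the resulting minimum with the outer minimum over $\primal \in \Convex$. Since $0 \notin \Convex$ by hypothesis, every $\primal \in \Convex$ satisfies $\TripleNorm{\primal} > 0$, so the variational identity~\eqref{eq:Variational_formulation_for_the_pseudo_norm} applies at each such $\primal$ and expresses $\varphi(\lzero(\primal))$ as $1/\TripleNorm{\primal}$ times a minimum over $d$-tuples $({\primal}^{(1)}, \ldots, {\primal}^{(d)})$ satisfying $\sum_\LocalIndex {\primal}^{(\LocalIndex)} = \primal$ and $\sum_\LocalIndex \SupportDualNorm{\TripleNorm{{\primal}^{(\LocalIndex)}}}{\LocalIndex} \leq \TripleNorm{\primal}$. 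Substituting this into $\min_{\primal \in \Convex} \varphi(\lzero(\primal))$ and viewing the joint minimization over $(\primal, {\primal}^{(1)}, \ldots, {\primal}^{(d)})$ as a single infimum immediately yields equality~(b). Equality~(c) follows from the same substitution but keeping the inner minimum nested, so as to highlight that, for each fixed $\primal$, the inner problem is a convex optimization problem (minimization of a sum of convex positively homogeneous functions over a polyhedral linear subspace intersected with a convex sublevel constraint).

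For equality~(a), I perform the positive rescaling $\primal^{(\LocalIndex)} = {\primal}^{(\LocalIndex)} / \TripleNorm{\primal}$ in the decomposition appearing in~(b). Because each generalized $k$-support dual~norm $\SupportDualNorm{\TripleNorm{\cdot}}{\LocalIndex}$ is positively homogeneous of degree one, the constraint $\sum_\LocalIndex \SupportDualNorm{\TripleNorm{{\primal}^{(\LocalIndex)}}}{\LocalIndex} \leq \TripleNorm{\primal}$ becomes $\sum_\LocalIndex \SupportDualNorm{\TripleNorm{\primal^{(\LocalIndex)}}}{\LocalIndex} \leq 1$, the sum constraint becomes $\sum_\LocalIndex \primal^{(\LocalIndex)} = \primal / \TripleNorm{\primal}$, and the objective $\tfrac{1}{\TripleNorm{\primal}} \sum_\LocalIndex \varphi(\LocalIndex) \SupportDualNorm{\TripleNorm{{\primal}^{(\LocalIndex)}}}{\LocalIndex}$ becomes $\sum_\LocalIndex \varphi(\LocalIndex) \SupportDualNorm{\TripleNorm{\primal^{(\LocalIndex)}}}{\LocalIndex}$. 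This establishes~(a).

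There is no substantive obstacle: as the authors indicate, the proposition is essentially a bookkeeping exercise once Theorem~\ref{th:Variational_formulation_for_the_pseudo_norm} is available. The only point worth recording explicitly in the write-up is the justification that $\TripleNorm{\primal} > 0$ everywhere on $\Convex$, which is exactly the reason the hypothesis $0 \notin \Convex$ appears separately in the statement; this is what legitimizes both the pointwise use of the variational identity and the homogeneity-based change of variables.
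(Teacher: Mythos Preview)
Your proposal is correct and follows essentially the same approach as the paper, which simply states that the result is ``a straightforward application of Theorem~\ref{th:Variational_formulation_for_the_pseudo_norm}''. Your write-up is in fact more detailed than the paper's own treatment: you spell out the commutation of minima, the positive-homogeneity change of variables for equality~(a), and the role of the hypothesis $0\notin\Convex$.
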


\section{Conclusion}

In this paper, we have proven that the \lzeropseudonorm\ is equal to its
\capra-biconjugate 
when both the source norm and its dual norm are orthant-strictly monotonic.
In that case, one says that the \lzeropseudonorm\
is a \capra-convex function. 
A surprising consequence is the convex factorization property,
a way to express hidden convexity:
the \lzeropseudonorm\ coincides, 
on the unit sphere of the source norm, with a proper convex lsc function.
More generally, this holds true for any function of the \lzeropseudonorm\ 
that is nondecreasing, with finite values.
Then, we have obtained exact variational formulations for the
\lzeropseudonorm, suitable for exact sparse optimization.
For this purpose, we have introduced
sequences of generalized top-$k$ and $k$-support dual~norms.
We now briefly sketch a few perspectives for exact sparse optimization.

The reformulations for exact sparse optimization problems,
obtained in Proposition~\ref{pr:reformulation},
have the nice feature to display partial convexity.
However, they make use of as many new (latent) vectors 
as the underlying dimension~$d$.
Thus, the algorithmic implementation may be delicate.
However, the variational formulation obtained
may suggest approximations of the \lzeropseudonorm,
involving generalized $k$-support dual~norms, which, themselves,
may lead to new smooth sparsity inducing terms.
Finally, we have identified elements of the \capra-subdifferential of nondecreasing
functions of the \lzeropseudonorm,
and we have related this \capra-subdifferential with the Rockafellar-Moreau subdifferential of the
associated convex lsc function (in the convex factorization property).
The identification of such subgradients could inspire ``gradient-like'' algorithms.
\bigskip

\textbf{Acknowledgements.}
We thank 
Guillaume Obozinski
for discussions on first versions of this work,
and Jean-Baptiste Hiriart-Urruty for his comments
(and for proposing the term ``convex factorization'').
We are indebted to two Reviewers and to the Editor who, by their questions and
comments, helped us improve the manuscript. 

\appendix

\section{Properties of relevant norms for the \lzeropseudonorm}
\label{Properties_of_relevant_norms_for_the_lzeropseudonorm}

We provide background on properties of norms that prove 
relevant for the \lzeropseudonorm.
In~\S\ref{Dual_norm_duality_normal_cone_exposed_face},
we review notions related to dual norms.
We establish properties of orthant-monotonic 
and orthant-strictly monotonic norms
in~\S\ref{Properties_of_Orthant-strictly_monotonic_norms},
and of coordinate-$k$ and dual coordinate-$k$ norms
in~\S\ref{Coordinate-k_and_dual_coordinate-k_norms}.

\subsection{Dual norm, \( \TripleNorm{\cdot} \)-duality, normal cone}
\label{Dual_norm_duality_normal_cone_exposed_face} 

For any norm~$\TripleNorm{\cdot}$ on~$\RR^d$, we recall that the following expression 
\begin{equation}
  \TripleNorm{\dual}_\star = 
  \sup_{ \TripleNorm{\primal} \leq 1 } \nscal{\primal}{\dual} 
  \eqsepv \forall \dual \in \RR^d
  \label{eq:dual_norm}
\end{equation}
defines a norm on~$\RR^d$, 
called the \emph{dual norm} \( \TripleNormDual{\cdot} \)
(in \cite[Section~15]{Rockafellar:1970}, this operation is widened to a polarity
operation between closed gauges).
\begin{subequations}
  By definition of the dual norm in~\eqref{eq:dual_norm}, we have 
  the inequality
  \begin{equation}
    \nscal{\primal}{\dual} \leq 
    \TripleNorm{\primal} \times \TripleNormDual{\dual} 
    \eqsepv \forall  \np{\primal,\dual} \in \RR^d \times \RR^d 
    \eqfinp 
    \label{eq:norm_dual_norm_inequality}
  \end{equation}
  We are interested in the case where this inequality is an equality.
  One says that \( \dual \in \RR^d \) is 
  \emph{\( \TripleNorm{\cdot} \)-dual} to \( \primal \in \RR^d \),
  denoted by \( \dual \parallel_{\TripleNorm{\cdot}} \primal \),
  if equality holds in inequality~\eqref{eq:norm_dual_norm_inequality},
  that is,
  \begin{equation}
    \dual \parallel_{\TripleNorm{\cdot}} \primal
    \iff
    \nscal{\primal}{\dual} =
    \TripleNorm{\primal} \times \TripleNormDual{\dual} 
    \eqfinp 
    \label{eq:couple_TripleNorm-dual}
  \end{equation}
\end{subequations}
The terminology \( \TripleNorm{\cdot} \)-dual comes from
\cite[page~2]{Marques_de_Sa-Sodupe:1993}
(see also the vocable of \emph{dual vector pair} in \cite[Equation~(1.11)]{Gries:1967}
and of \emph{dual vectors} in \cite[p.~283]{Gries-Stoer:1967}, 
whereas it is refered as \emph{polar alignment}
in~\cite{Fan-Jeong-Sun-Friedlander:2020}).
It will be convenient to express this notion of \( \TripleNorm{\cdot} \)-duality
in terms of geometric objects of convex analysis.
For this purpose,
we recall that the \emph{normal cone}~$\NORMAL_{\Convex}(\primal)$ 
to the (nonempty) closed convex subset~${\Convex} \subset \RR^d $
at~$\primal \in \Convex$ is the closed convex cone defined by 
\cite[p.136]{Hiriart-Urruty-Lemarechal-I:1993}
\begin{equation}
  \NORMAL_{\Convex}(\primal) =
  \bset{ \dual \in \RR^d}
  {
    \nscal{\dual}{\primal'-\primal} \leq 0 \eqsepv 
    \forall \primal' \in \Convex }
  \eqfinp
  \label{eq:normal_cone}
\end{equation}
Now, an easy computation shows that the notion
% Now, easy computations show that the notion
of \( \TripleNorm{\cdot} \)-duality can be rewritten in terms of
% normal cones~\( \NORMAL_{ \TripleNormBall } \) and \( \NORMAL_{ \TripleNormDualBall } \)
normal cone~\( \NORMAL_{ \TripleNormBall } \) 
as follows: 
\begin{equation}
  % \Bgp{ 
  \dual \parallel_{\TripleNorm{\cdot}} \primal
  \iff
  \dual \in \NORMAL_{ \TripleNormBall }\bgp{ \frac{ \primal }{\TripleNorm{\primal} } }
  % \iff
  % \primal \in \NORMAL_{ \TripleNormDualBall }\bgp{ \frac{ \dual }{
  % \TripleNorm{\dual} } }   }
  \eqsepv \forall  \np{\primal,\dual} 
  \in \bp{ \RR^d\setminus\{0\} } \times \RR^d %\setminus\{0\} 
  \eqfinp 
  \label{eq:couple_TripleNorm-dual_and_normal_cone} 
\end{equation}

\subsection{Properties of orthant-strictly monotonic norms}
\label{Properties_of_Orthant-strictly_monotonic_norms}

We provide useful properties of orthant-monotonic 
and orthant-strictly monotonic norms (see
Definition~\ref{de:orthant-monotonic}).
We recall that \( \primal_K \in \FlatRR_{K} \) denotes the vector which coincides with~\( \primal \),
except for the components outside of~$K$ that vanish, and that
the subspace~\( \FlatRR_{K} \) of~\( \RR^d \) has been defined
in~\eqref{eq:FlatRR}.

\begin{proposition}%(\cite[Proposition~\ref{OSM-pr:orthant-monotonic}]{Chancelier-DeLara:2020_OSM},
  % \cite[Characterization~2.26]{Gries:1967},
  % \cite[Theorem 3.2]{Marques_de_Sa-Sodupe:1993})   
  Let $\TripleNorm{\cdot}$ be an orthant-monotonic norm on~$\RR^d$.
  Then, the dual norm $\TripleNormDual{\cdot}$ is orthant-monotonic,
  and the norm $\TripleNorm{\cdot}$ is \emph{increasing with the coordinate subspaces},
  in the sense that, for any \( \primal \in \RR^d \)
  and any \( J \subset K \subset\ic{1,d} \),
  we have $ \TripleNorm{\primal_{J}} \leq \TripleNorm{\primal_K}$.
  \label{pr:orthant-monotonic}
\end{proposition}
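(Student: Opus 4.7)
For the second claim, that $\TripleNorm{\cdot}$ is increasing along coordinate subspaces, the plan is an immediate application of the definition. Given $J \subset K \subset \ic{1,d}$, the vectors $\primal_J$ and $\primal_K$ agree on $J$ and vanish outside $K$, while $\primal_J$ additionally vanishes on $K \setminus J$. A direct check then shows $|\primal_J| \leq |\primal_K|$ componentwise and $\primal_J \circ \primal_K \geq 0$ (the only nonzero entries of the Hadamard product are the squares $\primal_i^2$ on $J$). Orthant-monotonicity of $\TripleNorm{\cdot}$ then delivers $\TripleNorm{\primal_J} \leq \TripleNorm{\primal_K}$.

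For the dual claim, the plan is as follows. Fix $\dual,\dual' \in \RR^d$ with $|\dual| \leq |\dual'|$ and $\dual \circ \dual' \geq 0$, and select a signature $\epsilon \in \{-1,+1\}^d$ so that both $\dual$ and $\dual'$ lie in the closed orthant $O_\epsilon = \{v \in \RR^d : \epsilon_i v_i \geq 0, \forall i\}$. This choice is possible precisely because $\dual \circ \dual' \geq 0$: when $\dual'_i \neq 0$ set $\epsilon_i$ to match its sign (noting that $|\dual_i| \leq |\dual'_i|$ together with $\dual_i \dual'_i \geq 0$ makes $\dual_i$ compatible), and when $\dual'_i = 0$ forces $\dual_i = 0$ as well, $\epsilon_i$ is free. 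By compactness of the unit ball, pick $\primal$ attaining $\nscal{\primal}{\dual} = \TripleNormDual{\dual}$ with $\TripleNorm{\primal} \leq 1$, and truncate it onto the chosen orthant by setting $\primal^+_i = \primal_i$ when $\epsilon_i \primal_i > 0$ and $\primal^+_i = 0$ otherwise.

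The argument then rests on three short verifications, to be carried out in order. First, $|\primal^+| \leq |\primal|$ and $\primal^+ \circ \primal \geq 0$, so orthant-monotonicity of $\TripleNorm{\cdot}$ applied to the pair $(\primal^+,\primal)$ gives $\TripleNorm{\primal^+} \leq \TripleNorm{\primal} \leq 1$, making $\primal^+$ admissible in the supremum defining $\TripleNormDual{\dual'}$. Second, $\nscal{\primal^+}{\dual} \geq \nscal{\primal}{\dual}$, because the discarded terms, those with $\epsilon_i \primal_i \leq 0$, satisfy $\primal_i \dual_i = (\epsilon_i \primal_i)(\epsilon_i \dual_i) \leq 0$. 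Third, since $\primal^+$, $\dual$ and $\dual'$ all lie in $O_\epsilon$, one has $\primal^+_i \dual_i = |\primal^+_i|\,|\dual_i|$ and $\primal^+_i \dual'_i = |\primal^+_i|\,|\dual'_i|$ entrywise, so $|\dual| \leq |\dual'|$ yields $\nscal{\primal^+}{\dual'} \geq \nscal{\primal^+}{\dual}$. Chaining the three inequalities gives $\TripleNormDual{\dual'} \geq \nscal{\primal^+}{\dual'} \geq \nscal{\primal}{\dual} = \TripleNormDual{\dual}$.

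The main obstacle, and the reason a naive ``take absolute values'' approach fails, is that orthant-monotonicity is strictly weaker than plain monotonicity: in general $\TripleNorm{|\primal|} \neq \TripleNorm{\primal}$, and sign reflections $\primal \mapsto (\epsilon_1 \primal_1,\ldots,\epsilon_d \primal_d)$ need not preserve $\TripleNorm{\cdot}$. The critical design choice is therefore to stay inside the single orthant $O_\epsilon$ distinguished by the pair $(\dual,\dual')$, and to move $\primal$ into that orthant by orthant-preserving truncation rather than by reflection; this is exactly the operation whose Hadamard-product compatibility triggers the defining implication of orthant-monotonicity on the primal side.
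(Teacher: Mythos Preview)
Your proof is correct, but the route differs from the paper's. The paper proves neither claim from scratch: it cites \cite[Theorem~2.23]{Gries:1967} for orthant-monotonicity of the dual norm, and cites \cite[Proposition~2.4]{Marques_de_Sa-Sodupe:1993} (the inequality $\TripleNorm{u} \leq \TripleNorm{u+v}$ for $u \in \FlatRR_J$, $v \in \FlatRR_{-J}$) for the coordinate-subspace monotonicity, then specializes with $u=\primal_J$ and $v=\primal_K-\primal_J$. By contrast, you give self-contained arguments for both. For the coordinate-subspace claim your direct verification of $|\primal_J|\leq|\primal_K|$ and $\primal_J\circ\primal_K\geq 0$ is in fact more economical than the detour through the cited proposition. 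For the dual claim, your orthant-truncation argument --- choosing a common orthant~$O_\epsilon$ for $\dual$ and $\dual'$, zeroing the coordinates of a maximizer~$\primal$ that fall outside~$O_\epsilon$, and chaining the three resulting inequalities --- is essentially a clean reproof of the cited Gries result, and your closing remark correctly identifies why reflection $\primal\mapsto\epsilon\circ\primal$ would not work here while truncation does.
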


\begin{proof}
  Let $\TripleNorm{\cdot}$ be an orthant-monotonic norm on~$\RR^d$.
  Then, by \cite[Theorem~2.23]{Gries:1967},
  the dual norm $\TripleNormDual{\cdot}$ is also orthant-monotonic and,
  by \cite[Proposition~2.4]{Marques_de_Sa-Sodupe:1993},
  we have that 
  % \begin{equation}
  %   \TripleNorm{u + v} \geq \TripleNorm{u}
  %   \eqsepv \forall u \in \FlatRR_{K}
  %   \eqsepv \forall v \in \FlatRR_{-K} 
  %   \eqfinp 
  %     %   \label{it:Birkhoff_orthogonal_IN_pr:orthant-monotonic}
  % \end{equation}
  % % 
  \( \TripleNorm{u} \leq \TripleNorm{u + v} \),
for any subset \( J \subset \ic{1,d} \) and 
  for any vectors \( u \in \FlatRR_{J} \) and \( v \in \FlatRR_{-J} \)
  (following notation from game theory, 
  we have denoted by $-J$ the complementary subset of $J \subset \ic{1,d}$,
  that is, \( J \cup (-J) = \ic{1,d} \)     and \( J \cap (-J) = \emptyset \)).
  We consider \( \primal \in \RR^d \)
  and \( J \subset K \subset\ic{1,d} \).
  By setting \( u=\primal_{J} \in \FlatRR_{J} \)
  and \( v = \primal_K-\primal_{J} \), we get that 
  \( v \in \FlatRR_{-J} \), hence that $ \TripleNorm{\primal_{J}} \leq \TripleNorm{\primal_K}$.
\end{proof}

\begin{proposition}%(\cite[Proposition~\ref{OSM-pr:orthant-strictly_monotonic}]{Chancelier-DeLara:2020_OSM})
  Let $\TripleNorm{\cdot}$ be an orthant-strictly monotonic norm on~$\RR^d$.
  Then
  \begin{enumerate}[label=({\bf\emph{\alph*}})]
  \item
    \label{it:SICS}
    the norm $\TripleNorm{\cdot}$ is \emph{strictly increasing
      with the coordinate subspaces} 
    in the sense that,
    % \footnote{%
    % By \( J \subsetneq K \), we mean that  \( J \subset K \) and \( J \neq K \). },
    for any \( \primal \in \RR^d \)
    and any \( J \subsetneq K \subset\ic{1,d} \),
    we have $ \primal_J \neq \primal_K \Rightarrow
    \TripleNorm{\primal_{J}} < \TripleNorm{\primal_K}$.
  \item
    \label{it:SDC}
    for any vector \( u \in \RR^d\setminus\{0\} \), 
    there exists a vector \( v \in \RR^d\setminus\{0\} \)
    such that \( \Support{v} = \Support{u} \),
    that \( u~\circ~v \geq 0 \), and that 
    $v$ is \( \TripleNorm{\cdot} \)-dual to~$u$, that is,
    \( \nscal{u}{v} = \TripleNorm{u} \times \TripleNormDual{v} \). 
  \end{enumerate}
  \label{pr:orthant-strictly_monotonic}
\end{proposition}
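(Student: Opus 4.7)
For part~\ref{it:SICS}, a direct appeal to Definition~\ref{de:orthant-monotonic} will suffice. Given \( J \subsetneq K \) and \( \primal_J \neq \primal_K \), the vectors \( \primal_J \) and \( \primal_K \) agree on~\( J \) and both vanish outside~\( K \), while on \( K \setminus J \) the first vanishes and the second equals \( \primal \) there; hence \( \module{\primal_J} \leq \module{\primal_K} \) coordinate-wise, \( \primal_J \circ \primal_K \geq 0 \) coordinate-wise, and since \( \primal_J \neq \primal_K \) there must exist some \( j \in K \setminus J \) with \( \primal_j \neq 0 \), giving strict inequality at~\( j \). Orthant-strict monotonicity then yields \( \TripleNorm{\primal_J} < \TripleNorm{\primal_K} \).

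For part~\ref{it:SDC}, the plan is to work inside the subspace \( \FlatRR_{L} \) with \( L = \Support{u} \) and to exploit the fact that, on this subspace, the dual norm \( \TripleNormDual{\cdot} \) coincides with the restriction-dual norm \( \TripleNorm{\cdot}_{L,\star} \). The inequality \( \TripleNormDual{v} \geq \TripleNorm{v}_{L,\star} \) for \( v \in \FlatRR_L \) is standard, and the reverse inequality will follow from orthant-monotonicity of \( \TripleNorm{\cdot} \) via \( \TripleNorm{w_L} \leq \TripleNorm{w} \) combined with \( \nscal{w}{v} = \nscal{w_L}{v} \) when \( v \in \FlatRR_L \). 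By attainment in the dual-norm identity on the finite-dimensional space \( \FlatRR_L \), there exists some \( v_0 \in \FlatRR_L \) with \( \TripleNorm{v_0}_{L,\star} = \TripleNormDual{v_0} = 1 \) and \( \nscal{u}{v_0} = \TripleNorm{u}_L = \TripleNorm{u} \). Introducing the exposed face \( V^{*} = \bset{v \in \RR^d}{\TripleNormDual{v} \leq 1,\ \nscal{u}{v} = \TripleNorm{u}} \), we have \( V^{*} \cap \FlatRR_L \neq \emptyset \).

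Two properties of \( V^{*} \) are then established. First, every \( v \in V^{*} \) will satisfy \( u \circ v \geq 0 \): otherwise, if \( u_{i_0} v_{i_0} < 0 \), the vector \( v^{+} \) obtained from \( v \) by zeroing its \( i_0 \)-th coordinate satisfies \( \module{v^{+}} \leq \module{v} \) and \( v^{+} \circ v \geq 0 \), so that \( \TripleNormDual{v^{+}} \leq \TripleNormDual{v} \leq 1 \) by orthant-monotonicity of \( \TripleNormDual{\cdot} \) (Proposition~\ref{pr:orthant-monotonic}), while \( \nscal{u}{v^{+}} = \nscal{u}{v} - u_{i_0} v_{i_0} > \TripleNorm{u} \), contradicting the supremum definition of \( \TripleNorm{u} \). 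Second, for every \( j_0 \in L \) there will exist \( v^{(j_0)} \in V^{*} \cap \FlatRR_L \) with \( v^{(j_0)}_{j_0} \neq 0 \): otherwise every such vector would lie in \( \FlatRR_{L \setminus \{j_0\}} \), which would give \( \TripleNorm{u} = \nscal{u}{v} = \nscal{u_{L \setminus \{j_0\}}}{v} \leq \TripleNorm{u_{L \setminus \{j_0\}}} \TripleNormDual{v} = \TripleNorm{u_{L \setminus \{j_0\}}} \), in direct contradiction with part~\ref{it:SICS} applied to \( L \setminus \{j_0\} \subsetneq L \).

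The final step is a convex combination: by convexity of \( V^{*} \cap \FlatRR_L \), the average \( v = \frac{1}{\cardinal{L}} \sum_{j \in L} v^{(j)} \) still belongs to \( V^{*} \cap \FlatRR_L \). From the sign consistency \( u \circ v^{(j)} \geq 0 \), each coordinate \( v^{(j)}_i \) has the sign of \( u_i \) or vanishes, so no cancellation across the sum can occur; in particular, for each \( i \in L \) the summand \( v^{(i)}_i \) is nonzero with the sign of \( u_i \), forcing \( v_i \neq 0 \). Hence \( \Support{v} = L = \Support{u} \), \( u \circ v \geq 0 \), and \( v \in V^{*} \) is \( \TripleNorm{\cdot} \)-dual to \( u \). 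The main obstacle in this plan is the second property of \( V^{*} \) above: it is precisely there that the full strength of orthant-strict monotonicity, accessed through part~\ref{it:SICS}, is crucial to guarantee that the exposed face \( V^{*} \cap \FlatRR_L \) is not confined to a proper coordinate subspace of \( \FlatRR_L \).
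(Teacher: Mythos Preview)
Your proof is correct, and for part~\ref{it:SICS} it is essentially the paper's argument, only more direct: the paper introduces the auxiliary decomposition \( u=\primal_J \), \( v=\primal_K-\primal_J \) before arriving at the same coordinate-wise comparison you write down immediately.

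For part~\ref{it:SDC}, your route is genuinely different from the paper's. The paper invokes an external result (\cite[Proposition~2.4]{Marques_de_Sa-Sodupe:1993}) which, for an orthant-monotonic norm, already supplies a \( \TripleNorm{\cdot} \)-dual vector~\( v \) with \( \Support{v}\subset\Support{u} \) and \( u\circ v\ge 0 \); it then upgrades the inclusion to equality by a short contradiction using part~\ref{it:SICS} (if \( \Support{v}=J\subsetneq L \), both \( u_L \) and \( u_J \) would attain the supremum defining \( \TripleNormDual{v} \), forcing \( \TripleNorm{u_J}=\TripleNorm{u_L} \), impossible). You instead work entirely inside the exposed face \( V^{*}\cap\FlatRR_L \): you first establish \( \TripleNorm{\cdot}_{\star,L}=\TripleNorm{\cdot}_{L,\star} \) on~\( \FlatRR_L \) from orthant-monotonicity, then show (i) sign consistency \( u\circ v\ge 0 \) for every \( v\in V^{*} \), (ii) for each \( j_0\in L \) the face is not confined to \( \FlatRR_{L\setminus\{j_0\}} \) (here part~\ref{it:SICS} enters), and finally average the resulting witnesses, using sign consistency to prevent cancellation. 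Your argument is self-contained and constructive, at the cost of being longer; the paper's is shorter but leans on the cited lemma. Both localize the use of strict monotonicity to exactly the same place: ruling out \( \TripleNorm{u}\le\TripleNorm{u_{L\setminus\{j_0\}}} \).
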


\begin{proof}
  \quad

  \ref{it:SICS}  Let \( \primal \in \RR^d \)
  and \( J \subsetneq K \subset\ic{1,d} \)
  be such that $ \primal_J \neq \primal_K $.
  We will show that \( \TripleNorm{\primal_K} > \TripleNorm{\primal_{J}} \).

  For this purpose, we set \( u=\primal_{J} \)
  and \( v = \primal_K-\primal_{J} \).
  Thus, we get that \( u \in \FlatRR_{K} \) and \( v \in
  \FlatRR_{-K}\setminus\{0\} \)
  (since \( J \subsetneq K \) and $ \primal_J \neq \primal_K $),
  that is, \( u=u_{K} \) and \( v=v_{-K}  \neq 0 \). 
  We are going to show that \( \TripleNorm{u + v} > \TripleNorm{u} \). 

  On the one hand, by definition of the module of a vector, 
  we easily see that 
  \( \module{w} = \module{ w_{K} } + \module{ w_{-K} } \),
  for any vector~\( w \in \RR^d \). 
  Thus, we have
  \( \module{u+v} = \module{ \np{u+v}_{K} } + \module{ \np{u+v}_{-K} } 
  = \module{ u_{K}+v_{K} } + \module{ u_{-K}+v_{-K} } 
  = \module{ u_{K}+0 } + \module{ 0+v_{-K} } 
  = \module{ u_{K} } + \module{v_{-K} } > \module{ u_{K} } =\module{ u } \) 
  since \( \module{v_{-K} } >0 \) as \( v=v_{-K} \neq 0 \),
  and since \( u=u_{K} \).  
  On the other hand, we easily get that \( \np{u+v}~\circ~u = 
  \bp{ \np{u+v}_{K}~\circ~u_{K} } + \bp{ \np{u+v}_{-K}~\circ~u_{-K} } = 
  \bp{ u_{K}~\circ~u_{K} } + \bp{ v_{-K}~\circ~u_{-K} } = 
  \bp{ u_{K}~\circ~u_{K} } \), because $u_{-K}=0$. 
  Therefore, we get that 
  \( \np{u+v}~\circ~u = \bp{ u_{K}~\circ~u_{K} } \geq 0 \).

  From \( \module{u+v} > \module{u} \) and 
  \( \np{u+v}~\circ~u \geq 0 \),
  we deduce that \( \TripleNorm{u + v} > \TripleNorm{u} \)
  by Definition~\ref{de:orthant-monotonic}
  as the norm \( \TripleNorm{\cdot} \) is orthant-strictly monotonic.
  Since \( u=\primal_{J} \) and \( v = \primal_K-\primal_{J} \),
  we conclude that
  \( \TripleNorm{\primal_{K}} > \TripleNorm{\primal_J} \).
  \medskip

  \ref{it:SDC}  Let \( u \in \RR^d\setminus\{0\} \) be given 
  and let us put \( K = \Support{u} \neq \emptyset \).
  As the norm \( \TripleNorm{\cdot} \) is orthant-strictly monotonic,
  it is orthant-monotonic; hence, 
  by
  \cite[Proposition~2.4]{Marques_de_Sa-Sodupe:1993},
  there exists a vector \( v \in \RR^d\setminus\{0\} \)
  such that \( \Support{v} \subset \Support{u} \),
  that \( u~\circ~v \geq 0 \) 
  and that $v$ is \( \TripleNorm{\cdot} \)-dual to~$u$,
  as in~\eqref{eq:couple_TripleNorm-dual},
  that is,  \( \nscal{u}{v} = \TripleNorm{u} \times \TripleNormDual{v} \). 
  Thus \( J = \Support{v} \subset K = \Support{u} \).
  We will now show that \( J \subsetneq K  \) is impossible,
  hence that \( J = K  \), thus proving that
  Item~\ref{it:SDC} holds true with the above vector~$v$.
  
  Writing that  \( \nscal{u}{v} = \TripleNorm{u} \times \TripleNormDual{v} \)
  (using that $u=u_K$ and $v=v_K=v_J$), we obtain
  \[
    \TripleNorm{u} \times \TripleNormDual{v} =
    \nscal{u}{v} = \nscal{u_K}{v} = \nscal{u_K}{v_K} = \nscal{u_K}{v_J} = 
    \nscal{u_J}{v_J} = \nscal{u_J}{v} 
    \eqfinv
  \]
  by obvious properties of the scalar product \( \nscal{\cdot}{\cdot} \).
  As a consequence, we get that 
  \( \{u_K,u_J \} \subset \argmax_{ \TripleNorm{\primal} \leq \TripleNorm{u} }
  \nscal{\primal}{v} \), by definition~\eqref{eq:dual_norm} of \(
  \TripleNormDual{v} \),
  because \( \TripleNorm{u}=\TripleNorm{u_K} \geq \TripleNorm{u_J} \),
  by Proposition~\ref{pr:orthant-monotonic}
  since \( J \subset K \) and the norm \( \TripleNorm{\cdot} \) 
  is orthant-monotonic. 
  But any solution in \( \argmax_{ \TripleNorm{\primal} \leq \TripleNorm{u} }
  \nscal{\primal}{v} \) belongs to the frontier of the ball of
  radius~$\TripleNorm{u}$,
  hence has exactly norm~$\TripleNorm{u}$. 
  Thus, we deduce that \( \TripleNorm{u}=\TripleNorm{u_K}= \TripleNorm{u_J} \).
  If we had \( J = \Support{v} \subsetneq K = \Support{u} \), 
  we would have $u_J \neq u_K$, hence 
  \( \TripleNorm{u_K} > \TripleNorm{u_J} \) by Item~\ref{it:SICS};
  this would be in contradiction with \( \TripleNorm{u_K} = \TripleNorm{u_J} \).
  Therefore,  \( J = \Support{v} =K= \Support{u} \). 
  \medskip

  This ends the proof.
\end{proof}

\subsection{Properties of coordinate-$k$ and dual coordinate-$k$ norms, and of generalized top-$k$ and $k$-support dual~norms}
\label{Coordinate-k_and_dual_coordinate-k_norms}

We establish useful properties of
coordinate-$k$ and dual coordinate-$k$ norms
(Definition~\ref{de:coordinate_norm}),
and of generalized top-$k$ and $k$-support dual~norms
(Definition~\ref{de:top_dual_norm}).

\begin{proposition}
  Let $\TripleNorm{\cdot}$ be a source norm on~$\RR^d$.

  Coordinate-$k$ norms are greater than $k$-support dual~norms,
  that is, 
  \begin{subequations}
    \begin{align}
      \CoordinateNorm{\TripleNorm{\primal}}{k} 
      & \geq 
        \SupportDualNorm{\TripleNorm{\primal}}{k}
        \eqsepv \forall \primal \in \RR^d 
        \eqsepv \forall k\in\ic{1,d}
        \eqfinv
        \label{eq:coordinate-k_norm_leq_generalized_k-support_norm}
        \intertext{whereas dual coordinate-$k$ norms
        are lower than generalized top-$k$ dual~norms, that is,}
        \CoordinateNormDual{\TripleNorm{\dual}}{k}
      & \leq 
        \TopDualNorm{\TripleNorm{\dual}}{k}
        \eqsepv \forall \dual \in \RR^d 
        \eqsepv \forall k\in\ic{1,d}
        \eqfinp
        \label{eq:dual_coordinate-k_norm_geq_generalized_top-k_norm}
    \end{align}
  \end{subequations}
  If the source norm norm $\TripleNorm{\cdot}$ is orthant-monotonic,
  then equalities hold true, that is, 
  \begin{equation}
    \TripleNorm{\cdot} \textrm{is orthant-monotonic}
    \Rightarrow
    \forall k\in\ic{1,d} \quad
    \begin{cases}
      \CoordinateNorm{\TripleNorm{\cdot}}{k} 
      \! &= 
      \SupportDualNorm{\TripleNorm{\cdot}}{k}
      \eqfinv 
      \\
      \CoordinateNormDual{\TripleNorm{\cdot}}{k}
      \! &= 
      \TopDualNorm{\TripleNorm{\cdot}}{k}
      \eqfinp 
    \end{cases}
    \label{eq:dual_coordinate-k_norm_=_generalized_top-k_norm}
  \end{equation}
  \label{pr:dual_coordinate-k_norm_=_generalized_top-k_norm}
\end{proposition}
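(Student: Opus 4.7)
\textbf{Proof plan for Proposition~\ref{pr:dual_coordinate-k_norm_=_generalized_top-k_norm}.}
The plan is to reduce everything to a per-subset comparison between the two norms on~$\FlatRR_{K}$ that appear in Definitions~\ref{de:coordinate_norm} and~\ref{de:top_dual_norm}: the norm $\TripleNorm{\cdot}_{K,\star}$ (dual of the restriction) and the norm $\TripleNorm{\cdot}_{\star,K}$ (restriction of the dual). Fixing $K \subset \ic{1,d}$ with $\cardinal{K} \leq k$ and $\dual \in \RR^d$, I would expand
\[
  \TripleNorm{\dual_K}_{K,\star}
  = \sup_{\primal' \in \FlatRR_{K},\, \TripleNorm{\primal'} \leq 1} \nscal{\primal'}{\dual_K}
  \eqsepv
  \TripleNorm{\dual_K}_{\star,K}
  = \TripleNormDual{\dual_K}
  = \sup_{\primal \in \RR^d,\, \TripleNorm{\primal} \leq 1} \nscal{\primal}{\dual_K}
  \eqfinp
\]
Since the first supremum is taken over a subset of the feasible set of the second, the inequality $\TripleNorm{\dual_K}_{K,\star} \leq \TripleNorm{\dual_K}_{\star,K}$ is immediate. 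Taking suprema over $\cardinal{K} \leq k$ in both sides yields~\eqref{eq:dual_coordinate-k_norm_geq_generalized_top-k_norm}.

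Next, to obtain the companion primal inequality~\eqref{eq:coordinate-k_norm_leq_generalized_k-support_norm}, I would invoke duality: by Definition~\ref{de:coordinate_norm}, $\CoordinateNorm{\TripleNorm{\cdot}}{k}$ is the dual norm of $\CoordinateNormDual{\TripleNorm{\cdot}}{k}$, and by~\eqref{eq:support_dual_norm}, $\SupportDualNorm{\TripleNorm{\cdot}}{k}$ is the dual norm of $\TopDualNorm{\TripleNorm{\cdot}}{k}$. The dual-norm operation~\eqref{eq:dual_norm} reverses inequalities between norms (a norm dominated by another has a larger dual unit ball), so~\eqref{eq:dual_coordinate-k_norm_geq_generalized_top-k_norm} forces~\eqref{eq:coordinate-k_norm_leq_generalized_k-support_norm}.

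For the equality case, suppose the source norm $\TripleNorm{\cdot}$ is orthant-monotonic. By Proposition~\ref{pr:orthant-monotonic}, the norm $\TripleNorm{\cdot}$ is then increasing with the coordinate subspaces, so $\TripleNorm{\primal_K} \leq \TripleNorm{\primal}$ for every $\primal \in \RR^d$. Now, for any $\primal$ with $\TripleNorm{\primal} \leq 1$, the identity $\nscal{\primal}{\dual_K} = \nscal{\primal_K}{\dual_K}$ holds (because $\dual_K$ is supported on~$K$), while the vector $\primal_K \in \FlatRR_{K}$ satisfies $\TripleNorm{\primal_K} \leq 1$. Therefore
\[
  \TripleNorm{\dual_K}_{\star,K}
  = \sup_{\TripleNorm{\primal} \leq 1} \nscal{\primal_K}{\dual_K}
  \leq \sup_{\primal' \in \FlatRR_{K},\, \TripleNorm{\primal'} \leq 1} \nscal{\primal'}{\dual_K}
  = \TripleNorm{\dual_K}_{K,\star}
  \eqfinv
\]
which reverses the earlier inequality; taking suprema over $\cardinal{K} \leq k$ gives the equality $\CoordinateNormDual{\TripleNorm{\cdot}}{k} = \TopDualNorm{\TripleNorm{\cdot}}{k}$, and the corresponding primal equality follows again by the bijective duality between norms.

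The only substantive ingredient beyond definitions is the implication "orthant-monotonic $\Rightarrow$ increasing with coordinate subspaces", but this is exactly Proposition~\ref{pr:orthant-monotonic}, proven earlier. Consequently I expect no real obstacle; the main care needed is just to track which of the two candidate $K$-norms is a restriction versus a duality, and to appeal cleanly to the duality reversal when passing from~\eqref{eq:dual_coordinate-k_norm_geq_generalized_top-k_norm} to~\eqref{eq:coordinate-k_norm_leq_generalized_k-support_norm}.
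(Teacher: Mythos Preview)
Your proposal is correct and follows the same skeleton as the paper's proof: establish the per-subset inequality $\TripleNorm{\cdot}_{K,\star} \leq \TripleNorm{\cdot}_{\star,K}$, take suprema over $\cardinal{K}\leq k$, and dualize. The only difference is that the paper cites \cite{Marques_de_Sa-Sodupe:1993} and \cite{Gries:1967} both for the per-subset inequality and for the characterization of orthant-monotonic norms as exactly those with $\TripleNorm{\cdot}_{K,\star} = \TripleNorm{\cdot}_{\star,K}$, whereas you prove the inequality from scratch and derive the reverse inequality directly from Proposition~\ref{pr:orthant-monotonic}; your argument is thus slightly more self-contained but otherwise identical in spirit.
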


\begin{proof}
  It is known that, for any nonempty subset \( K \subset \ic{1,d} \), 
  we have the inequality \( \TripleNorm{\cdot}_{K,\star} 
  \leq \TripleNorm{\cdot}_{\star,K} \)
  (see \cite[Proposition~2.2]{Marques_de_Sa-Sodupe:1993}).
  From the definition~\eqref{eq:top_dual_norm} of the generalized top-$k$
  dual~norm,
  and the definition~\eqref{eq:dual_coordinate_norm_definition} 
  of the dual coordinate-$k$ norm,
  we get that 
  \(   \CoordinateNormDual{\TripleNorm{\dual}}{k}
  = \sup_{\cardinal{K} \leq k} \TripleNorm{\dual_K}_{K,\star}
  \leq
  \sup_{\cardinal{K} \leq k} \TripleNorm{\dual_K}_{\star,K}
  = \TopDualNorm{\TripleNorm{\dual}}{k}
  \), hence we obtain~\eqref{eq:dual_coordinate-k_norm_geq_generalized_top-k_norm}.
  By taking the dual norms, we
  get~\eqref{eq:coordinate-k_norm_leq_generalized_k-support_norm}.

  The norms for which the equality 
  \( \TripleNorm{\cdot}_{K,\star} =
  \TripleNorm{\cdot}_{\star,K} \)
  holds true for all nonempty subsets 
  \( K \subset\ic{1,d} \),
  are the orthant-monotonic norms (\cite[Characterization~2.26]{Gries:1967}, %
  \cite[Theorem~3.2]{Marques_de_Sa-Sodupe:1993}).
  Therefore, if the norm~$\TripleNorm{\cdot}$ is orthant-monotonic,
  from the definition~\eqref{eq:top_dual_norm} of the generalized top-$k$
  dual~norm, we get that the inequality~\eqref{eq:dual_coordinate-k_norm_geq_generalized_top-k_norm}
  becomes an equality. Then, the
  inequality~\eqref{eq:coordinate-k_norm_leq_generalized_k-support_norm}
  also becomes an equality by taking the dual norm as in~\eqref{eq:dual_norm}.
  Thus, we have
  obtained~\eqref{eq:dual_coordinate-k_norm_=_generalized_top-k_norm}. 

  This ends the proof.
\end{proof}

\begin{proposition}
  Let $\TripleNorm{\cdot}$ be a source norm on~$\RR^d$.
  Let \( \dual\in\RR^d \) and \( l\in\ic{1,d} \).
  If the dual norm $\TripleNormDual{\cdot}$ is orthant-strictly monotonic,
  we have that
  \begin{equation}
    \lzero\np{\dual} = l \implies
    \begin{cases}
      \CoordinateNormDual{\TripleNorm{\dual}}{1} < \cdots < % \leq \cdots \leq 
      \CoordinateNormDual{\TripleNorm{\dual}}{l-1}  < 
      \CoordinateNormDual{\TripleNorm{\dual}}{l} =
      \cdots = \CoordinateNormDual{\TripleNorm{\dual}}{d} =\TripleNormDual{\dual}
      \eqfinv
      \\
      \TopDualNorm{\TripleNorm{\dual}}{1} < \cdots < % \leq \cdots \leq 
      \TopDualNorm{\TripleNorm{\dual}}{l-1}  < 
      \TopDualNorm{\TripleNorm{\dual}}{l} =
      \cdots = \TopDualNorm{\TripleNorm{\dual}}{d} =\TripleNormDual{\dual}
      \eqfinp
    \end{cases}
    \label{eq:level_curve_l0_characterization}
  \end{equation}
  \label{pr:assumptions_pseudonormlzero_conjugate}
\end{proposition}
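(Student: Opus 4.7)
My plan is to reduce both chains in~\eqref{eq:level_curve_l0_characterization} to the single chain for the generalized top-$k$ dual~norms $\TopDualNorm{\TripleNorm{\cdot}}{k}$, and then to handle the tail $k \geq l$ (equality, via orthant-monotonicity) and the head $k \leq l$ (strict inequality, via orthant-strict-monotonicity) separately.

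First I would observe that orthant-strict-monotonicity of $\TripleNormDual{\cdot}$ implies its orthant-monotonicity, and then, applying Proposition~\ref{pr:orthant-monotonic} with source and dual norms exchanged (so that $\TripleNorm{\cdot}$ plays the role of the dual of $\TripleNormDual{\cdot}$), I deduce orthant-monotonicity of $\TripleNorm{\cdot}$. Proposition~\ref{pr:dual_coordinate-k_norm_=_generalized_top-k_norm} then gives $\CoordinateNormDual{\TripleNorm{\cdot}}{k} = \TopDualNorm{\TripleNorm{\cdot}}{k}$ for every $k \in \ic{1,d}$, so it is enough to establish the second chain.

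For the tail $k \in \ic{l,d}$, the witness $K = \Support{\dual}$ satisfies $\dual_K = \dual$ and $|K| = l \leq k$, yielding $\TopDualNorm{\TripleNorm{\dual}}{k} \geq \TripleNormDual{\dual}$; conversely, for any $K$ with $|K| \leq k$ one has $|\dual_K| \leq |\dual|$ and $\dual_K \circ \dual \geq 0$, so orthant-monotonicity of $\TripleNormDual{\cdot}$ gives $\TripleNormDual{\dual_K} \leq \TripleNormDual{\dual}$, providing the matching upper bound after taking the supremum.

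For the head $k \in \ic{2,l}$, I would pick $K^*$ with $|K^*| \leq k-1$ attaining the supremum defining $\TopDualNorm{\TripleNorm{\dual}}{k-1}$, and observe that replacing $K^*$ by $K^* \cap \Support{\dual}$ leaves $\dual_{K^*}$ unchanged, so I may assume $K^* \subset \Support{\dual}$. Since $|K^*| \leq k-1 \leq l-1 < l = |\Support{\dual}|$, there exists $j \in \Support{\dual} \setminus K^*$; setting $K' = K^* \cup \{j\}$ gives $|K'| \leq k$, $\dual_{K'} \circ \dual_{K^*} \geq 0$, and $|\dual_{K'}| > |\dual_{K^*}|$ with the strict inequality holding at coordinate $j$ because $\dual_j \neq 0$. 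Orthant-strict-monotonicity of $\TripleNormDual{\cdot}$ then yields $\TripleNormDual{\dual_{K'}} > \TripleNormDual{\dual_{K^*}}$, and a fortiori $\TopDualNorm{\TripleNorm{\dual}}{k} > \TopDualNorm{\TripleNorm{\dual}}{k-1}$. The main obstacle will be precisely this reduction of the maximizer into $\Support{\dual}$: it is what makes the enlargement by a support index strictly grow the vector of absolute values, and thereby triggers the strict monotonicity of $\TripleNormDual{\cdot}$; without it, enlarging $K^*$ outside the support would leave $\dual_{K'} = \dual_{K^*}$ and the strict increase would fail.
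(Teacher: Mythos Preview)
Your proposal is correct and follows essentially the same route as the paper: reduce to the $\TopDualNorm{\TripleNorm{\cdot}}{k}$ chain via Propositions~\ref{pr:orthant-monotonic} and~\ref{pr:dual_coordinate-k_norm_=_generalized_top-k_norm}, use the witness $K=\Support{\dual}$ for the tail equalities, and for the head restrict the maximizing $K$ into $\Support{\dual}$ and enlarge by one support index to trigger the strict increase. The only cosmetic differences are that the paper invokes an external monotonicity chain and Item~\ref{it:SICS} of Proposition~\ref{pr:orthant-strictly_monotonic}, whereas you argue both the nondecreasing tail bound and the strict head increase directly from the definition of orthant-(strict-)monotonicity, which makes your version slightly more self-contained.
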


\begin{proof}
  We consider \( \dual\in\RR^d \).
  We put \( L=\Support{\dual} \) and we suppose that 
  $\lzero(\dual)=\cardinal{L} = l $.
  
  Since the norm $\TripleNormDual{\cdot}$ is orthant-strictly monotonic,
  it is orthant-monotonic and so is $\TripleNorm{\cdot}$
  by Proposition~\ref{pr:orthant-monotonic}.
  By~\eqref{eq:dual_coordinate-k_norm_=_generalized_top-k_norm} in
  Proposition~\ref{pr:dual_coordinate-k_norm_=_generalized_top-k_norm}, we get that
  \( \CoordinateNorm{\TripleNorm{\cdot}}{\LocalIndex} 
  =
  \SupportDualNorm{\TripleNorm{\cdot}}{\LocalIndex} \) 
  and
  \(      \CoordinateNormDual{\TripleNorm{\cdot}}{\LocalIndex}
  =
  \TopDualNorm{\TripleNorm{\cdot}}{\LocalIndex} \),
  for \( \LocalIndex\in\ic{0,d} \) 
  (with the convention that these are the null seminorms in the case~\( \LocalIndex=0 \)).
  Therefore, we can translate all the results, obtained in
  \cite{Chancelier-DeLara:2020_CAPRA_OPTIMIZATION},
  with coordinate-$k$ and dual coordinate-$k$ norms,
  into results regarding
  generalized top-$k$ and $k$-support dual~norms.
  As an application, by \cite[Equation~(18)]{Chancelier-DeLara:2020_CAPRA_OPTIMIZATION},
  we get, from $\lzero(\dual)=l$, that
  \begin{equation}
    \TopDualNorm{\TripleNorm{\dual}}{1} \leq \cdots \leq 
    \TopDualNorm{\TripleNorm{\dual}}{\LocalIndex} \leq
    \TopDualNorm{\TripleNorm{\dual}}{\LocalIndex+1} \leq \cdots \leq 
    \TopDualNorm{\TripleNorm{\dual}}{d} =\TripleNormDual{\dual}
    \eqsepv \forall \dual \in \RR^d
    \eqfinp
    \label{eq:top_dual_norm_inequalities}
  \end{equation}
  % \begin{equation}
  %   \CoordinateNormDual{\TripleNorm{\dual}}{1} \leq \cdots \leq 
  %   \CoordinateNormDual{\TripleNorm{\dual}}{\LocalIndex} \leq 
  %   \CoordinateNormDual{\TripleNorm{\dual}}{\LocalIndex+1} \leq \cdots \leq 
  %   \CoordinateNormDual{\TripleNorm{\dual}}{d} 
  %   = \TripleNormDual{\dual} 
  %   \eqsepv \forall \dual \in \RR^d
  %   \eqfinp
  %   \label{eq:dual_coordinate_norm_inequalities}
  % \end{equation}
  We now prove~\eqref{eq:level_curve_l0_characterization} in two steps. 
  \medskip

  % \noindent $\bullet$
  We first show that \( \TopDualNorm{\TripleNorm{\dual}}{l} =
  \cdots = \TopDualNorm{\TripleNorm{\dual}}{d} =\TripleNormDual{\dual} \)
  (the right hand side of~\eqref{eq:level_curve_l0_characterization}).
  Since \( \dual=\dual_L \), by definition of the set~$L=\Support{\dual}$, we
  have that 
  \(
  \TripleNormDual{\dual} = \TripleNormDual{\dual_L} \leq
  \sup_{\cardinal{K} \leq l} \TripleNormDual{\dual_K} = \TopDualNorm{\TripleNorm{\dual}}{l}
  \)
  by the very definition~\eqref{eq:top_dual_norm}
  of the generalized top-$l$ dual~norm \( \TopDualNorm{\TripleNorm{\cdot}}{l} \).
  By~\eqref{eq:top_dual_norm_inequalities}, we conclude that
  \( \TopDualNorm{\TripleNorm{\dual}}{l} =
  \cdots = \TopDualNorm{\TripleNorm{\dual}}{d} =\TripleNormDual{\dual} \).
  \medskip

  Second, we show that 
  \( \TopDualNorm{\TripleNorm{\dual}}{1} < \cdots < % \leq \cdots \leq 
  \TopDualNorm{\TripleNorm{\dual}}{l-1} < \TopDualNorm{\TripleNorm{\dual}}{l} \)
  (the left hand side of~\eqref{eq:level_curve_l0_characterization}).
  There is nothing to show for~$l=0$.
  Now, for \( l \geq 1 \) and for any \( k \in \ic{0,l-1} \), we have
  \begin{align*}
    \TopDualNorm{\TripleNorm{\dual}}{k}  
    &=
      \sup_{\cardinal{K} \leq k} 
      \TripleNormDual{\dual_{K}}
      \tag{by definition~\eqref{eq:top_dual_norm} of the generalized top-$k$ dual~norm } 
    \\
    &=
      \sup_{\cardinal{K} \leq k} 
      \TripleNormDual{\dual_{K \cap L}}
      \tag{because \( \dual_L=\dual \) by definition of the set~$L=\Support{\dual}$ }
    \\
    &=
      \sup_{\cardinal{K'} \leq k, K' \subset L} 
      \TripleNormDual{\dual_{K'}}
      \tag{by setting \( K'=K \cap L \) }
    \\
    &=
      \sup_{\cardinal{K} \leq k, K \subset L} 
      \TripleNormDual{\dual_{K}}
      \tag{the same but with notation $K$ instead of $K'$}
    \\
    &=
      \sup_{\cardinal{K} \leq k, K \subsetneq L } 
      \TripleNormDual{\dual_{K}}
      \tag{because \( \cardinal{K} \leq k \leq l-1 < l =\cardinal{L} \)
      implies that \( K \neq L \)}
    \\
    &< 
      \sup_{\substack{\cardinal{K} \leq k, \LocalIndex \in L\setminus K \\ K \subsetneq L} } 
    \TripleNormDual{\dual_{K \cup \na{\LocalIndex}}}
    \intertext{%
    because the set \( L \setminus K \) is nonempty
    (having cardinality 
    \( \cardinal{L}-\cardinal{K}=l-\cardinal{K} \geq k+1-\cardinal{K} \geq 1 \)),
    and because, since the norm $\TripleNormDual{\cdot}$ is orthant-strictly monotonic, 
    using Item~\ref{it:SICS} in Proposition~\ref{pr:orthant-strictly_monotonic},
    we obtain that \( \TripleNormDual{\dual_{K}} < \TripleNormDual{\dual_{K \cup \na{\LocalIndex}}} \) 
    as \( \dual_{K} \neq \dual_{K \cup \na{\LocalIndex}} \) for at least one
    \( \LocalIndex \in L\setminus K \) since  \( L=\Support{\dual} \)}
    & \leq
      \sup_{\cardinal{J} \leq k+1, J \subset L } 
      \TripleNormDual{\dual_{J}}
      \tag{as all the subsets $K'=K \cup \na{\LocalIndex}$ are such that $K' \subset L $ and 
      \( \cardinal{K'}=k+1 \)}
    \\
    & \leq 
      \TopDualNorm{\TripleNorm{\dual}}{k+1}  
  \end{align*}
  by definition~\eqref{eq:top_dual_norm} of the generalized top-($k+1$) dual~norm
  (in fact the last inequality is easily shown to be an equality as
  \( \dual_L=\dual \)).
  Thus, for any \( k \in \ic{0,l-1} \), we have established that 
  \( \TopDualNorm{\TripleNorm{\dual}}{k} < \TopDualNorm{\TripleNorm{\dual}}{k+1} \). 
  \medskip

  This ends the proof.
\end{proof}

\section{Proposition~\ref{pr:pseudonormlzero_biconjugate_varphi}} 
\label{Appendix:Proposition}

We reproduce here~\cite[Proposition~4.5]{Chancelier-DeLara:2020_CAPRA_OPTIMIZATION}
in order to simplify the reading of the proof of Proposition~\ref{pr:calL0}.

\begin{proposition}(\cite[Proposition~4.5]{Chancelier-DeLara:2020_CAPRA_OPTIMIZATION})
  Let $\TripleNorm{\cdot}$ be a norm on~$\RR^d$,
  with associated sequence
  \( \sequence{\CoordinateNorm{\TripleNorm{\cdot}}{\LocalIndex}}{\LocalIndex\in\ic{1,d}} \)
  of coordinate-$k$ norms and sequence
  \( \sequence{\CoordinateNormDual{\TripleNorm{\cdot}}{\LocalIndex}}{\LocalIndex\in\ic{1,d}} \)
  % \( \sequence{\CoordinateNorm{\TripleNormDual{\cdot}}{\LocalIndex}}{\LocalIndex\in\ic{1,d}} \)
  of dual coordinate-$k$ norms, as 
  in Definition~\ref{de:coordinate_norm},
  and with
  associated \Capra\ coupling $\CouplingCapra$ in~\eqref{eq:coupling_CAPRA}. 

  \begin{enumerate}
  \item 
    For any function \( \varphi : \ic{0,d} \to \barRR \), we have
    \begin{subequations}
      \begin{align}
        \SFMbi{ \np{ \varphi \circ \lzero} }{\CouplingCapra}\np{\primal}
        &=
          \LFMr{ \bp{ \SFM{\np{ \varphi \circ \lzero}}{\CouplingCapra} } }
          \np{ \frac{\primal}{\TripleNorm{\primal}} }
          \eqsepv \forall \primal \in \RR^d\backslash\{0\}
          \label{CAPRA-eq:Biconjugate_ofvarphi_of_lzero}
          \eqfinv 
          \intertext{where the closed convex function 
          \( \LFMr{ \bp{ \SFM{\np{ \varphi \circ \lzero}}{\CouplingCapra} } } \) 
          has the following expression as a Fenchel conjugate}
          \LFMr{ \bp{ \SFM{\np{ \varphi \circ \lzero}}{\CouplingCapra} } }
        &=
          \LFMr{ \Bp{ 
          \sup_{\LocalIndex\in\ic{0,d}} \bc{
          \CoordinateNormDual{\TripleNorm{\cdot}}{\LocalIndex} -\varphi\np{\LocalIndex} } } }
          \eqfinv 
          \label{CAPRA-eq:caprastarprim-of-sup-norms}
          \intertext{and also
          has the following four expressions as a Fenchel biconjugate}
        &=
          \LFMbi{ \Bp{ \inf_{\LocalIndex\in\ic{0,d}} \bc{ 
          \delta_{  \CoordinateNorm{\TripleNormBall}{\LocalIndex} } \UppPlus \varphi\np{\LocalIndex} } } }
          \eqfinv 
          \label{CAPRA-eq:Biconjugate_of_min_balls_ind}
          \intertext{hence the function~\( \LFMr{ \bp{ \SFM{\np{ \varphi \circ \lzero}}{\CouplingCapra} } } \)
          is the largest closed convex function
          below the integer valued function 
          \( \inf_{\LocalIndex\in\ic{0,d}} \bc{ 
          \delta_{  \CoordinateNorm{\TripleNormBall}{\LocalIndex} } \UppPlus \varphi\np{\LocalIndex} } \),
        % which is 
          such that % \footnote{%
          % The construction makes sense as
          % \(  \CoordinateNorm{\TripleNormBall}{1}
          % \subset \cdots \subset
          % \CoordinateNorm{\TripleNormBall}{\LocalIndex-1} \subset \CoordinateNorm{\TripleNormBall}{\LocalIndex} 
          % \subset \cdots \subset \CoordinateNorm{\TripleNormBall}{d} = \BALL 
          % \)
          % by~\eqref{eq:dual_coordinate_norm_unit-balls_inclusions}.}
          \( \primal \in \CoordinateNorm{\TripleNormBall}{\LocalIndex} 
          \backslash \CoordinateNorm{\TripleNormBall}{\LocalIndex-1} \mapsto \varphi\np{\LocalIndex} \) 
          for $l\in\ic{1,d}$,
          and $\primal \in \CoordinateNorm{\TripleNormBall}{0} = \{0\} \mapsto \varphi\np{0}$, the function
          being infinite outside~\( \CoordinateNorm{\TripleNormBall}{d}=
          \TripleNormBall \), that is, with the convention that \( \CoordinateNorm{\TripleNormBall}{0}=\{0\} \)
          and that \( \inf \emptyset = +\infty \)}
        &= 
          \LFMbi{ \Bp{ \primal \mapsto \inf \bset{ \varphi\np{\LocalIndex} }%
          { \primal \in \CoordinateNorm{\TripleNormBall}{\LocalIndex}
          \eqsepv \LocalIndex \in \ic{0,d} } } }
          \eqfinv 
          \label{CAPRA-eq:Biconjugate_of_min_balls_ind_bis}
        \\
        &=
          \LFMbi{ \Bp{ \inf_{\LocalIndex\in\ic{0,d}} \bc{ 
          \delta_{ \CoordinateNorm{\TripleNormSphere}{\LocalIndex} } \UppPlus \varphi\np{\LocalIndex} } } }
          \eqfinv 
          \label{CAPRA-eq:Biconjugate_of_min_spheres_ind}
          \intertext{hence the function~\( \LFMr{ \bp{ \SFM{\np{ \varphi \circ \lzero}}{\CouplingCapra} } } \)
          is the largest closed convex function
          below the integer valued function 
          \( \inf_{\LocalIndex\in\ic{0,d}} \bc{ 
          \delta_{  \CoordinateNorm{\TripleNormSphere}{\LocalIndex} } \UppPlus
          \varphi\np{\LocalIndex} } \), that is,
          with the convention that \( \CoordinateNorm{\TripleNormSphere}{0}=\{0\} \)
          and that \( \inf \emptyset = +\infty \)}
        &= 
          \LFMbi{ \Bp{\primal \mapsto \inf \bset{ \varphi\np{\LocalIndex} }%
          { \primal \in \CoordinateNorm{\TripleNormSphere}{\LocalIndex}
          \eqsepv \LocalIndex \in \ic{0,d} } } }
          \eqfinp 
          \label{CAPRA-eq:Biconjugate_of_min_spheres_ind_bis}
      \end{align}
    \item 
      For any function \( \varphi : \ic{0,d} \to \RR \), 
      that is, with finite values, the function 
      \( \LFMr{ \bp{ \SFM{\np{ \varphi \circ \lzero}}{\CouplingCapra} } } \) 
      is proper convex lsc and 
      has the following variational expression
      (where \( \Delta_{d+1} \) denotes the simplex of~$\RR^{d+1}$)
      \begin{align}
        \LFMr{ \bp{ \SFM{\np{ \varphi \circ \lzero}}{\CouplingCapra} } }\np{\primal}
        &= 
          \min_{ \substack{%
          \np{\lambda_0,\lambda_1,\ldots,\lambda_d} \in \Delta_{d+1} 
        \\
        \primal \in \sum_{ \LocalIndex=1 }^{ d } \lambda_{\LocalIndex} \CoordinateNorm{\TripleNormBall}{\LocalIndex} 
        } } 
        \sum_{ \LocalIndex=0}^{ d } \lambda_{\LocalIndex}
        \varphi\np{\LocalIndex} 
        \eqsepv \forall  \primal \in \RR^d 
        \eqfinp
        \label{CAPRA-eq:biconjugate_with_balls}
      \end{align}
    \item
      \label{it:biconjugate_l0norm_varphi}
      For any function \( \varphi : \ic{0,d} \to \RR_+ \), 
      that is, with nonnegative finite values,
      and such that \( \varphi\np{0}=0 \), the function 
      \( \LFMr{ \bp{ \SFM{\np{ \varphi \circ \lzero}}{\CouplingCapra} } } \) 
      is proper convex lsc and 
      has the following two variational expressions
      (notice that, in~\eqref{CAPRA-eq:biconjugate_with_balls}, the sum starts from \( \LocalIndex=0 \),
        whereas in~\eqref{CAPRA-eq:biconjugate_with_spheres}
        and in~\eqref{CAPRA-eq:pseudonormlzero_convex_minimum}, 
        the sum starts from \( \LocalIndex=1 \)) 
      \begin{align}
        \LFMr{ \bp{ \SFM{\np{ \varphi \circ \lzero}}{\CouplingCapra} } }\np{\primal}
        &= 
          \min_{ \substack{%
          \np{\lambda_0,\lambda_1,\ldots,\lambda_d} \in \Delta_{d+1} 
        \\
        \primal \in \sum_{ \LocalIndex=1 }^{ d } \lambda_{\LocalIndex} \CoordinateNorm{\TripleNormSphere}{\LocalIndex} 
        } } 
        \sum_{ \LocalIndex=1 }^{ d } \lambda_{\LocalIndex} \varphi\np{\LocalIndex} 
        \eqsepv \forall \primal \in \RR^d
        \eqfinv
        \label{CAPRA-eq:biconjugate_with_spheres}
        \\
        &= 
          \min_{ \substack{%
          z^{(1)} \in \RR^d, \ldots, z^{(d)} \in \RR^d 
        \\
        \sum_{ \LocalIndex=1 }^{ d } \CoordinateNorm{\TripleNorm{z^{(\LocalIndex)}}}{\LocalIndex} \leq 1
        \\
        \sum_{ \LocalIndex=1 }^{ d } z^{(\LocalIndex)} = \primal } }
        \sum_{ \LocalIndex=1 }^{ d } \varphi\np{\LocalIndex}
        \CoordinateNorm{\TripleNorm{z^{(\LocalIndex)}}}{\LocalIndex} 
        \eqsepv \forall \primal \in \RR^d
        \eqfinv
        \label{CAPRA-eq:pseudonormlzero_convex_minimum}
      \end{align}
      % \label{eq:Fenchel_of_CAPRA_conjugate_l0norm_varphi}
      % 
    \end{subequations}
    and the function \( \SFMbi{ \np{ \varphi \circ \lzero} }{\CouplingCapra} \)
    has the following variational expression
    \begin{equation}
      \SFMbi{ \np{ \varphi \circ \lzero} }{\CouplingCapra}\np{\primal}
      =
      \frac{ 1 }{ \TripleNorm{\primal} } 
      \min_{ \substack{%
          z^{(1)} \in \RR^d, \ldots, z^{(d)} \in \RR^d 
          \\
          \sum_{ \LocalIndex=1 }^{ d } \CoordinateNorm{\TripleNorm{z^{(\LocalIndex)}}}{\LocalIndex} \leq \TripleNorm{\primal}
          \\
          \sum_{ \LocalIndex=1 }^{ d } z^{(\LocalIndex)} = \primal } }
      \sum_{ \LocalIndex=1 }^{ d }
      \CoordinateNorm{\TripleNorm{z^{(\LocalIndex)}}}{\LocalIndex} \varphi\np{\LocalIndex} 
      \eqsepv \forall \primal \in \RR^d\backslash\{0\} 
      \eqfinp
      \label{CAPRA-eq:biconjugate_l0norm_varphi}
    \end{equation}
  \end{enumerate}
  \label{pr:pseudonormlzero_biconjugate_varphi}
\end{proposition}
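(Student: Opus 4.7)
The plan is to build everything on the single observation that the \capra\ coupling \emph{factorizes through the normalization map}, namely $\CouplingCapra\np{\primal,\dual}=\nscal{\normalized\np{\primal}}{\dual}$ whenever $\primal\neq 0$. Inserting this identity into the biconjugate formula~\eqref{eq:Fenchel-Moreau_biconjugate} turns the supremum over $\dual$ into a standard Fenchel conjugate evaluated at $\normalized\np{\primal}$, which is exactly~\eqref{CAPRA-eq:Biconjugate_ofvarphi_of_lzero}. The same factorization writes \( \SFM{\np{\varphi\circ\lzero}}{\CouplingCapra} \) as a supremum of affine functions of $\dual$, so it is convex lsc and applying $\LFMr{}$ later yields Fenchel biconjugates that equal the closed convex envelopes of the explicit functions we produce.

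Next I would compute the \capra-conjugate \( \SFM{\np{\varphi\circ\lzero}}{\CouplingCapra}\np{\dual} \) by splitting the sup over $\primal$ according to the level $l=\lzero\np{\primal}\in\ic{0,d}$. Using $0$-homogeneity of $\lzero$ and the definition~\eqref{eq:dual_coordinate_norm_definition}, one has $\sup_{\lzero\np{\primal}\le l,\primal\neq 0}\CouplingCapra\np{\primal,\dual}=\CoordinateNormDual{\TripleNorm{\dual}}{l}$. The slightly delicate point is that the sup over the level set $\{\lzero\np{\primal}=l\}$ gives the same value as over $\{\lzero\np{\primal}\le l\}$, which I would establish by a small perturbation argument: turn on extra coordinates of amplitude $\varepsilon$ and send $\varepsilon\to 0$, using continuity of $\CouplingCapra\np{\cdot,\dual}$ away from the origin. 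Together these yield \( \SFM{\np{\varphi\circ\lzero}}{\CouplingCapra}=\sup_{l\in\ic{0,d}}\bc{\CoordinateNormDual{\TripleNorm{\cdot}}{l}-\varphi\np{l}} \), and applying $\LFMr{}$ gives~\eqref{CAPRA-eq:caprastarprim-of-sup-norms}.

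To obtain the ball form~\eqref{CAPRA-eq:Biconjugate_of_min_balls_ind}--\eqref{CAPRA-eq:Biconjugate_of_min_balls_ind_bis} I would use two standard Fenchel identities: the conjugate of an infimum is the supremum of the conjugates, and the Fenchel conjugate of the indicator of the unit ball of a norm is precisely the dual norm. Since \( \CoordinateNormDual{\TripleNorm{\cdot}}{l} \) is the support function of \( \CoordinateNorm{\TripleNormBall}{l} \), this gives
\[
  \LFM{\Bp{\inf_{l\in\ic{0,d}}\bc{\delta_{\CoordinateNorm{\TripleNormBall}{l}}\UppPlus\varphi\np{l}}}}
  =\sup_{l\in\ic{0,d}}\bc{\CoordinateNormDual{\TripleNorm{\cdot}}{l}-\varphi\np{l}},
\]
and taking $\LFMr{}$ matches~\eqref{CAPRA-eq:caprastarprim-of-sup-norms}. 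The sphere variant~\eqref{CAPRA-eq:Biconjugate_of_min_spheres_ind}--\eqref{CAPRA-eq:Biconjugate_of_min_spheres_ind_bis} follows because the closed convex hull of \( \CoordinateNorm{\TripleNormSphere}{l}\cup\{0\} \) is \( \CoordinateNorm{\TripleNormBall}{l} \); under the extra assumption $\varphi\ge 0$ with $\varphi\np{0}=0$ used in Item~\ref{it:biconjugate_l0norm_varphi}, the origin cannot lower the envelope, so the two integer-valued functions share the same Fenchel biconjugate.

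The variational expressions~\eqref{CAPRA-eq:biconjugate_with_balls}--\eqref{CAPRA-eq:pseudonormlzero_convex_minimum} come from the classical representation of the closed convex envelope via Carath\'eodory, \( \LFMbi{f}\np{\primal}=\inf\bset{\sum_i\lambda_i f\np{\primal_i}}{\sum_i\lambda_i\primal_i=\primal,\ \sum_i\lambda_i=1,\ \lambda_i\ge 0} \), specialized to $f=\inf_{l}\bc{\delta_{\CoordinateNorm{\TripleNormBall}{l}}+\varphi\np{l}}$. Grouping the decomposition by the chosen level $l_i$ yields~\eqref{CAPRA-eq:biconjugate_with_balls} directly. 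The change of variable $z^{(l)}=\lambda_l\primal^{(l)}$ then converts the convex combination with $\primal^{(l)}\in\CoordinateNorm{\TripleNormBall}{l}$ into~\eqref{CAPRA-eq:pseudonormlzero_convex_minimum}, using $1$-homogeneity of each coordinate-$l$ norm to identify $\lambda_l$ with $\CoordinateNorm{\TripleNorm{z^{(l)}}}{l}$ up to slack, and exploiting $\varphi\np{0}=0$ to discard the $l=0$ term. The main obstacle I anticipate is precisely this bookkeeping: showing that the infimum is attained (justifying ``$\min$''), that $\sum_{l=1}^d\CoordinateNorm{\TripleNorm{z^{(l)}}}{l}\le 1$ is the correct joint reformulation of the simplex and ball constraints, and that the ball-to-sphere passage preserves the value thanks to $\varphi\ge 0$. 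Equation~\eqref{CAPRA-eq:biconjugate_l0norm_varphi} is then obtained by composing~\eqref{CAPRA-eq:Biconjugate_ofvarphi_of_lzero} with~\eqref{CAPRA-eq:pseudonormlzero_convex_minimum} and a single rescaling by $\TripleNorm{\primal}$.
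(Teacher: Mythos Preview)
The paper does not prove this proposition: Appendix~\ref{Appendix:Proposition} merely reproduces the \emph{statement} of \cite[Proposition~4.5]{Chancelier-DeLara:2020_CAPRA_OPTIMIZATION} without proof, so there is no proof in the present paper to compare against. Your outline is the natural route and is essentially correct: the factorization $\CouplingCapra\np{\primal,\dual}=\nscal{\normalized\np{\primal}}{\dual}$ gives~\eqref{CAPRA-eq:Biconjugate_ofvarphi_of_lzero}; stratifying by $\lzero$ and using the definition~\eqref{eq:dual_coordinate_norm_definition} gives~\eqref{CAPRA-eq:caprastarprim-of-sup-norms}; the duality $\sigma_{\CoordinateNorm{\TripleNormBall}{l}}=\CoordinateNormDual{\TripleNorm{\cdot}}{l}$ together with $\LFM{\np{\inf_l g_l}}=\sup_l\LFM{g_l}$ gives~\eqref{CAPRA-eq:Biconjugate_of_min_balls_ind}; and Carath\'eodory plus compactness of the balls handles Items~2--3.

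One correction: you claim the sphere form~\eqref{CAPRA-eq:Biconjugate_of_min_spheres_ind} needs the extra hypotheses $\varphi\ge 0$, $\varphi\np{0}=0$ from Item~\ref{it:biconjugate_l0norm_varphi}. It does not---the sphere identities sit in Item~1, valid for \emph{any} $\varphi:\ic{0,d}\to\barRR$. The clean argument is that the unit sphere~$\CoordinateNorm{\TripleNormSphere}{l}$ and the unit ball~$\CoordinateNorm{\TripleNormBall}{l}$ have the \emph{same support function} (the ball is the closed convex hull of the sphere, and the sphere is symmetric so its support function is already the dual norm). Hence $\LFM{\bp{\delta_{\CoordinateNorm{\TripleNormSphere}{l}}\UppPlus\varphi\np{l}}}=\LFM{\bp{\delta_{\CoordinateNorm{\TripleNormBall}{l}}\UppPlus\varphi\np{l}}}$ for every~$l$, and the biconjugates coincide with no condition on~$\varphi$. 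Your ``origin cannot lower the envelope'' reasoning is unnecessary here and would leave a gap in Item~1 if you relied on it.
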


\section{Background on the Fenchel conjugacy on~$\RR^d$}
\label{The_Fenchel_conjugacy}

We review concepts and notations related to the Fenchel conjugacy
(we refer the reader to \cite{Rockafellar:1974}).
For any function \( \fonctionuncertain : \RR^d \to \barRR \),
its \emph{epigraph} is \( \epigraph\fonctionuncertain= 
\defset{ \np{\uncertain,t}\in\RR^d\times\RR}%
{\fonctionuncertain\np{\uncertain} \leq t} \),
its \emph{effective domain} is 
\( \dom\fonctionuncertain= 
\defset{\uncertain\in\RR^d}{ \fonctionuncertain\np{\uncertain} <+\infty}
\).
A function \( \fonctionuncertain : \RR^d \to \barRR \)
is said to be
\emph{convex} if its epigraph is a convex set,
\emph{proper} if it never takes the value~$-\infty$
and that \( \dom\fonctionuncertain \not = \emptyset \),
\emph{lower semi continuous (lsc)} if its epigraph is closed,
\emph{closed} if it either lsc and nowhere having the value $-\infty$,
or is the constant function~$-\infty$ \cite[p.~15]{Rockafellar:1974}.
Closed convex functions are the two constant functions~$-\infty$ and~$+\infty$
united with all proper convex lsc functions.
  In particular, any closed convex function that takes at least one finite value
  is necessarily proper convex~lsc. 

For any functions \( \fonctionprimal : \RR^d  \to \barRR \)
and \( \fonctiondual : \RR^d \to \barRR \), 
we denote
\begin{subequations}
  \begin{align}
    \LFM{\fonctionprimal}\np{\dual} 
    &= 
      \sup_{\primal \in \RR^d} \Bp{ \nscal{\primal}{\dual} 
      \LowPlus \bp{ -\fonctionprimal\np{\primal} } } 
      \eqsepv \forall \dual \in \RR^d
      \eqfinv
      \label{eq:Fenchel_conjugate}
    \\
    \LFMr{\fonctiondual}\np{\primal} 
    &= 
      \sup_{ \dual \in \DUAL } \Bp{ \nscal{\primal}{\dual} 
      \LowPlus \bp{ -\fonctiondual\np{\dual} } } 
      \eqsepv \forall \primal \in \RR^d
      \eqfinv
      \label{eq:Fenchel_conjugate_reverse}
    \\
    \LFMbi{\fonctionprimal}\np{\primal} 
    &= 
      \sup_{\dual \in \RR^d} \Bp{ \nscal{\primal}{\dual} 
      \LowPlus \bp{ -\LFM{\fonctionprimal}\np{\dual} } } 
      \eqsepv \forall \primal \in \RR^d
      \eqfinp
      \label{eq:Fenchel_biconjugate}
  \end{align}
\end{subequations}
  In convex analysis, one does not use
  the notation~\( \LFMr{} \)
  in~\eqref{eq:Fenchel_conjugate_reverse} and~\( \LFMbi{} \) in~\eqref{eq:Fenchel_biconjugate},
  but simply~\( \LFM{} \) and~\( ^{\star\star} \).
  We use~\( \LFMr{} \) and~\( \LFMbi{} \) to be consistent with the
  notation~\eqref{eq:Fenchel-Moreau_biconjugate} for general conjugacies.

It is proved that the Fenchel conjugacy
(indifferently 
\( \fonctionprimal \mapsto \LFM{\fonctionprimal} \)
or
\( \fonctiondual \mapsto \LFMr{\fonctiondual} \))
induces a one-to-one correspondence
between the closed convex functions on~$\RR^d$ and themselves
\cite[Theorem~5]{Rockafellar:1974}.

In \cite[p.~214-215]{Rockafellar:1970}
(see also the historical note in \cite[p.~343]{Rockafellar-Wets:1998}),
the notions of (Moreau) subgradient
and of (Rockafellar) subdifferential are defined for a convex function.
Following the definition of the subdifferential
of a function with respect to a duality 
in \cite{Akian-Gaubert-Kolokoltsov:2002},
we define the \emph{(Rockafellar-Moreau) subdifferential} \( \subdifferential{}{\fonctionprimal}\np{\primal} \)
of a function \( \fonctionprimal : \RR^d  \to \barRR \)
at~\( \primal \in \RR^d \) by
\begin{subequations}
  \begin{equation}
    \subdifferential{}{\fonctionprimal}\np{\primal}
    =    \defset{ \dual \in \RR^d }{ %
      \LFM{\fonctionprimal}\np{\dual} 
      = \nscal{\primal}{\dual} 
      \LowPlus \bp{ -\fonctionprimal\np{\primal} } }
    \eqfinp
    \label{eq:Rockafellar-Moreau-subdifferential_a}
  \end{equation}
  When the function~\( \fonctionprimal \) is proper convex and
  \( \primal \in  \dom\fonctionprimal \), we recover the classic definition that
  \begin{equation}
    \subdifferential{}{\fonctionprimal}\np{\primal}
    =    \defset{ \dual \in \RR^d }{ %
      \nscal{\primal'-\primal}{\dual}+\fonctionprimal\np{\primal}
      \leq \fonctionprimal\np{\primal'}
      % \nscal{\primal'}{\dual} -\fonctionprimal\np{\primal'}
      % \leq
      % \nscal{\primal}{\dual} -\fonctionprimal\np{\primal}
      \eqsepv \forall \primal' \in \dom\fonctionprimal }
    \eqfinp
    \label{eq:Rockafellar-Moreau-subdifferential_b}
  \end{equation}  
\end{subequations}

\newcommand{\noopsort}[1]{} \ifx\undefined\allcaps\def\allcaps#1{#1}\fi

\end{document}